\documentclass[11pt]{article}
\usepackage[utf8]{inputenc}
\usepackage[T1]{fontenc}
\usepackage[english]{babel}
\usepackage{microtype}
\usepackage[margin=1in]{geometry}

\usepackage{amsmath,amssymb,amsthm,mathrsfs,mathtools}
\usepackage[dvipsnames]{xcolor}
\usepackage{booktabs,longtable,array,tabularx,multirow}
\usepackage{tikz,pgfplots}
\pgfplotsset{compat=1.18}
\usetikzlibrary{arrows.meta,positioning,decorations.pathreplacing,calligraphy,calc,fit}

\usepackage{enumitem}
\usepackage{adjustbox}
\usepackage{graphicx}
\usepackage{subcaption}
\usepackage{float}
\allowdisplaybreaks
\setlist[enumerate,1]{label={\rm(\roman*)}}
\setlist[enumerate,2]{label={\rm(\alph*)}}
\setlist[enumerate,3]{label={\rm(\arabic*)}}
\usepackage[hypertexnames=false]{hyperref}
\hypersetup{
    colorlinks,
    linkcolor={red!60!black},
    citecolor={green!60!black},
    urlcolor={blue!60!black}
}
\newcommand{\Z}{\mathbb{Z}}
\newcommand{\Q}{\mathbb{Q}}

\newcommand{\Sym}{\mathfrak{S}}
\newcommand{\Canon}{\mathcal{C}}
\newcommand{\Lat}{\mathcal{L}}
\newcommand{\Dyck}{\operatorname{Dyck}}
\newcommand{\FussDyck}{\operatorname{FussDyck}}

\newcommand{\Tree}{\operatorname{Tree}}
\newcommand{\Match}{\operatorname{Match}}

\newcommand{\Des}{\operatorname{Des}}
\newcommand{\des}{\operatorname{des}}

\newcommand{\val}{\operatorname{val}}

\newcommand{\firstpos}{\operatorname{first}}
\newcommand{\lastpos}{\operatorname{last}}
\newcommand{\rev}{\operatorname{rev}}
\newcommand{\comp}{\operatorname{comp}}
\newcommand{\rc}{\operatorname{rc}}
\newcommand{\Orb}{\operatorname{Orb}}
\newcommand{\skel}{\operatorname{skel}}
\newcommand{\card}[1]{\lvert #1\rvert}

\newcommand{\Bsf}{\mathsf{B}}
\newcommand{\Qsf}{\mathsf{Q}}
\newcommand{\FC}{\operatorname{FC}}
\newcommand{\ind}{\mathbf{1}}

\newtheorem{theorem}{Theorem}[section]
\newtheorem{proposition}[theorem]{Proposition}
\newtheorem{lemma}[theorem]{Lemma}
\newtheorem{corollary}[theorem]{Corollary}
\newtheorem{conjecture}[theorem]{Conjecture}
\newtheorem{question}[theorem]{Question}

\theoremstyle{definition}
\newtheorem{definition}[theorem]{Definition}
\newtheorem{example}[theorem]{Example}

\newtheorem*{remark}{Remark}
\numberwithin{equation}{section}

\title{Avoiding patterns with three distinct letters in Canon Permutations}

\author{
Umesh Shankar\\
    Department of Computer Science and Automation,\\
    Indian Institute of Science Bengaluru,\\
    Bengaluru 560012, Karnataka, India\\
    Email: \texttt{umeshshankar@outlook.com}
}
\date{\today}

\begin{document}
\maketitle

\begin{abstract}
We study avoidance of patterns of length $3$ with three distinct letters in canon permutations. We reduce the problem to studying pattern avoidance in lattice words and show that there are $6$ such pattern avoiding classes. This shows that there are $12$ classes for the original Canon permutation pattern avoidance problem. We also give descent refinements for these classes and classify the patterns for which the descent enumeration gives palindromic and $\gamma$-positive polynomials. When the polynomials are $\gamma$-positive, we explain the $\gamma$-positivity through a group action analogous to Foata-Strehl valley hopping. Additionally, we study the avoidance of patterns in the relabelling orbit of $1213, 12112, 1231$ after a conjecture about their cardinalities by Laudone and give bijective proofs for the results.
\end{abstract}

\noindent\textbf{Keywords:} canon permutation, pattern avoidance, lattice word,
Fuss--Catalan number, Dyck path, matching, Narayana polynomial,
$\gamma$-positivity, group action, standard Young tableau.

\medskip
\noindent\textbf{2020 Mathematics Subject Classification.}
Primary: 05A05, 05A15; Secondary: 05A19, 06A07.


\section{Introduction}
Let
\[
\mathcal M_n^k=\{1^k,2^k,\ldots,n^k\}
\]
be the multiset containing $k$ copies of every letter in $[n]$.  A
\emph{canon permutation} is a permutation of $\mathcal M_n^k$ in which the
$j$th copies for every $j\in[k]$, all occur in the same relative order.
Elizalde \cite{ElizaldeNonnesting,ElizaldeCanon} introduced these objects as the natural higher-multiplicity extension
of nonnesting multipermutations and showed that their descent polynomials are connected to both the Eulerian and Narayana polynomials. Beck and Deligeorgaki \cite{BeckDeligeorgaki} subsequently interpreted canon permutations as labeled linear
extensions of the product poset $[k]\times[n]$ and explained the resulting
palindromicity and $\gamma$-positivity by $(P,\omega)$-partition and
Foata--Strehl-type methods.

While there is a vast history and literature concerning the study of Wilf-equivalence (see the monograph by Kitaev \cite{KitaevBook} on the subject), the study of pattern avoidance in non-nesting permutations was done by Elizalde and Luo \cite{ElizaldeLuo}. Recently, Laudone \cite{Laudone} initiated the systematic study of classical pattern avoidance inside canon permutations.  A central theme of that work is that
avoidance sets closed under relabeling can be transferred from canon
permutations to regular lattice words. Laudone asks for a complete classification of classes of Canon permutations avoiding patterns of length $3$ with three distinct letters.  

A canon permutation on
$\{1^k,2^k,3^k\}$ is encoded by an underlying permutation
$\sigma\in\Sym_3$ and a standard Young tableau of shape $(k,k,k)$, or,
equivalently, a three-row lattice word.  Relabeling the rows by $\sigma$
translates the forbidden set $\Lambda$ to $\sigma^{-1}\Lambda$. This lets us recast the problem into one concerning pattern avoiding lattice words. The first main theorem in our work is the following.

\begin{theorem}
\label{thm:intro-six-state}
For every $k\ge 1$ and for every $\Lambda\subseteq\Sym_3$ and every fixed underlying permutation
$\sigma\in\Sym_3$, the cardinality of
$\Canon_3^{k,\sigma}(\Lambda)$ is one of
\[
0,\qquad 1,\qquad C_k,\qquad B_k,\qquad Q_k,\qquad T_k.
\]
Here $C_k$ is the Catalan number, $T_k=f^{(k,k,k)}$ is the unrestricted rectangular-tableau number,
$B_k$ counts three-row lattice words in which every $1$ precedes every $3$,
and $Q_k$ counts $321$-avoiding three-row lattice words.  Summing the six
components gives exactly twelve distinct formulas for the sixty-four subsets of
$\Sym_3$.
\end{theorem}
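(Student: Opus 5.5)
The plan is to transport the problem to three-row lattice words, dispose of one forced pattern, and then handle each single pattern and each intersection by direct structural description, using one symmetry to halve the work.

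\textbf{Reduction to lattice words.} Fix $\sigma\in\Sym_3$. In a canon permutation with underlying permutation $\sigma$, the $j$th copies of the three letters appear in the order $\sigma(1),\sigma(2),\sigma(3)$ for every $j$. Replace the letter $\sigma(i)$ by $i$. This is a bijection from $\Canon_3^{k,\sigma}$ onto three-row lattice words of content $1^k2^k3^k$, i.e.\ onto standard Young tableaux of shape $(k,k,k)$. Since the map is a relabeling of letters, an occurrence of a pattern $p$ in the canon permutation is exactly an occurrence of the relabeled pattern in the lattice word. Hence $\lvert\Canon_3^{k,\sigma}(\Lambda)\rvert=\lvert\Lat_k(\Lambda')\rvert$ with $\Lambda'=\sigma^{-1}\Lambda$. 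It therefore suffices to show that for every $\Lambda'\subseteq\Sym_3$ the number of lattice words avoiding $\Lambda'$ lies in $\{0,1,C_k,B_k,Q_k,T_k\}$.

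\textbf{Two trivial values.} Every lattice word contains $123$: take its first letter (a $1$), the first $2$, and the first $3$, which come after it. So $123\in\Lambda'$ gives $0$. Otherwise $1^k2^k3^k$ avoids every pattern other than $123$, so the count is at least $1$, and $\Lambda'=\emptyset$ gives $T_k$.

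\textbf{Single patterns.} Because the first letter is always $1$, avoiding a pattern that ends in $2$ or $3$ and begins with $1$ reduces to a two-letter condition.
\begin{itemize}
\item Avoiding $132$ means every $2$ precedes every $3$. The lattice condition then forces all $1$s before the last $2$ is read. So the word is a Dyck word in $\{1,2\}$ followed by $3^k$, giving $C_k$.
\item Avoiding $231$ means no $1$ follows a $3$, since any $3$ is preceded by a $2$. So every $1$ precedes every $3$, giving $B_k$ by definition.
\item Avoiding $321$ gives $Q_k$ by definition.
\item For $213$ I will use reverse-complement: reverse the word and apply $i\mapsto 4-i$. This is an involution on rectangular lattice words (Schützenberger evacuation of the rectangle). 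It sends a pattern $p$ to $\rc(p)$, exchanging $132\leftrightarrow 213$ and fixing $231$, $312$, $321$. Hence $213$ also gives $C_k$.
\item For $312$, which is $\rc$-invariant, I expect the main obstacle. I would first describe such words structurally: after the first $3$, every $1$ must come after all remaining $2$s. I would then try to biject them with the words counted by $B_k$ (or by $Q_k$) via a local exchange of $1$s and $2$s, checked against small $k$.
\end{itemize}

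\textbf{Intersections and the final count.} For $\lvert\Lambda'\rvert\ge 2$ with $123\notin\Lambda'$, the structural descriptions intersect easily. Whenever $132$ or $213$ is present, the word is confined to the Catalan family $\{1,2\}$-Dyck$\cdot 3^k$ or its $\rc$-image. Inside that family each additional pattern either is automatically avoided (count $C_k$) or forces $1^k2^k3^k$ (count $1$). For instance, $132$ together with $231$ forces all $1$s, then all $2$s, then all $3$s. The remaining combinations among $\{231,312,321\}$ I would treat one by one, showing each equals $B_k$, $Q_k$, $C_k$ or $1$; the $\rc$ symmetry halves the casework. Finally, for each $\Lambda\subseteq\Sym_3$ the total count is the sum over the six $\sigma$ of these component values. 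Tabulating the resulting vectors over all $64$ subsets (using the fact that $\Lambda\mapsto\sigma^{-1}\Lambda$ permutes the subsets) and checking that $1,C_k,B_k,Q_k,T_k$ are linearly independent as sequences will give exactly twelve distinct formulas.
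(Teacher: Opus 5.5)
Your reduction to $\Lat_3^k(\sigma^{-1}\Lambda)$, the unavoidability of $123$, and your treatment of $132$, $213$, $231$ and $321$ are correct and follow the paper's argument. The proposal breaks down at $312$ and at the subsets of $\{231,312,321\}$, which is where the content of the six-value claim lies.

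\textbf{The $312$ case and its intersections.} Reverse-complement does not fix $231$ and $312$. Reversing $231$ gives $132$, and complementing gives $312$, so $\rc$ swaps them; it fixes only $123$ and $321$. That alone would give $\card{\Lat_3^k(312)}=B_k$, but equinumerosity is not enough. The counts for $\{231,312\}$ and $\{231,312,321\}$ depend on how the two classes intersect, and you leave these cases to be ``treated one by one'' with the answer still open.

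The missing idea is that the two classes coincide as sets. Suppose some $3$ precedes a later $1$. A $2$ precedes that $3$ by the lattice condition, giving $231$. Some $2$ also follows that $3$: otherwise all $k$ copies of $2$, and hence all $k$ copies of $1$, would already have appeared. This gives $312$. So
\[
\Lat_3^k(231)=\Lat_3^k(312)=\{w\in\Lat_3^k : p_{1,k}<p_{3,1}\},
\]
and in this set $321$ cannot occur. This settles all eight subsets of $\{231,312,321\}$ at once: $T_k$ for $\varnothing$, $Q_k$ for $\{321\}$, and $B_k$ otherwise.

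Your own description of $312$-avoiders already gives this in one line. A $1$ after the first $3$ with no $2$ after it would have all $k$ twos before it, contradicting the lattice condition. So no $1$ follows a $3$ at all, and no bijection with $B_k$- or $Q_k$-words is needed.

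\textbf{The $\{132,231\}$ example.} Your illustrative intersection claim is false: $\{132,231\}$ does not force $1^k2^k3^k$. In a word $v3^k$ no $1$ follows a $3$, so $231$ is automatically avoided and the count is $C_k$; for $k=2$, the word $121233$ avoids both patterns. The correct rule is that in $v3^k$ only $123$ and $213$ can occur, and in $1^kv$ only $123$ and $132$. Within the Catalan families, the only additional prohibition that matters is the other member of $\{132,213\}$, which collapses the class to $\{1^k2^k3^k\}$.

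With these two corrections, your final tabulation over the six $\sigma$ goes through. The paper simply lists the resulting component multisets; your linear-independence check of $1,C_k,B_k,Q_k,T_k$ is a reasonable way to certify that the twelve sums are distinct.
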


In particular, the four components for $\{213,312\}$ are
indexed by
\(
123,132,231,321,
\)
and each is a Dyck-path class. The descent refinement for the underlying permutation sets gives
\(
N_k(x), xN_k(x)\)
\\\( xN_k(x), x^2N_k(x),
\)
so that
\[
D_{\{213,312\},k}(x)=(1+x)^2N_k(x).
\]
The class avoiding $\{123,321\}$ instead has descent polynomial
$4xN_k(x)$ and so, we see that patterns in the same class do not give the same descent enumerating polynomials.

Next, we address palindromicity and $\gamma$-positivity.
Apart from the unrestricted class, every $\gamma$-positive example
is a Narayana class with zero, one, or two independent binary choices for the underlying permutations, or a
degenerate class with polynomial $1+x$, $1$, or $0$. For the $\gamma$-positive restricted classes,
the standard two-coloured Motzkin encoding of Dyck paths, together with the
external binary choices, gives a direct Boolean-orbit interpretation of every
$\gamma$-coefficient.

Finally, we give some enumeration results for the other classes. The sequence
$B_k$ (\href{https://oeis.org/A274969}{A274969}) begins
\[
B_k=1,4,21,121,728,\ldots
\]
and has a closed binomial formula, a cubic algebraic generating function,
and an independent interpretation by constrained pairs of binary trees \cite{BorieFalque}.  The
sequence $Q_k$ (\href{https://oeis.org/A284733}{A284733}) whose first few entries are
\[
Q_k=1,5,33,234,1706,12618,\ldots
\]
is identified with a rectangular-poset avoidance sequence.  We derive an
exact ballot-path multisum and hence prove P-recursiveness.

Laudone additionally makes three conjectures concerning the relabeling
orbits of the repeated-letter patterns
\(
1213, 12112, 1231.
\)
If $\Orb(\tau)$ denotes
the orbit of a pattern $\tau$ under arbitrary relabeling of its distinct
letters, then symmetric avoidance removes the choice of the underlying
permutation and leaves a lattice-word problem.  The three conjectural families have three different combinatorial explanations.

\begin{theorem}\label{thm:intro-three-decompositions}
The following statements hold.
\begin{enumerate}
\item For $n\geq2$ and $k\geq1$, a lattice word avoids $\Orb(1213)$ if and only if it consists of the
forced blocks
\[
1^k2^k\cdots(n-2)^k
\]
followed by an arbitrary binary lattice word on $\{n-1,n\}$.  Consequently,
\[
 c_n^k(\Orb(1213))=n!C_k.
\]
\item For $n\geq1$ and $k\geq2$, a lattice word avoids $\Orb(12112)$ if and only if deleting the final
occurrence of every letter leaves
\[
1^{k-1}2^{k-1}\cdots n^{k-1}.
\]
Replacing each non-final occurrence by \(U=(1,1)\) and each final occurrence by \(D=(1,-(k-1))\) gives a bijection with $(k-1)$-Fuss--Dyck paths
having $n$ down-steps. Hence
\[
 c_n^k(\Orb(12112))
 =n!\FC_{k,n}
 =\frac{n!}{(k-1)n+1}\binom{kn}{n}.
\]
\item For $n,k\geq1$, a lattice word avoids $\Orb(1231)$ if and only if the overlap graph of
the first-to-last occurrence intervals is a matching in the path $P_n$.
Equivalently, the word is a unique concatenation of singleton blocks $i^k$
and two-letter blocks indexed by matched edges $\{i,i+1\}$, each two-letter
block being a non-mountain Dyck word. Therefore
\[
 c_n^k(\Orb(1231))
 =n!\sum_{j=0}^{\lfloor n/2\rfloor}
 \binom{n-j}{j}(C_k-1)^j.
\]
\end{enumerate}
\end{theorem}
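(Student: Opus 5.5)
The plan is to prove the three parts separately. In each case we work with lattice words, where every prefix contains at least as many $i$'s as $(i+1)$'s. Each orbit $\Orb(\tau)$ is closed under relabelling, so the underlying permutation can be chosen freely; this is the reduction described in the introduction, and it gives $c_n^k(\Orb(\tau))=n!\cdot\#\{\text{lattice words avoiding }\Orb(\tau)\}$. For each part we prove the structural characterisation in both directions and then count.

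\emph{Part (i).} $\Orb(1213)$ consists of the patterns $abac$ with $a,b,c$ distinct. Assume $n\ge 3$. A lattice word starts with $1$ and ends with $n$.
\begin{itemize}
\item Suppose some letter other than $1$ occurs before the last $1$. By the lattice condition a $2$ then occurs before the last $1$.
\item The last $1$ is not the final letter, since that letter is $n\neq 1$.
\item Hence $1\,2\,1\,n$ is an occurrence of $abac$.
\end{itemize}
So the word begins with $1^k$. Deleting this block and relabelling leaves a lattice word on $n-1$ letters, and induction reduces to the binary case $\{n-1,n\}$, where nothing is forced. For the converse, take an occurrence of $abac$. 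Its letter $a$ has a different letter $b$ between two of its copies, which happens only inside the final binary block. No third letter $c$ occurs after that block. The count is $C_k$ binary lattice words, giving $n!\,C_k$.

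\emph{Part (ii).} $\Orb(12112)$ consists of the patterns $abaab$. Call an occurrence \emph{non-final} if another copy of the same letter appears later; this makes sense because $k\ge2$.

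First suppose a non-final $b$ precedes a non-final $a$ with $a<b$. The first $a$ precedes the first $b$, so we get $a\,b\,a\,a_{\text{fin}}\,b_{\text{fin}}$, an occurrence of $abaab$. Hence deleting final occurrences leaves $1^{k-1}\cdots n^{k-1}$.

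Conversely, take any occurrence of $abaab$. Its $b$ in position $2$ and its $a$ in position $3$ are both non-final. If $a<b$ this contradicts the block order. If $a>b$, the first $a$ is a non-final $a$ preceding a non-final $b$, again a contradiction.

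For the bijection, map non-final occurrences to $U$ and final ones to $D$. By the lattice condition the final copies occur in the order $1,\dots,n$. The final copy of $j$ follows all $j(k-1)$ non-final copies of $1,\dots,j$, which is exactly the condition that the height stays nonnegative. Conversely, given a path, label the $U$'s as $1^{k-1}2^{k-1}\cdots$ and the $D$'s as $1,2,\dots,n$. We then verify the prefix inequalities, which follow from the two orderings. The Fuss--Catalan count finishes this part.

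\emph{Part (iii).} $\Orb(1231)$ consists of the patterns $abca$. Let $I_i=[\firstpos_i,\lastpos_i]$, and join $i\sim j$ in the overlap graph when $I_i$ and $I_j$ meet. The main obstacle is to show that avoidance forces this graph to be a matching of the path $P_n$. I will do this in two steps.
\begin{itemize}
\item \emph{Edges join consecutive letters.} Suppose $i<j$ overlap. Then $\firstpos_j<\lastpos_i$, and the first copies of $i+1,\dots,j-1$ lie between $\firstpos_i$ and $\firstpos_j$. If $j>i+1$ this yields the occurrence $i\,(i{+}1)\,j\,i$.
\item \emph{Degrees are at most one.} Suppose $i$ overlaps both $i-1$ and $i+1$. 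If $\firstpos_{i+1}<\lastpos_{i-1}$, we get $(i{-}1)\,i\,(i{+}1)\,(i{-}1)$. Otherwise $\firstpos_i<\lastpos_{i-1}<\firstpos_{i+1}<\lastpos_i$, giving $i\,(i{-}1)\,(i{+}1)\,i$.
\end{itemize}
Conversely, an occurrence $abca$ puts copies of $b$ and $c$ inside $I_a$, so $a$ has degree $2$. This is impossible in a matching.

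Given the matching, the word splits uniquely into blocks $i^k$ (unmatched letters) and blocks for the matched edges $\{i,i+1\}$. Each edge block is a binary lattice word whose two intervals overlap, that is, any such word other than $i^k(i{+}1)^k$; there are $C_k-1$ of these. The path $P_n$ has $\binom{n-j}{j}$ matchings with $j$ edges. Summing over $j$ and multiplying by $n!$ gives the formula.
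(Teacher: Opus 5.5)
Your proposal is correct and follows essentially the same route as the paper: you force the initial blocks $1^k,\dots,(n-2)^k$ with an $abac$ occurrence, read final versus non-final occurrences as the $D$ and $U$ steps of a Fuss--Dyck path, and encode $\Orb(1231)$-avoidance by the span-overlap graph being a matching of $P_n$ whose edges are decorated by non-mountain Dyck words. The differences are only organisational. In (ii) you argue directly on the skeleton rather than through the paper's binary-restriction lemma $a^{k-1}b^r a b^{k-r}$, and in (iii) you exhibit the occurrences $i\,(i{+}1)\,j\,i$ and $i\,(i{-}1)\,(i{+}1)\,i$ explicitly instead of identifying the neighbours of $a$ with the letters strictly inside $I_a$. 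The routine checks you compress (contiguity of the blocks, and that every decorated matching yields a valid avoiding lattice word) are exactly those the paper writes out.
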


The paper is organized as follows. In Section~\ref{sec:preliminaries}, we fix the
pattern, orbit, lattice-word, tableau, and descent conventions.
In Sections~\ref{sec:three-letter-structure} and~\ref{sec:complete-enumeration}, we prove the
six-class theorem and the complete three-letter classification.
Section~\ref{sec:gamma} develops the descent refinements and the exact
$\gamma$-positivity classification.  Sections~\ref{sec:Bstate} and
\ref{sec:Qstate} study the two non-Catalan classes.
In Section~\ref{sec:symmetric-families}, we prove the three general structural
decompositions in Theorem~\ref{thm:intro-three-decompositions}.
All conjectures and open
questions are collected in
Section~\ref{sec:questions}.
\section{Definitions and preliminaries}\label{sec:preliminaries}

\subsection{Patterns and relabeling orbits}

A \emph{pattern} is a word $\tau=\tau_1\cdots\tau_m$ whose set of letters is
$[r]$ for some $r\leq m$.  Two words $a_1\cdots a_m$ and
$b_1\cdots b_m$ are \emph{order-isomorphic} if
\[
a_i<a_j\Longleftrightarrow b_i<b_j,
\qquad
a_i=a_j\Longleftrightarrow b_i=b_j
\]
for every $i,j$.  A word $w=w_1\cdots w_N$ \emph{contains} $\tau$ if some
subsequence $w_{i_1}\cdots w_{i_m}$ is order-isomorphic to $\tau$ and we say that it \emph{avoids} $\tau$ if it contains no such subsequence.  Avoidance of a set of patterns means simultaneous
avoidance of all members of the set.

The symmetric group $\Sym_r$ acts on patterns with alphabet $[r]$ by relabeling them:
for $\rho\in\Sym_r$,
\[
\rho\cdot(\tau_1\cdots\tau_m)
 =\rho(\tau_1)\cdots\rho(\tau_m).
\]
We will write
\[
\Orb(\tau)=\{\rho\cdot\tau:\rho\in\Sym_r\}.
\]
Thus $\Orb(1213)$ consists of the patterns $abac$ with $a,b,c$ pairwise
distinct, while $\Orb(1231)$ consists of the patterns $abca$ with
$a,b,c$ pairwise distinct.  Similarly,
\[
\Orb(12112)=\{12112,21221\}.
\]
A set of patterns is said to be \emph{symmetric} if it is a union of relabeling orbits.

When every letter is distinct, patterns are ordinary permutations.
For the three-letter part of the paper we use the fixed order
\(
123,132,213,231,312,321
\)
for the elements of $\Sym_3$.

\subsection{Canon permutations, tableaux, and lattice words}

A permutation of $\mathcal M_n^k$ is also called a \emph{$k$-regular word on
$[n]$}.  The $j$th copy of a letter means its $j$th occurrence from the left.

\begin{definition}
A $k$-regular word $\pi$ on $[n]$ is a \emph{canon permutation} if there is a permutation $\sigma=\sigma_1\cdots\sigma_n\in\Sym_n$ such that, for every
$j\in[k]$, the subsequence formed by the $j$th copies of the letters is
$\sigma$.  We call $\sigma$ the \emph{underlying permutation} and write
$\sigma_\pi=\sigma$.
\end{definition}

Let $\Canon_n^k$ be the set of canon permutations and
$\Canon_n^{k,\sigma}$ the fixed-underlying subset.  For a pattern set $\Lambda$,
we write
\[
\Canon_n^k(\Lambda)
 =\{\pi\in\Canon_n^k:\pi\text{ avoids }\Lambda\},
\qquad
c_n^k(\Lambda)=\card{\Canon_n^k(\Lambda)},
\]
and we use analogous notation with the superscript $\sigma$.

A word $w=w_1\cdots w_{nk}$ on $[n]$ is a \emph{$k$-regular lattice word} if
each letter occurs $k$ times and every prefix satisfies
\begin{equation}\label{eq:general-lattice-prefix}
\#_1\geq\#_2\geq\cdots\geq\#_n.
\end{equation}
Let $\Lat_n^k$ denote this set.  The row-word construction gives a bijection
between $\Lat_n^k$ and standard Young tableaux of rectangular shape $(k^n)$:
entry $t$ lies in row $w_t$.  In particular,
\[
\card{\Lat_n^k}=f^{(k^n)}.
\]

For $\sigma\in\Sym_n$, we define
\begin{equation}\label{eq:Phi-general}
\Phi_\sigma(w_1\cdots w_{nk})
 =\sigma_{w_1}\sigma_{w_2}\cdots\sigma_{w_{nk}}.
\end{equation}
The map $\Phi_\sigma$ is a bijection from $\Lat_n^k$ to
$\Canon_n^{k,\sigma}$.

For $w\in\Lat_n^k$, let $p_{i,r}(w)$ be the position of the $r$th occurrence of
$i$.

\begin{lemma}\label{lem:ordered-occurrences}
For every $w\in\Lat_n^k$,
\[
p_{i,r}(w)<p_{j,r}(w)
\qquad(1\leq i<j\leq n,\ 1\leq r\leq k).
\]
A $k$-regular word belongs to $\Lat_n^k$ if and only if these
inequalities hold for all adjacent pairs $j=i+1$.
\end{lemma}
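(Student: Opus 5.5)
The plan is to translate the prefix condition \eqref{eq:general-lattice-prefix} into statements about occurrence positions. The basic tool is this observation: for a $k$-regular word $w$ and letters $i,j$, the prefix counts satisfy $\#_i\geq\#_j$ on every prefix if and only if $p_{i,r}(w)<p_{j,r}(w)$ for every $r\in[k]$.

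To prove the observation, first assume the inequality on positions. Take any prefix of length $t$ with $\#_j=m$ in it. The prefix then contains $p_{j,m}$, and since $p_{i,m}<p_{j,m}\le t$, it also contains the first $m$ copies of $i$. Hence $\#_i\ge m$. For the converse, suppose $p_{j,r}<p_{i,r}$ for some $r$. Take the prefix ending at position $p_{j,r}$. It contains exactly $r$ copies of $j$ but at most $r-1$ copies of $i$, which violates $\#_i\ge\#_j$.

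With the observation, both parts of the lemma follow. For the first part, let $w\in\Lat_n^k$ and $i<j$. Every prefix satisfies $\#_i\ge\#_{i+1}\ge\cdots\ge\#_j$, so in particular $\#_i\ge\#_j$. The observation then gives $p_{i,r}<p_{j,r}$ for all $r$.

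For the characterization, the forward direction is a special case of the first part. For the reverse direction, assume $p_{i,r}<p_{i+1,r}$ for every $i\in[n-1]$ and every $r$. Applying the observation to each adjacent pair gives $\#_i\ge\#_{i+1}$ on every prefix. Chaining these inequalities over $i$ yields \eqref{eq:general-lattice-prefix}, so $w\in\Lat_n^k$.

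There is no real obstacle. The only step needing care is the converse direction of the observation, where one must pick the prefix ending exactly at the $r$th occurrence of $j$.
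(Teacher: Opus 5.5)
Your proof is correct and follows essentially the same route as the paper. The paper also uses the prefix ending at the $r$th copy of $j$ to get a contradiction for the first part, and for the converse it argues that the first $\#_{i+1}$ copies of $i$ have already appeared. Packaging both directions as a single two-letter observation and then chaining adjacent inequalities is only an organizational difference; the case $\#_j=0$ in your observation is trivial.
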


\begin{proof}
If $p_{j,r}<p_{i,r}$ for some $i<j$, then immediately after the $r$th copy of
$j$ appears, the prefix contains at least $r$ copies of $j$ but at most $r-1$
copies of $i$.  This contradicts the prefix inequalities
\eqref{eq:general-lattice-prefix}.  Conversely, suppose
$p_{i,r}<p_{i+1,r}$ for all $i,r$.  At any prefix, if $a_{i+1}$ copies of
$i+1$ have occurred, then the first $a_{i+1}$ copies of $i$ have already
occurred, so $a_i\geq a_{i+1}$.  Hence every prefix satisfies
\eqref{eq:general-lattice-prefix}.
\end{proof}

\subsection{Symmetric avoidance reduction}

For a symmetric pattern set $\Lambda$, define
\[
\Lat_n^k(\Lambda)
 =\{w\in\Lat_n^k:w\text{ avoids }\Lambda\},
\qquad
L_{n,k}(\Lambda)=\card{\Lat_n^k(\Lambda)}.
\]
The following symmetric decomposition appears in Laudone's work.
\begin{theorem}\label{thm:symmetric-reduction}
Let $\Lambda$ be a symmetric set of patterns.  For every
$\sigma\in\Sym_n$ and $w\in\Lat_n^k$,
\[
\Phi_\sigma(w)\text{ avoids }\Lambda
\quad\Longleftrightarrow\quad
w\text{ avoids }\Lambda.
\]
Consequently, every underlying-permutation component has the same size and
\begin{equation}\label{eq:symmetric-factor}
 c_n^k(\Lambda)=n!\,L_{n,k}(\Lambda).
\end{equation}
\end{theorem}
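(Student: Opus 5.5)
The plan is to show that $\Phi_\sigma$ acts on each word by a letter relabeling that is injective and acts the same way at every position, and that pattern containment is unchanged by such a relabeling once $\Lambda$ is closed under relabeling.

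First, a subsequence $w_{i_1}\cdots w_{i_m}$ of $w$ is sent by $\Phi_\sigma$ to the subsequence $\sigma_{w_{i_1}}\cdots\sigma_{w_{i_m}}$ of $\Phi_\sigma(w)$ at the same positions. Since $\sigma$ is a bijection of $[n]$, equalities among the letters are preserved: $w_{i_s}=w_{i_t}$ if and only if $\sigma_{w_{i_s}}=\sigma_{w_{i_t}}$. Hence the two subsequences have the same \emph{equality pattern}, meaning the same partition of positions into classes of equal letters. They may, however, order these classes differently.

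Second, suppose $w_{i_1}\cdots w_{i_m}$ is order-isomorphic to some $\tau\in\Lambda$ with alphabet $[r]$. The subsequence uses $r$ distinct letters $a_1<\cdots<a_r$ of $w$. Their images $\sigma_{a_1},\ldots,\sigma_{a_r}$ are distinct, and their relative order defines a permutation $\rho\in\Sym_r$: set $\rho(s)$ to be the rank of $\sigma_{a_s}$ among these images. Then the image subsequence is order-isomorphic to $\rho\cdot\tau$. Since $\Lambda$ is symmetric, $\rho\cdot\tau\in\Lambda$, so $\Phi_\sigma(w)$ contains a pattern of $\Lambda$.

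Third, the converse follows by the same argument applied with $\sigma^{-1}$. Indeed, $w$ is recovered from $\Phi_\sigma(w)$ by replacing each letter $x$ with $\sigma^{-1}(x)$, which is again an injective relabeling. This gives the equivalence: $\Phi_\sigma(w)$ avoids $\Lambda$ if and only if $w$ avoids $\Lambda$.

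For the counting statement, recall that $\Phi_\sigma\colon\Lat_n^k\to\Canon_n^{k,\sigma}$ is a bijection. By the equivalence, it restricts to a bijection $\Lat_n^k(\Lambda)\to\Canon_n^{k,\sigma}(\Lambda)$. So each underlying-permutation component has size $L_{n,k}(\Lambda)$. Every canon permutation has exactly one underlying permutation, so $\Canon_n^k$ is the disjoint union of the $\Canon_n^{k,\sigma}$ over the $n!$ choices of $\sigma$. Summing over these components gives \eqref{eq:symmetric-factor}.

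I do not expect a real obstacle. The only point needing care is the second step, where one must define $\rho$ correctly when the pattern has repeated letters. This is handled by working with the distinct values $a_1<\cdots<a_r$ that the subsequence uses, rather than with its positions.
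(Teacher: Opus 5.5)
Your proposal is correct and takes essentially the same route as the paper. In both, $\Phi_\sigma$ is a bijective relabeling of letters, so a subsequence with pattern $\tau$ is sent to one with pattern $\rho\cdot\tau$ for some $\rho\in\Sym_r$; the symmetry of $\Lambda$ then gives the avoidance equivalence, and the bijections $\Phi_\sigma$ together with the partition of $\Canon_n^k$ by underlying permutation give $c_n^k(\Lambda)=n!\,L_{n,k}(\Lambda)$. Your construction of $\rho$ from the ranks of $\sigma_{a_1},\ldots,\sigma_{a_r}$ spells out what the paper states in one line. For the converse you apply $\sigma^{-1}$, whereas the paper notes directly that $\tau\in\Lambda$ if and only if $\rho\cdot\tau\in\Lambda$.
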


\begin{proof}
The map $\Phi_\sigma$ changes only the names of the row letters.  Hence a
subsequence of $w$ with equality pattern and relative-order pattern $\tau$
is sent to a subsequence whose pattern is obtained from $\tau$ by a relabeling of its distinct letters.  Since $\Lambda$ is a union of complete relabeling
orbits, the selected subsequence belongs to $\Lambda$ before relabeling if and
only if it belongs to $\Lambda$ afterward.  This proves the avoidance
equivalence.  The maps $\Phi_\sigma$ are bijections and the fixed-underlying
sets partition $\Canon_n^k$, so summing the common component size over the $n!$
choices of $\sigma$ gives \eqref{eq:symmetric-factor}.
\end{proof}

\subsection{The three-letter relabeling reduction}

For $\Lambda\subseteq\Sym_3$ and $\sigma\in\Sym_3$, regard $\sigma$ as the
bijection $i\mapsto\sigma_i$ and put
\[
\sigma^{-1}\Lambda
 =\{\sigma^{-1}\circ\tau:\tau\in\Lambda\}.
\]
For $\Omega\subseteq\Sym_3$, abbreviate
\[
L_k(\Omega)=\card{\Lat_3^k(\Omega)}.
\]
For nonsymmetric pattern sets, the reduction retains the dependence on the underlying permutation.
\begin{theorem}
\label{thm:fixed-underlying-reduction}
For $\sigma\in\Sym_3$, $w\in\Lat_3^k$, and
$\Lambda\subseteq\Sym_3$,
\[
\Phi_\sigma(w)\text{ avoids }\Lambda
\quad\Longleftrightarrow\quad
w\text{ avoids }\sigma^{-1}\Lambda.
\]
Consequently,
\begin{equation}\label{eq:fixed-count-reduction}
 c_3^{k,\sigma}(\Lambda)=L_k(\sigma^{-1}\Lambda)
\end{equation}
and
\begin{equation}\label{eq:global-count-reduction}
 c_3^k(\Lambda)
 =\sum_{\sigma\in\Sym_3}L_k(\sigma^{-1}\Lambda).
\end{equation}
\end{theorem}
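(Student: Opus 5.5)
The plan is to imitate the proof of Theorem~\ref{thm:symmetric-reduction}, keeping track of which relabeling is applied instead of invoking closure under all relabelings. Fix $\sigma\in\Sym_3$, $w\in\Lat_3^k$, and a pattern $\tau\in\Sym_3$. We will show that $\Phi_\sigma(w)$ contains $\tau$ if and only if $w$ contains $\sigma^{-1}\circ\tau$.

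Since $\tau$ has three distinct letters, any occurrence of $\tau$ in a word uses three positions $i_1<i_2<i_3$ carrying three distinct letters. The $t$th letter of $\Phi_\sigma(w)$ is $\sigma_{w_t}$ by \eqref{eq:Phi-general}, and $\sigma$ is a bijection on $[3]$. Hence the letters $w_{i_1},w_{i_2},w_{i_3}$ are distinct if and only if $\sigma_{w_{i_1}},\sigma_{w_{i_2}},\sigma_{w_{i_3}}$ are. Because the alphabet is exactly $[3]$, three distinct letters at these positions form a word that is itself a permutation of $[3]$. So the subsequence is order-isomorphic to $\tau$ exactly when it literally equals $\tau$, that is, when $\sigma_{w_{i_j}}=\tau_j$ for $j=1,2,3$. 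This is equivalent to $w_{i_j}=\sigma^{-1}(\tau_j)$, i.e.\ the subsequence $w_{i_1}w_{i_2}w_{i_3}$ equals $\sigma^{-1}\circ\tau$.

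Taking the union over $\tau\in\Lambda$ gives the first statement:
\[
\Phi_\sigma(w)\text{ avoids }\Lambda \iff w\text{ avoids }\sigma^{-1}\Lambda.
\]
The map $\Phi_\sigma\colon\Lat_3^k\to\Canon_3^{k,\sigma}$ is a bijection, so it restricts to a bijection $\Lat_3^k(\sigma^{-1}\Lambda)\to\Canon_3^{k,\sigma}(\Lambda)$, which gives \eqref{eq:fixed-count-reduction}. Each canon permutation has a unique underlying permutation, so the sets $\Canon_3^{k,\sigma}$ partition $\Canon_3^k$. Summing \eqref{eq:fixed-count-reduction} over $\sigma\in\Sym_3$ then yields \eqref{eq:global-count-reduction}.

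The only point needing care is the identification of order-isomorphism with literal equality. This uses that a word of three distinct letters from the alphabet $[3]$ is order-isomorphic to $\tau$ only if it equals $\tau$. That fact is what lets the relabeling by $\sigma$ be transported exactly onto the pattern, and it is why the statement is special to three-letter patterns over a three-letter alphabet. Everything else is bookkeeping.
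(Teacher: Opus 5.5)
Your proposal is correct and follows the paper's proof. Both arguments rest on the same observation: an occurrence of a length-three permutation pattern in a word on $\{1,2,3\}$ uses one copy of each letter, so a subsequence $\rho$ of $w$ becomes $\sigma\circ\rho$ in $\Phi_\sigma(w)$, and this equals $\tau$ exactly when $\rho=\sigma^{-1}\circ\tau$. Both then obtain the two counting identities from the bijectivity of $\Phi_\sigma$ and the partition of $\Canon_3^k$ by underlying permutation.
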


\begin{proof}
An occurrence of a length-three permutation pattern in a word on
$\{1,2,3\}$ necessarily uses one copy of each letter.  If
$w_{i_1}w_{i_2}w_{i_3}$ has pattern $\rho$, then
$\sigma_{w_{i_1}}\sigma_{w_{i_2}}\sigma_{w_{i_3}}$ has pattern
$\sigma\circ\rho$.  Thus it belongs to $\Lambda$ exactly when
$\rho\in\sigma^{-1}\Lambda$.  The first equivalence follows, and
\eqref{eq:fixed-count-reduction} follows from the bijectivity of
$\Phi_\sigma$.  Finally,
\[
\Canon_3^k(\Lambda)
 =\bigsqcup_{\sigma\in\Sym_3}\Canon_3^{k,\sigma}(\Lambda),
\]
which gives \eqref{eq:global-count-reduction}.
\end{proof}

\begin{corollary}\label{cor:orbit-invariance}
If $\Lambda'=\tau\Lambda$ for some $\tau\in\Sym_3$, then
$c_3^k(\Lambda')=c_3^k(\Lambda)$ for every $k$.
\end{corollary}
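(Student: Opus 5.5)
The plan is to derive the corollary directly from the global count formula \eqref{eq:global-count-reduction} of Theorem~\ref{thm:fixed-underlying-reduction}, via a reindexing of the sum over underlying permutations. Write $\Lambda'=\tau\Lambda=\{\tau\circ\rho:\rho\in\Lambda\}$, which is the same left-composition convention used in defining $\sigma^{-1}\Lambda$. Then
\[
c_3^k(\Lambda')=\sum_{\sigma\in\Sym_3}L_k(\sigma^{-1}\tau\Lambda).
\]

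The key step is the substitution $\sigma'=\tau^{-1}\circ\sigma$. As $\sigma$ ranges over $\Sym_3$, so does $\sigma'$, since left multiplication by $\tau^{-1}$ is a bijection of the group. We also have $\sigma'^{-1}=\sigma^{-1}\circ\tau$, so $\sigma^{-1}\tau\Lambda=\sigma'^{-1}\Lambda$. Hence the sum equals
\[
\sum_{\sigma'\in\Sym_3}L_k(\sigma'^{-1}\Lambda),
\]
which is $c_3^k(\Lambda)$, again by \eqref{eq:global-count-reduction}.

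The only point needing care is that the set-action notation is associative, $(\sigma^{-1}\circ\tau)\Lambda=\sigma^{-1}(\tau\Lambda)$. This is immediate elementwise from associativity of composition. I do not expect a genuine obstacle. As a remark, the argument also shows more: the multiset of fixed-underlying component sizes $\{c_3^{k,\sigma}(\Lambda)\}_\sigma$ is merely permuted, with $c_3^{k,\tau\sigma'}(\Lambda')=c_3^{k,\sigma'}(\Lambda)$.
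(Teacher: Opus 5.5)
Your proposal is correct and follows essentially the paper's own proof: you reindex the sum in \eqref{eq:global-count-reduction} by $\sigma'=\tau^{-1}\sigma$ and use $\sigma'^{-1}=\sigma^{-1}\tau$, exactly as the paper does with $\rho=\tau^{-1}\sigma$. Your closing remark, $c_3^{k,\tau\sigma'}(\Lambda')=c_3^{k,\sigma'}(\Lambda)$, is also correct and is a slightly sharper component-wise form of the same argument.
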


\begin{proof}
Using \eqref{eq:global-count-reduction} and reindexing the sum by
$\rho=\tau^{-1}\sigma$ gives
\[
 c_3^k(\tau\Lambda)
 =\sum_{\sigma\in\Sym_3}L_k(\sigma^{-1}\tau\Lambda)
 =\sum_{\rho\in\Sym_3}L_k(\rho^{-1}\Lambda)
 =c_3^k(\Lambda).
\]
\end{proof}

\subsection{The trivial bijections and descent enumeration}

For a word $w=w_1\cdots w_N$ on $[n]$, let
\[
w^{\rev}=w_N\cdots w_1,
\qquad
w^{\comp}=(n+1-w_1)\cdots(n+1-w_N),
\qquad
w^{\rc}=(w^{\rev})^{\comp}.
\]
These operations preserve canon permutations.  Reverse-complement preserves
the number of descents, while reverse or complement changes descents
into ascents and vice versa.

For a word $w=w_1\cdots w_N$, define
\[
\Des(w)=\{i\in[N-1]:w_i>w_{i+1}\},
\qquad
\des(w)=\card{\Des(w)}.
\]
For $\Lambda\subseteq\Sym_3$, its canon descent polynomial is
\begin{equation}\label{eq:descent-polynomial}
D_{\Lambda,k}(x)
 =\sum_{\pi\in\Canon_3^k(\Lambda)}x^{\des(\pi)}.
\end{equation}

A nonzero polynomial $f(x)=\sum_{j=0}^d a_jx^j$ is
\emph{palindromic of degree $d$} if $a_j=a_{d-j}$ for all $j$.  It has a
unique expansion
\begin{equation}\label{eq:gamma-expansion}
f(x)=\sum_{i=0}^{\lfloor d/2\rfloor}
\gamma_i x^i(1+x)^{d-2i}.
\end{equation}
It is \emph{$\gamma$-positive} if every $\gamma_i\geq0$.  We adopt the
convention that the zero polynomial is palindromic of every degree and is
$\gamma$-positive.

\subsection{Dyck paths, Fuss--Dyck paths, and Narayana polynomials}

A Dyck path of semilength $k$ has steps $U=(1,1)$ and $D=(1,-1)$, starts and
ends at height zero, and never goes below the axis.  Let $\Dyck_k$ denote the set of such paths.  Binary lattice words with $k$ copies of a smaller and a larger letter are identified with $\Dyck_k$ by sending the smaller letter to
$U$ and the larger to $D$. It is clear that the lattice condition makes the path stay weakly above the $x$-axis.  A valley in a Dyck path is an occurrence $DU$, and
\[
N_k(x)=\sum_{P\in\Dyck_k}x^{\val(P)}
 =\sum_{j=0}^{k-1}\frac1k\binom{k}{j}\binom{k}{j+1}x^j
\]
is the $k$-th Narayana polynomial; we also set $N_0(x)=1$.  Its
$\gamma$-expansion is
\begin{equation}\label{eq:narayana-gamma}
N_k(x)=
\sum_{i=0}^{\lfloor(k-1)/2\rfloor}
\binom{k-1}{2i}C_i\,x^i(1+x)^{k-1-2i}.
\end{equation}

For $k\geq2$, a $(k-1)$-Fuss--Dyck path with $n$ down-steps is a path with
$(k-1)n$ steps $U=(1,1)$ and $n$ steps $D=(1,-(k-1))$ that never goes below
height zero.  Let $\FussDyck_{k,n}$ denote this set.  Its cardinality is
\begin{equation}\label{eq:fuss-catalan}
\FC_{k,n}
 =\frac1{(k-1)n+1}\binom{kn}{n}
 =\frac1n\binom{kn}{n-1}.
\end{equation}
These paths are also in bijection with full ordered $k$-ary trees having $n$
internal vertices.

\subsection{The four three-letter counting sequences}

We use
\begin{equation}\label{eq:C-and-T}
C_k=\frac1{k+1}\binom{2k}{k},
\qquad
T_k=\card{\Lat_3^k}
 =f^{(k,k,k)}
 =\frac{2(3k)!}{k!(k+1)!(k+2)!}.
\end{equation}
The additional classes are
\begin{equation}\label{eq:B-and-Q-def}
B_k
 =\card{\{w\in\Lat_3^k:\lastpos_w(1)<\firstpos_w(3)\}},
\qquad
Q_k=\card{\Lat_3^k(321)}
\end{equation}
where $\lastpos_w(i), \firstpos_w(i)$ denote the positions of the last and first occurrence of the letter $i$ in $w$ respectively.
We set $C_0=T_0=B_0=Q_0=1$.  Their first values are
\begin{center}
\begin{tabular}{c|rrrrrrr}
\toprule
$k$ & $0$ & $1$ & $2$ & $3$ & $4$ & $5$ & $6$\\
\midrule
$C_k$ & $1$ & $1$ & $2$ & $5$ & $14$ & $42$ & $132$\\
$T_k$ & $1$ & $1$ & $5$ & $42$ & $462$ & $6006$ & $87516$\\
$B_k$ & $1$ & $1$ & $4$ & $21$ & $121$ & $728$ & $4488$\\
$Q_k$ & $1$ & $1$ & $5$ & $33$ & $234$ & $1706$ & $12618$\\
\bottomrule
\end{tabular}
\end{center}

\subsection{D-finite series}

A formal power series is D-finite if it satisfies a linear differential
equation with polynomial coefficients.  A sequence is P-recursive if it
satisfies a linear recurrence with polynomial coefficients.  A sequence is
P-recursive if and only if its ordinary generating function is D-finite.
We use the standard closure properties of holonomic sums
\cite{StanleyEC2,PetkovsekWilfZeilberger}.
\section{The complete three-letter structural classification}
\label{sec:three-letter-structure}

We now specialize to the alphabet \(\{1,2,3\}\).  We show that \(123\) is unavoidable, \(132\) and \(213\) force the terminal or initial
blocks into special forms, while \(231\) and \(312\) impose the same separation condition.

\begin{lemma}
\label{lem:123-unavoidable}
Every word in \(\Lat_3^k\) contains \(123\).
\end{lemma}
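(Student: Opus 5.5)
The plan is to exhibit an explicit occurrence of $123$ in every $w\in\Lat_3^k$, using the ordered-occurrence structure from Lemma~\ref{lem:ordered-occurrences}. We take the first copy of $1$, the first copy of $2$ and the first copy of $3$, that is, the letters at positions $p_{1,1}(w)$, $p_{2,1}(w)$, $p_{3,1}(w)$. Lemma~\ref{lem:ordered-occurrences} with $r=1$ gives
\[
p_{1,1}(w)<p_{2,1}(w)<p_{3,1}(w).
\]
So the subsequence $w_{p_{1,1}}w_{p_{2,1}}w_{p_{3,1}}=123$ occurs in this order and is order-isomorphic to $123$.

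The only hypotheses needed are that $k\ge1$, so that each letter occurs at least once, and the lattice prefix condition \eqref{eq:general-lattice-prefix}. Lemma~\ref{lem:ordered-occurrences} already packages that condition. For a self-contained check, we can argue directly from the prefix condition. Take the prefix ending at the first $2$: it contains one $2$, so it must contain at least one $1$, and that $1$ occurs strictly earlier because the position of the $2$ itself is not a $1$. The same argument applied to the first $3$ shows that a $2$ precedes it.

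There is essentially no obstacle here. The one point to state carefully is that $k\ge1$ is assumed, since for $k=0$ the word is empty and the lemma would be false. By Theorem~\ref{thm:fixed-underlying-reduction}, this immediately yields $c_3^{k,\sigma}(\Lambda)=0$ whenever $\sigma\in\Lambda$, since then $123\in\sigma^{-1}\Lambda$. We would record that consequence right after the proof, as it accounts for the value $0$ in Theorem~\ref{thm:intro-six-state}.
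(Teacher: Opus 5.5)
Your proof is correct and is exactly the paper's argument: by Lemma~\ref{lem:ordered-occurrences} with $r=1$, the first copies of $1,2,3$ appear in increasing order, which gives an occurrence of $123$. Your remarks are also accurate: the lemma needs $k\ge1$, and it gives $c_3^{k,\sigma}(\Lambda)=0$ whenever $\sigma\in\Lambda$, since then $123=\sigma^{-1}\circ\sigma\in\sigma^{-1}\Lambda$.
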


\begin{proof}
By Lemma~\ref{lem:ordered-occurrences}, the first occurrences satisfy
\[
p_{1,1}<p_{2,1}<p_{3,1}.
\]
This gives an occurrence of \(123\).
\end{proof}

Let \(\mathcal D_k\) be the set of binary lattice words with \(k\) copies of
each of two ordered letters. As noted earlier, identifying the smaller letter with \(U\) and
the larger with \(D\) gives the usual bijection
\(\mathcal D_k\simeq\Dyck_k\).

\begin{theorem}
\label{thm:block-specifications}
For every \(k\geq1\), the following descriptions hold.
\begin{enumerate}
\item A word \(w\in\Lat_3^k\) avoids \(132\) if and only if
\[
w=v3^k,
\]
where \(v\) is a binary lattice word on \(\{1,2\}\).

\item A word \(w\in\Lat_3^k\) avoids \(213\) if and only if
\[
w=1^kv,
\]
where \(v\) is a binary lattice word on \(\{2,3\}\).

\item The simultaneous avoidance class is the singleton
\[
\Lat_3^k(132,213)=\{1^k2^k3^k\}.
\]

\item For \(w\in\Lat_3^k\), the following are equivalent:
\begin{enumerate}
\item \(w\) contains \(231\);
\item \(w\) contains \(312\);
\item an occurrence of \(3\) precedes a later occurrence of \(1\).
\end{enumerate}
Consequently,
\begin{equation}\label{eq:separator-class}
\Lat_3^k(231)=\Lat_3^k(312)
 =\{w\in\Lat_3^k:p_{1,k}<p_{3,1}\},
\end{equation}
and adding \(321\) to either forbidden set does not change the class.
\end{enumerate}
\end{theorem}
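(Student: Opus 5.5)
The whole argument rests on Lemma~\ref{lem:ordered-occurrences}, used as follows: for each $r$, the $r$th copies satisfy $p_{1,r}<p_{2,r}<p_{3,r}$. This produces many forced occurrences of the increasing pattern and of its "shifted" variants. Each part is then a short contradiction argument that matches a pattern occurrence against these forced orderings.

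\textbf{Part (i).} For the "if" direction, take $w=v3^k$. Any occurrence of $132$ needs a $3$ followed by a $2$. Every $3$ lies after all of $v$, so no $2$ follows a $3$; hence $w$ avoids $132$. Also, $w$ is a lattice word exactly when $v$ is a binary lattice word on $\{1,2\}$, since appending $3$s to the end preserves the prefix inequalities.

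For the "only if" direction, suppose some $3$ precedes some $2$ at position $q$. The first $3$ has position $p_{3,1}>p_{1,1}$, so the subsequence $1$ (at $p_{1,1}$), $3$, $2$ gives an occurrence of $132$. Hence every $2$ precedes every $3$. Since $p_{1,k}<p_{2,k}<p_{3,1}$, every $1$ also precedes every $3$, so $w=v3^k$. The lattice condition on $w$ then restricts to the lattice condition on $v$.

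\textbf{Parts (ii) and (iii).} Part (ii) is symmetric to part (i). If some $2$ precedes some $1$, then taking the last $3$ at $p_{3,k}$, which lies after everything relevant since $p_{3,k}>p_{2,k}$ and $p_{3,k}>p_{1,k}$, gives $213$. So every $1$ precedes every $2$, and hence precedes every $3$ as well, because $p_{3,1}>p_{2,1}$. I could also try to deduce (ii) from (i) by reverse-complement, but reverse-complement does not obviously preserve the lattice property, so the direct argument is safer. Part (iii) is the intersection of (i) and (ii): $w=1^k v$ with $v$ containing no $1$, and $w=v'3^k$, force $w=1^k2^k3^k$.

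\textbf{Part (iv).} The implications (a)$\Rightarrow$(c) and (b)$\Rightarrow$(c) are immediate, since both $231$ and $312$ contain a $3$ before a $1$. For (c)$\Rightarrow$(a), suppose a $3$ at position $s$ precedes a $1$ at position $t$. This $3$ is some $r$th copy, so $p_{2,r}<s$, and the subsequence $2\,3\,1$ is an occurrence of $231$. For (c)$\Rightarrow$(b), the $1$ at position $t$ is some $r$th copy, so $p_{2,r}>t$, and the subsequence $3\,1\,2$ is an occurrence of $312$.

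The displayed characterization follows because "no $3$ before a $1$" is equivalent to $p_{1,k}<p_{3,1}$. Finally, $321$ contains a $3$ before a $1$, so any word containing $321$ also contains $231$. Adding $321$ to either forbidden set therefore changes nothing. I expect no real obstacle here; the only care needed is choosing the correct copy index in each witness.
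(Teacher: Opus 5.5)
Your proof is correct and follows essentially the paper's route: each part exhibits an explicit $132$, $213$, $231$ or $312$ occurrence from the ordering of copies in Lemma~\ref{lem:ordered-occurrences}. Your copy-index choice of witnesses (for instance the $2$ at $p_{2,r}$ after the $r$th copy of $1$ in the $312$ case) is, if anything, tidier than the paper's prefix-count wording, which only locates a $2$ after the selected $3$.

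The one difference is that the paper obtains (ii) from (i) by reverse-complement, and your hesitation there is unnecessary. Since $p_{4-i,r}(w^{\rc})=3k+1-p_{i,k+1-r}(w)$, Lemma~\ref{lem:ordered-occurrences} shows at once that $w^{\rc}\in\Lat_3^k$ whenever $w\in\Lat_3^k$. Your direct argument for (ii) is equally valid.
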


\begin{proof}
Suppose that \(w\) avoids \(132\).  If an occurrence of \(3\) is not in
the final block, then it is followed by a \(1\) or a \(2\).  The prefix ending
at that \(3\) already contains a \(1\).  If a later \(2\) exists, these three
letters form \(1,3,2\).  If instead the selected \(3\) is followed by a \(1\),
then not all \(2\)'s can have occurred before it: otherwise the prefix
inequality \(\#_1\geq\#_2=k\) would imply that all \(1\)'s had also occurred.
Thus a later \(2\) again exists, and we obtain \(132\).  Hence all \(3\)'s form
the terminal block \(3^k\).  Deleting it leaves a binary lattice word on
\(\{1,2\}\).  Conversely, in a word \(v3^k\), no \(2\) follows a \(3\), so
\(132\) cannot occur.  This proves (i).

Part (ii) follows from the reverse-complementation of (i).  If both conditions hold, the word
has an initial block \(1^k\) and a terminal block \(3^k\). The remaining
letters are all \(2\)'s, giving (iii).

For (iv), either a \(231\)- or a \(312\)-occurrence has a \(3\) before
a later \(1\).  Conversely, suppose a \(3\) precedes a later \(1\).  The prefix
ending at the selected \(3\) contains a \(2\), so choosing such a \(2\), then
the \(3\), then the later \(1\), gives \(231\).  Moreover, not all \(2\)'s can
occur before the selected \(3\), for that would force all \(1\)'s to have
occurred there as well.  A later \(2\) therefore exists, and \(3,1,2\) gives
\(312\).  Thus avoidance of either pattern is exactly the condition that all
\(1\)'s precede all \(3\)'s.  Under this condition a decreasing triple
\(3,2,1\) is impossible, so forbidding \(321\) does nothing.
\end{proof}

\begin{example}
For \(k=3\), the word
\[
112122333
\]
avoids \(132\) and is obtained from the Dyck word \(112122\) by appending
\(333\).  Its reverse-complement \(111223233\) illustrates the \(213\)-class.
The word
\[
112212333
\]
has all \(1\)'s before all \(3\)'s and hence avoids both \(231\) and \(312\),
but it need not have a forced initial or terminal Catalan block.
\end{example}
These constraints give the complete six-class classification.
\begin{theorem}
\label{thm:six-state}
Let \(\Omega\subseteq\Sym_3\).  Exactly one of the following cases applies.
\begin{enumerate}
\item If \(123\in\Omega\), then \(L_k(\Omega)=0\).

\item If \(123\notin\Omega\) and \(132,213\in\Omega\), then
\[
L_k(\Omega)=1,
\]
and the unique word is \(1^k2^k3^k\).

\item If \(123\notin\Omega\) and exactly one of \(132,213\) lies in
\(\Omega\), then
\[
L_k(\Omega)=C_k.
\]
The class is a binary lattice word with a forced initial or terminal block.

\item If none of \(123,132,213\) lies in \(\Omega\), and at least one of
\(231,312\) lies in \(\Omega\), then
\[
L_k(\Omega)=B_k.
\]
Equivalently, every \(1\) precedes every \(3\).

\item If \(\Omega=\{321\}\), then \(L_k(\Omega)=Q_k\).

\item If \(\Omega=\varnothing\), then \(L_k(\Omega)=T_k\).
\end{enumerate}
Thus every fixed-underlying-permutation avoidance component has one of exactly six
possible cardinalities.
\end{theorem}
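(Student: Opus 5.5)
The plan is a case analysis that follows the order of the statement and reads each case off Lemma~\ref{lem:123-unavoidable} and Theorem~\ref{thm:block-specifications}. The cases are mutually exclusive and exhaustive: either \(123\in\Omega\), or \(123\notin\Omega\) and we look at how many of \(132,213\) lie in \(\Omega\) (two, one, or none). In the last situation we ask whether \(\Omega\cap\{231,312\}\) is nonempty. If it is empty, then \(\Omega\subseteq\{321\}\), which gives cases (v) and (vi). So exactly one case applies, and it remains to compute \(L_k(\Omega)\) in each.

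Case (i) is immediate from Lemma~\ref{lem:123-unavoidable}, since no lattice word avoids \(123\). For case (ii), Theorem~\ref{thm:block-specifications}(iii) shows that avoiding \(\{132,213\}\) leaves only \(1^k2^k3^k\). I then check directly that this word avoids every pattern other than \(123\): its letters appear in weakly increasing order, so the only length-three pattern with distinct letters that it contains is \(123\). Hence adding any further patterns from \(\Sym_3\setminus\{123\}\) to \(\Omega\) keeps the count equal to \(1\).

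For case (iii), suppose \(132\in\Omega\) and \(213\notin\Omega\); the other possibility follows by reverse-complement, or by the symmetric argument using Theorem~\ref{thm:block-specifications}(ii). By Theorem~\ref{thm:block-specifications}(i), the words avoiding \(132\) are exactly \(v3^k\) with \(v\in\mathcal D_k\) on \(\{1,2\}\). Every such word avoids \(231\), \(312\) and \(321\), because no letter follows a \(3\). So any extra patterns in \(\Omega\) drawn from \(\{231,312,321\}\) impose nothing further, and \(L_k(\Omega)=|\mathcal D_k|=C_k\).

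For case (iv), Theorem~\ref{thm:block-specifications}(iv) shows that avoiding either \(231\) or \(312\) is equivalent to \(p_{1,k}<p_{3,1}\). In that situation \(231\), \(312\) and \(321\) are all avoided, so every admissible \(\Omega\) gives the same set, whose size is \(B_k\) by definition. Cases (v) and (vi) hold by the definitions of \(Q_k\) and \(T_k\).

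The only point needing care is verifying, in cases (ii) through (iv), that the structural description absorbs every additional forbidden pattern the case allows. This is the step I would check most explicitly, since each case covers several different sets \(\Omega\).
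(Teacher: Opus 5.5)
Your proposal is correct and follows the paper's proof essentially step for step: Lemma~\ref{lem:123-unavoidable} gives case (i), and parts (iii), (i)/(ii), and (iv) of Theorem~\ref{thm:block-specifications} give cases (ii)--(iv), each time with the same check that the structural form makes every remaining admissible prohibition redundant; the definitions of $Q_k$ and $T_k$ give the last two cases. Your explicit exhaustiveness argument and your reverse-complement reduction in case (iii) are harmless additions, since reverse-complement preserves $\Lat_3^k$ and swaps $132\leftrightarrow213$ and $231\leftrightarrow312$.
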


\begin{proof}
The first case is Lemma~\ref{lem:123-unavoidable}.  In the second case,
Theorem~\ref{thm:block-specifications}(iii) leaves the unique word
\(1^k2^k3^k\).  That word contains no length-three permutation pattern other
than \(123\), which is not forbidden, so any additional members of \(\Omega\)
do not remove it.

In the third case, Theorem~\ref{thm:block-specifications}(i) or (ii) leaves an
arbitrary binary lattice word, counted by \(C_k\).  In the \(132\)-avoiding
form \(v3^k\), the only possible length-three patterns are \(123\) and
\(213\); in the \(213\)-avoiding form \(1^kv\), they are \(123\) and
\(132\). Hence the remaining possible prohibitions have no effect.

In the fourth case, Theorem~\ref{thm:block-specifications}(iv) gives exactly
the class defining \(B_k\); the possible additional prohibition
\(321\) is redundant.  Once the first four mutually exclusive cases are
excluded, the only patterns that may remain forbidden are \(321\) alone, or
none at all.  These are the definitions of \(Q_k\) and \(T_k\), respectively.
\end{proof}

\section{The twelve enumerative types}
\label{sec:complete-enumeration}

Combining the decomposition of Canon permutations into separate underlying permutation sets with the six-class theorem of the previous section gives the following theorem.

\begin{theorem}[Twelve-class enumeration]
\label{thm:twelve-class-enumeration}
For every \(k\geq1\), the value of \(c_3^k(\Lambda)\) is given by the following
classification.

\medskip
\noindent\textbf{No forbidden patterns.}
\[
c_3^k(\varnothing)=6T_k.
\]

\medskip
\noindent\textbf{One forbidden pattern.}
For every \(\tau\in\Sym_3\),
\[
c_3^k(\tau)=Q_k+2B_k+2C_k.
\]

\medskip
\noindent\textbf{Two forbidden patterns.}
\begin{center}
\small
\begin{tabularx}{\textwidth}{>{\raggedright\arraybackslash}p{0.19\textwidth}X}
\toprule
Formula & Pattern sets \(\Lambda\)\\
\midrule
\(4C_k\)
& \(\{132,231\}\), \(\{213,312\}\), \(\{123,321\}\)\\[2pt]
\(B_k+2C_k+1\)
& \(\{132,213\}\), \(\{123,231\}\), \(\{123,312\}\),
  \(\{231,312\}\), \(\{132,321\}\), \(\{213,321\}\)\\[2pt]
\(2B_k+2C_k\)
& \(\{123,132\}\), \(\{123,213\}\), \(\{213,231\}\),
  \(\{132,312\}\), \(\{231,321\}\), \(\{312,321\}\)\\
\bottomrule
\end{tabularx}
\end{center}

\medskip
\noindent\textbf{Three forbidden patterns.}
\begin{center}
\small
\begin{tabularx}{\textwidth}{>{\raggedright\arraybackslash}p{0.19\textwidth}X}
\toprule
Formula & Pattern sets \(\Lambda\)\\
\midrule
\(3\)
& \(\{123,231,312\}\), \(\{132,213,321\}\)\\[2pt]
\(B_k+2C_k\)
& \(\{123,132,213\}\), \(\{123,213,231\}\),
  \(\{123,132,312\}\), \(\{213,231,321\}\),
  \(\{132,312,321\}\), \(\{231,312,321\}\)\\[2pt]
\(2C_k+1\)
& \(\{123,132,231\}\), \(\{132,213,231\}\),
  \(\{123,213,312\}\), \(\{132,213,312\}\),
  \(\{132,231,312\}\), \(\{213,231,312\}\),
  \(\{123,132,321\}\), \(\{123,213,321\}\),
  \(\{123,231,321\}\), \(\{132,231,321\}\),
  \(\{123,312,321\}\), \(\{213,312,321\}\)\\
\bottomrule
\end{tabularx}
\end{center}

\medskip
\noindent\textbf{Four forbidden patterns.}
\begin{center}
\small
\begin{tabularx}{\textwidth}{>{\raggedright\arraybackslash}p{0.19\textwidth}X}
\toprule
Formula & Pattern sets \(\Lambda\)\\
\midrule
\(2C_k\)
& \(\{123,132,213,231\}\), \(\{123,132,213,312\}\),
  \(\{123,213,231,321\}\), \(\{123,132,312,321\}\),
  \(\{132,231,312,321\}\), \(\{213,231,312,321\}\)\\[2pt]
\(2\)
& \(\{123,132,231,312\}\), \(\{123,213,231,312\}\),
  \(\{132,213,231,312\}\), \(\{123,132,213,321\}\),
  \(\{123,132,231,321\}\), \(\{132,213,231,321\}\),
  \(\{123,213,312,321\}\), \(\{132,213,312,321\}\),
  \(\{123,231,312,321\}\)\\
\bottomrule
\end{tabularx}
\end{center}

\medskip
\noindent\textbf{Five or six forbidden patterns.}
If \(\card{\Lambda}=5\), then \(c_3^k(\Lambda)=1\).  If
\(\Lambda=\Sym_3\), then \(c_3^k(\Lambda)=0\).
\end{theorem}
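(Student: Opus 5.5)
The plan is to compute, for each $\Lambda\subseteq\Sym_3$, the six translates $\sigma^{-1}\Lambda$ and sum their values. By \eqref{eq:global-count-reduction},
\[
c_3^k(\Lambda)=\sum_{\sigma\in\Sym_3}L_k(\sigma^{-1}\Lambda),
\]
and each summand is read off from Theorem~\ref{thm:six-state}. Each summand is determined by a membership test on the translate $\Omega=\sigma^{-1}\Lambda$:
\begin{itemize}
\item the value is $0$ if $123\in\Omega$;
\item it is $1$ if $132,213\in\Omega$ and $123\notin\Omega$;
\item it is $C_k$ if exactly one of $132,213$ lies in $\Omega$ and $123\notin\Omega$;
\item it is $B_k$ if none of $123,132,213$ lies in $\Omega$ but some element of $\{231,312\}$ does;
\item it is $Q_k$ if $\Omega=\{321\}$, and $T_k$ if $\Omega=\varnothing$.
\end{itemize}
Since $\sigma\mapsto\sigma^{-1}$ is a bijection of $\Sym_3$, the multiset $\{\sigma^{-1}\Lambda\}$ is the left-coset-type orbit $\{\rho\Lambda:\rho\in\Sym_3\}$. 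The task therefore reduces to listing, for one representative $\Lambda$ of each orbit under left multiplication, the six sets $\rho\Lambda$ and classifying each one.

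To organize the work, I will use Corollary~\ref{cor:orbit-invariance}: $c_3^k$ is constant on left-translation orbits of subsets. I will also use the complementation $\Lambda\mapsto\Sym_3\setminus\Lambda$, which commutes with left translation, so the orbits of $4$-sets mirror those of $2$-sets and $5$-sets mirror $1$-sets.

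The counting then goes by size of $\Lambda$.
\begin{itemize}
\item For $\Lambda=\varnothing$, all six translates are empty, giving $6T_k$.
\item For a single pattern $\tau$, the translates $\rho\tau$ run over all of $\Sym_3$ once each. The contributions are $0$ (from $123$), $C_k$ twice (from $132$ and $213$), $B_k$ twice (from $231$ and $312$), and $Q_k$ (from $321$), giving $Q_k+2B_k+2C_k$.
\item For two patterns, $\{\alpha,\beta\}$ is determined up to translation by $\alpha^{-1}\beta$. The orbits are indexed by the conjugacy type of the quotient: an involution (three transpositions, each giving an orbit of size $3$ on unordered pairs) or a $3$-cycle (one orbit of size $6$). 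That yields the three rows of the table, with sizes $3,6,6$. For each orbit I will take a convenient representative and list its six translates, classifying each. For example, $\{213,312\}$ has translates containing $123$ twice, giving $2C_k$ from the remaining ones as I compute, and so on.
\item Three-element sets give orbits of sizes $2,6,12$: the two cosets of $A_3$, and the rest.
\item Four- and five-element sets follow by complementation symmetry for orbit structure, but their values must still be computed from the six translates directly.
\end{itemize}

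The main obstacle is purely bookkeeping. For each orbit representative I must correctly compute the six left translates, which means composing permutations with the convention $\sigma^{-1}\circ\tau$. I then have to check the membership conditions in the right priority order, especially distinguishing $C_k$ from $1$ and $B_k$ from $Q_k$. Getting the composition convention right is error-prone, so I will first verify the singleton case, where the answer is convention-independent, and then check one two-element orbit, for instance $\{123,321\}$, whose translates pair each $\rho$ with $\rho\circ321$, by hand. After that, I will confirm the orbit sizes sum to $\binom6j$ for each $j$, as a completeness check on the tables.
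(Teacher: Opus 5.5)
Your strategy is the paper's own. You write $c_3^k(\Lambda)=\sum_{\rho}L_k(\rho\Lambda)$ via \eqref{eq:global-count-reduction}, read each component off Theorem~\ref{thm:six-state}, and check one representative per left-translation orbit.

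What needs fixing is the orbit bookkeeping. As stated, it would leave parts of the tables unverified.
\begin{itemize}
\item \textbf{Three patterns.} A group of order $6$ has no orbit of size $12$. The stabilizer of a $3$-set under left multiplication is trivial or $A_3$, so the orbits have sizes $2,6,6,6$. The $2C_k+1$ row is the union of two orbits, those of $\{123,132,231\}$ and $\{123,213,312\}$.
\item \textbf{Two patterns.} The invariant is $\alpha^{-1}\beta$ up to inversion, not its conjugacy type. There are therefore four orbits, of sizes $3,3,3,6$, as your own parenthetical indicates. They are not in bijection with the three rows. The $4C_k$ row is the orbit with quotient $321$, and the $B_k+2C_k+1$ row is the $3$-cycle orbit. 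The $2B_k+2C_k$ row is the union of the orbits with quotients $132$ and $213$.
\item \textbf{Four patterns.} By complementation, these rows are also unions of orbits: $3+3$ for $2C_k$ and $3+6$ for $2$.
\end{itemize}
Your check that the orbit sizes add up to $\binom6j$ cannot detect this, because your sizes $3+6+6$ and $2+6+12$ also add up correctly. You need a representative of each of the four actual orbits for $|\Lambda|=2,3,4$.

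Alternatively, you can use reverse-complement, $\Lambda\mapsto 321\circ\Lambda\circ 321$. It swaps $132\leftrightarrow213$ and $231\leftrightarrow312$ and leaves the case distinctions of Theorem~\ref{thm:six-state} invariant, so it preserves $c_3^k$. It exchanges the two orbits in each of the rows $2B_k+2C_k$, $2C_k+1$ and $2C_k$. However, it maps each of the two orbits in the row $2$ to itself, so those two must be computed separately.

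Your sample computation is also off. The six translates of $\{213,312\}$ are $\{213,312\}$ twice and $\{132,231\}$ twice, each contributing $C_k$, together with $\{123,321\}$ twice, contributing $0$. The total is $4C_k$, not $2C_k$.
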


\begin{proof}
For each \(\Lambda\), Theorem~\ref{thm:fixed-underlying-reduction} expresses
the count as the sum of the six classes
\(L_k(\sigma^{-1}\Lambda)\).  Applying
Theorem~\ref{thm:six-state} gives the following complete list of components.
\begin{center}
\small
\begin{tabular}{c|c}
\toprule
Total formula & multiset of six component classes\\
\midrule
\(6T_k\) & \(T_k,T_k,T_k,T_k,T_k,T_k\)\\
\(Q_k+2B_k+2C_k\) & \(Q_k,B_k,B_k,C_k,C_k,0\)\\
\(4C_k\) & \(C_k,C_k,C_k,C_k,0,0\)\\
\(B_k+2C_k+1\) & \(B_k,C_k,C_k,1,0,0\)\\
\(2B_k+2C_k\) & \(B_k,B_k,C_k,C_k,0,0\)\\
\(3\) & \(1,1,1,0,0,0\)\\
\(B_k+2C_k\) & \(B_k,C_k,C_k,0,0,0\)\\
\(2C_k+1\) & \(C_k,C_k,1,0,0,0\)\\
\(2C_k\) & \(C_k,C_k,0,0,0,0\)\\
\(2\) & \(1,1,0,0,0,0\)\\
\(1\) & \(1,0,0,0,0,0\)\\
\(0\) & \(0,0,0,0,0,0\)\\
\bottomrule
\end{tabular}
\end{center}
A direct application of the six relabelings to one representative of each
left orbit produces the pattern lists in the statement. Summing each components gives the asserted formula.
\end{proof}
\begin{corollary}
\label{cor:four-catalan-count}
For every \(k\geq1\),
\[
c_3^k(213,312)=c_3^k(132,231)=c_3^k(123,321)=4C_k.
\]
More precisely, the underlying underlying permutations are
\begin{align*}
\{213,312\}:&\quad 123,132,231,321,\\
\{132,231\}:&\quad 123,213,312,321,\\
\{123,321\}:&\quad 132,213,231,312.
\end{align*}
Each underlying component is naturally bijective with \(\Dyck_k\) by deleting the
forced block in Theorem~\ref{thm:block-specifications}.
\end{corollary}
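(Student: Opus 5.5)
The plan is to read off each of the three counts from Theorem~\ref{thm:fixed-underlying-reduction} and Theorem~\ref{thm:six-state}. By \eqref{eq:fixed-count-reduction}, the $\sigma$-component of $\Canon_3^k(\Lambda)$ has size $L_k(\sigma^{-1}\Lambda)$, where $\sigma^{-1}\Lambda$ is obtained by applying $\sigma^{-1}$ letterwise to each pattern. So for each of the three $\Lambda$ and each of the six $\sigma$ I need to know which of $123,132,213$ lie in $\sigma^{-1}\Lambda$.

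First I will identify the vanishing components. Since $\sigma^{-1}\circ\tau=123$ iff $\tau=\sigma$, we have $123\in\sigma^{-1}\Lambda$ exactly when $\sigma\in\Lambda$. By case (i) of Theorem~\ref{thm:six-state}, the two components with $\sigma\in\Lambda$ are therefore empty. These are $213,312$ for the first set, $132,231$ for the second, and $123,321$ for the third. This already explains why the surviving underlying permutations are the complements listed in the statement.

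Next, for each of the four surviving $\sigma$ I will check that exactly one of $\sigma\circ132$ and $\sigma\circ213$ lies in $\Lambda$. This is equivalent to exactly one of $132,213$ lying in $\sigma^{-1}\Lambda$, so case (iii) of Theorem~\ref{thm:six-state} applies and the component has size $C_k$. This is a finite check of twelve pairs, and it is the only step requiring care: one must keep track of which side the composition acts on, and one must rule out both $132,213\in\sigma^{-1}\Lambda$, which would give case (ii) and the value $1$ instead.

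A useful shortcut is that $\sigma\circ132$ and $\sigma\circ213$ are obtained from the word $\sigma$ by swapping its last two letters and its first two letters, respectively. For $\Lambda=\{213,312\}$ and $\sigma=123$, swapping gives $132$ and $213$, so exactly one lands in $\Lambda$. The other eleven cases are checked the same way. Summing the four components gives $4C_k$, consistent with the table in the proof of Theorem~\ref{thm:twelve-class-enumeration}.

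For the bijection with $\Dyck_k$, I compose $\Phi_\sigma^{-1}$ with the deletion of the forced block. In the case $132\in\sigma^{-1}\Lambda$, Theorem~\ref{thm:block-specifications}(i) says the lattice word has the form $v3^k$, and I delete the block $3^k$. In the case $213\in\sigma^{-1}\Lambda$, Theorem~\ref{thm:block-specifications}(ii) says the word has the form $1^kv$, and I delete the block $1^k$. In both cases the remainder $v$ is a binary lattice word, which is identified with a Dyck path via $\mathcal D_k\simeq\Dyck_k$.
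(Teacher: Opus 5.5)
Your proposal is correct and follows essentially the same route as the paper: compute $\sigma^{-1}\Lambda$ for each $\sigma$, use case~(i) of Theorem~\ref{thm:six-state} for the two permutations $\sigma\in\Lambda$ and case~(iii) for the remaining four, and obtain the Dyck bijection by applying $\Phi_\sigma^{-1}$ and deleting the forced block from Theorem~\ref{thm:block-specifications}. Your two observations, that $123\in\sigma^{-1}\Lambda$ exactly when $\sigma\in\Lambda$ and that $\sigma\circ132$ and $\sigma\circ213$ are the last-two and first-two swaps of $\sigma$, make the twelve-case check more transparent than the paper's unexplained computation, and all twelve cases do go through.
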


\begin{proof}
The count is the \(4C_k\) row of
Theorem~\ref{thm:twelve-class-enumeration}.  To identify the components, compute
\(\sigma^{-1}\Lambda\) for the six values of \(\sigma\).  In precisely the
four displayed cases, it contains exactly one of \(132,213\) and does not
contain \(123\); Theorem~\ref{thm:six-state}(iii) gives a Catalan component and
Theorem~\ref{thm:block-specifications} supplies the explicit Dyck bijection.
For the other two underlying permutations, \(123\in\sigma^{-1}\Lambda\), so
the component is empty.
\end{proof}

\begin{example}
For \(\Lambda=\{213,312\}\), choose the Dyck word \(v=223233\) on
\(\{2,3\}\), corresponding to \(UUDUDD\).  The first two components use the row
word \(111223233\), while the last two use the reverse-block form
\(112122333\) associated with the same unlabelled Dyck path.  Applying
\(\Phi_{123},\Phi_{132},\Phi_{231},\Phi_{321}\) produces one canon
permutation in each of the four Catalan pieces.
\end{example}
The twelve formulas also imply D-finiteness.
\begin{corollary}
\label{cor:all-D-finite}
For every \(\Lambda\subseteq\Sym_3\), the sequence
\(k\mapsto c_3^k(\Lambda)\) is P-recursive, and its ordinary generating
function is D-finite.  If its formula contains only \(B_k\), \(C_k\), and
constants, the ordinary generating function is algebraic.
\end{corollary}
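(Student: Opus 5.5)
By Theorem~\ref{thm:twelve-class-enumeration}, each sequence $k\mapsto c_3^k(\Lambda)$ is a nonnegative integer combination of the constant sequence $1$, $C_k$, $B_k$, $Q_k$ and $T_k$, with constant terms possibly $0$. P-recursive sequences are closed under addition and scalar multiplication, equivalently D-finite series are closed under linear combination \cite{StanleyEC2}. So it is enough to prove that each of these five sequences is P-recursive, and that $1$, $C_k$, $B_k$ have algebraic generating functions. Algebraic series are closed under addition and are D-finite, so the second claim then also follows from linear closure.

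The first checks are routine. The constant sequence satisfies $a_{k+1}-a_k=0$, with generating function $1/(1-x)$. The Catalan numbers satisfy $(k+2)C_{k+1}=2(2k+1)C_k$, with algebraic generating function $(1-\sqrt{1-4x})/(2x)$. From \eqref{eq:C-and-T}, $T_k$ is hypergeometric: the ratio $T_{k+1}/T_k$ is a rational function of $k$, namely
\[
\frac{(3k+3)(3k+2)(3k+1)}{(k+1)(k+2)(k+3)},
\]
so $T_k$ is P-recursive.

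For $B_k$, I would cite the closed binomial formula and cubic algebraic generating function announced in the introduction and proved in Section~\ref{sec:Bstate}. An algebraic series is D-finite, so $B_k$ is P-recursive. For $Q_k$, I would cite the ballot-path multisum from Section~\ref{sec:Qstate}. It expresses $Q_k$ as a finite sum of products of binomial coefficients whose summation ranges are determined linearly by $k$, that is, a sum of a proper hypergeometric term. The standard closure of holonomic sums \cite{PetkovsekWilfZeilberger}, or Zeilberger's creative telescoping, then yields a linear recurrence with polynomial coefficients.

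The main obstacle is that the corollary is stated before the $B_k$ and $Q_k$ analyses. Two steps in particular carry the weight: making sure the summand for $Q_k$ is genuinely proper hypergeometric (holonomic) in all its variables, and checking that the $B_k$ generating function really is algebraic. If a forward reference is undesirable, I would first try to get algebraicity of $B_k$ directly from the definition. Since every $1$ precedes every $3$, the word splits at $p_{1,k}$ into a prefix on $\{1,2\}$ with all $k$ ones, which is a lattice path, and a suffix on $\{2,3\}$ that must complete the lattice condition. Counting this gives a finite binomial double sum, which is P-recursive, and a kernel-method argument gives the algebraic equation. Once these facts are in place, the corollary follows by linearity. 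No algebraicity is claimed for formulas involving $Q_k$ or $T_k$.
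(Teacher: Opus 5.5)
Your proposal is correct and matches the paper's proof: you reduce via the twelve formulas of Theorem~\ref{thm:twelve-class-enumeration} to the building blocks $1, C_k, B_k, Q_k, T_k$, note that $C_k$ and $T_k$ are hypergeometric, that $B(z)$ is algebraic by Theorem~\ref{thm:B-algebraic}, and that $Q_k$ is P-recursive by the multisum argument of Corollary~\ref{cor:Q-holonomic}, and then conclude by closure of D-finite (resp.\ algebraic) series under finite linear combinations. The paper also relies on these forward references, so your fallback kernel-method argument for $B_k$ is not needed.
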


\begin{proof}
The sequences \(C_k\) and \(T_k\) are hypergeometric and hence P-recursive.
The sequence \(B_k\) has the algebraic generating function proved in
Theorem~\ref{thm:B-algebraic}, while \(Q_k\) is P-recursive by
Corollary~\ref{cor:Q-holonomic}.  The class of D-finite series is closed under
finite linear combinations, so the twelve formulas give the first assertion.
The Catalan series and \(B(z)\) are algebraic, and algebraic series are closed
under finite sums and scalar multiplication, giving the second assertion.
\end{proof}

\section[Descents and gamma-positivity]{Descents and \(\gamma\)-positivity}
\label{sec:gamma}
The total enumeration does not determine the descent distribution, because the relabeling $\Phi_\sigma$ changes comparisons between adjacent row letters.  For the Catalan components, however, this change is a constant shift depending only on the underlying permutation.

For a fixed component, put
\[
D^{\sigma}_{\Lambda,k}(x)
 =\sum_{\pi\in\Canon_3^{k,\sigma}(\Lambda)}x^{\des(\pi)}.
\]

\begin{lemma}
\label{lem:catalan-descent-shift}
Let \(P\in\Dyck_k\), and let \(v\) be its binary lattice word.
If \(w=1^kv\), where \(v\) uses the row letters \(2,3\), or if
\(w=v3^k\), where \(v\) uses the row letters \(1,2\), then for every
\(\sigma\in\Sym_3\),
\begin{equation}\label{eq:catalan-descent-shift}
\des(\Phi_\sigma(w))=\val(P)+\des(\sigma).
\end{equation}
Consequently, every nonempty Catalan component has descent polynomial
\[
x^{\des(\sigma)}N_k(x).
\]
\end{lemma}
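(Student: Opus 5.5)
The plan is to count descents of \(\Phi_\sigma(w)\) directly, adjacency by adjacency. The key observation is that whether a position is a descent of \(\Phi_\sigma(w)\) depends only on the ordered pair of row letters \((w_t,w_{t+1})\). Adjacencies with equal letters never give descents. An adjacency \((a,b)\) with \(a\neq b\) gives a descent exactly when \(\sigma_a>\sigma_b\). So I would classify the adjacent pairs of \(w\) by type and count how many of each type occur, in terms of the Dyck path \(P\).

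For the form \(w=1^kv\), where \(v\) uses the row letters \(2<3\) with \(2\mapsto U\) and \(3\mapsto D\):
\begin{enumerate}
\item Inside \(1^k\) there are no descents.
\item At the junction, since \(k\ge1\), the last \(1\) is followed by the first letter of \(v\). That letter is \(2\), because a Dyck path starts with \(U\). This junction contributes \([\sigma_1>\sigma_2]\).
\item Inside \(v\), the unequal adjacencies are the factors \(23\), which are peaks \(UD\), and the factors \(32\), which are valleys \(DU\).
\end{enumerate}
A Dyck path of semilength \(k\ge1\) has exactly \(\val(P)+1\) peaks, since peaks and valleys alternate and the path begins with \(U\) and ends with \(D\). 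Hence
\[
\des(\Phi_\sigma(w))=[\sigma_1>\sigma_2]+(\val(P)+1)[\sigma_2>\sigma_3]+\val(P)[\sigma_3>\sigma_2].
\]
Exactly one of the brackets \([\sigma_2>\sigma_3]\) and \([\sigma_3>\sigma_2]\) equals \(1\). So this simplifies to \(\val(P)+[\sigma_1>\sigma_2]+[\sigma_2>\sigma_3]=\val(P)+\des(\sigma)\).

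The form \(w=v3^k\) with \(v\) on \(\{1,2\}\) is symmetric. The \(12\) factors in \(v\) number \(\val(P)+1\) and contribute \([\sigma_1>\sigma_2]\) each. The \(21\) factors number \(\val(P)\) and contribute \([\sigma_2>\sigma_1]\) each. The final letter of \(v\) is \(2\), because the path ends with \(D\), so the junction with \(3^k\) contributes \([\sigma_2>\sigma_3]\). The trailing block contributes nothing. The same cancellation gives \(\val(P)+\des(\sigma)\). This proves \eqref{eq:catalan-descent-shift}.

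For the consequence, take a nonempty Catalan component \(\Canon_3^{k,\sigma}(\Lambda)\). By Theorem~\ref{thm:fixed-underlying-reduction} it equals \(\Phi_\sigma(\Lat_3^k(\sigma^{-1}\Lambda))\). By Theorem~\ref{thm:six-state}(iii) together with Theorem~\ref{thm:block-specifications}(i)/(ii), the set \(\Lat_3^k(\sigma^{-1}\Lambda)\) is exactly the set of words \(1^kv\), or exactly the set of words \(v3^k\), with \(v\) ranging bijectively over \(\Dyck_k\). Summing \(x^{\val(P)+\des(\sigma)}\) over \(P\in\Dyck_k\) gives \(x^{\des(\sigma)}N_k(x)\).

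The only delicate step is the peak/valley bookkeeping: confirming that the number of peaks is \(\val(P)+1\), and that the junction letters are forced by the first and last steps of the path, both of which use \(k\ge1\). Everything else is mechanical.
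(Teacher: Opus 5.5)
Your proof is correct and follows essentially the same route as the paper: classify the adjacent pairs of row letters, note that the junction with the constant block is forced by the first or last step of the path, use that a nonempty Dyck path has one more peak than valley, and sum over \(\Dyck_k\). Your single bracket formula \([\sigma_1>\sigma_2]+(\val(P)+1)[\sigma_2>\sigma_3]+\val(P)[\sigma_3>\sigma_2]\) is just a compact version of the paper's case split on \(\sigma_2\) versus \(\sigma_3\) (respectively \(\sigma_1\) versus \(\sigma_2\)). You are also somewhat more explicit than the paper in justifying why a nonempty Catalan component is exactly the image of the \(1^kv\) or \(v3^k\) words.
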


\begin{proof}
    Firstly, consider \(w=1^kv\).  The boundary between the constant block and the
binary word contributes a descent exactly when \(\sigma_1>\sigma_2\).  Within
\(v\), every transition from $3$ to $2$ is a valley and every transition $2$ to $3$ is a peak.  A nonempty Dyck path has one more peak than valley.  If
\(\sigma_2<\sigma_3\), the valleys become descents, so the total is
\[
\val(P)+\ind_{\sigma_1>\sigma_2}.
\]
If \(\sigma_2>\sigma_3\), only the peaks become descents, so the total is
\[
\val(P)+1+\ind_{\sigma_1>\sigma_2}.
\]
In both cases this is \(\val(P)+\des(\sigma)\).

For \(w=v3^k\), the same argument uses the transitions from $1$ to $2$ and
$2$ to $1$, together with the final boundary \(\sigma_2\sigma_3\).  If
\(\sigma_1<\sigma_2\), the valleys contribute descents and if
\(\sigma_1>\sigma_2\), the peaks contribute descents.  The result is once  again
\(\val(P)+\des(\sigma)\).  Summing over \(P\in\Dyck_k\) gives the required
polynomial.
\end{proof}
\begin{corollary}
\label{cor:four-catalan-descents}
For every \(k\geq1\),
\begin{align}
D_{\{213,312\},k}(x)
 &=D_{\{132,231\},k}(x)=(1+x)^2N_k(x),
 \label{eq:middle-extremum-descent}\\
D_{\{123,321\},k}(x)&=4xN_k(x).
 \label{eq:monotone-pair-descent}
\end{align}
\end{corollary}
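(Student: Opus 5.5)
The plan is to decompose each descent polynomial over the six underlying permutations. By Theorem~\ref{thm:fixed-underlying-reduction}, the component of $\Canon_3^k(\Lambda)$ with underlying permutation $\sigma$ is $\Phi_\sigma(\Lat_3^k(\sigma^{-1}\Lambda))$. Corollary~\ref{cor:four-catalan-count} tells us which components are nonempty: each nonempty one is a Catalan component, given by a forced block plus a binary lattice word, and the others are empty. Lemma~\ref{lem:catalan-descent-shift} then shows that each nonempty component contributes $x^{\des(\sigma)}N_k(x)$. So
\[
D_{\Lambda,k}(x)=N_k(x)\sum_{\sigma}x^{\des(\sigma)},
\]
where the sum runs over the four listed underlying permutations.

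The computation then splits into three cases.

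\begin{itemize}
\item For $\{213,312\}$ the underlying permutations are $123,132,231,321$. Their descent numbers are $0,1,1,2$, so the sum is $1+2x+x^2=(1+x)^2$.
\item For $\{132,231\}$ the underlying permutations are $123,213,312,321$. Their descent numbers are again $0,1,1,2$, giving $(1+x)^2$ as well.
\item For $\{123,321\}$ the underlying permutations are $132,213,231,312$. Each has exactly one descent, so the sum is $4x$.
\end{itemize}

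Multiplying by $N_k(x)$ gives \eqref{eq:middle-extremum-descent} and \eqref{eq:monotone-pair-descent}.

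The only step needing care is checking that Lemma~\ref{lem:catalan-descent-shift} applies to every nonempty component. That lemma is stated for words of the form $1^kv$ or $v3^k$ with arbitrary $\sigma$. By Theorem~\ref{thm:block-specifications}(i),(ii), every word of a Catalan component $\Lat_3^k(\sigma^{-1}\Lambda)$ has exactly one of these forms, according to whether $132$ or $213$ lies in $\sigma^{-1}\Lambda$, and the Dyck path $P$ ranges over all of $\Dyck_k$. So each component's polynomial is indeed $x^{\des(\sigma)}N_k(x)$, regardless of which forced block occurs.

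The main obstacle is essentially bookkeeping: we must confirm the component lists in Corollary~\ref{cor:four-catalan-count}. As a sanity check, I would recompute $\sigma^{-1}\Lambda$ for one or two values of $\sigma$. For example, take $\Lambda=\{123,321\}$ and $\sigma=132$. Then $\sigma^{-1}\Lambda=\{132,312\}$, which contains $132$ but not $123$, so this component is Catalan, as the corollary asserts.
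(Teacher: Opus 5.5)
Your proposal is correct and is exactly the paper's argument: sum the shifted Narayana polynomials $x^{\des(\sigma)}N_k(x)$ from Lemma~\ref{lem:catalan-descent-shift} over the four nonempty Catalan components listed in Corollary~\ref{cor:four-catalan-count}. Your optional sanity check has a small slip, since for $\sigma=132$ one gets $\sigma^{-1}\Lambda=\{\sigma^{-1}\circ 123,\sigma^{-1}\circ 321\}=\{132,231\}$ rather than $\{132,312\}$ (you composed on the wrong side), but the conclusion that this component is Catalan is unchanged because neither $123$ nor $213$ appears.
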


\begin{proof}
For \(\{213,312\}\), Corollary~\ref{cor:four-catalan-count} gives the
underlying permutations
\(123,132,231,321\), whose descent numbers are \(0,1,1,2\).  The preceding
lemma therefore gives
\[
(1+2x+x^2)N_k(x)=(1+x)^2N_k(x).
\]
The class \(\{132,231\}\) has underlying permutations
\(123,213,312,321\), with the same descent-number multiset.  For
\(\{123,321\}\), all four underlying permutations
\(132,213,231,312\) have one descent, giving \(4xN_k(x)\).
\end{proof}
\begin{corollary}
\label{cor:four-catalan-gamma}
The avoidance classes of $\{213,312\},\{132,231\}$ have the expansion
\[
(1+x)^2N_k(x)
 =\sum_{i=0}^{\lfloor(k-1)/2\rfloor}
 \binom{k-1}{2i}C_i\,x^i(1+x)^{k+1-2i}.
\]
Thus their \(i\)-th \(\gamma\)-coefficient is
\[
\gamma_{k,i}=\binom{k-1}{2i}C_i.
\]
\end{corollary}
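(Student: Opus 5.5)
The plan is to derive this expansion from the Narayana $\gamma$-expansion \eqref{eq:narayana-gamma}, which the paper quotes, together with the descent formula of Corollary~\ref{cor:four-catalan-descents}.

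By \eqref{eq:middle-extremum-descent}, both classes have descent polynomial $(1+x)^2N_k(x)$. Multiplying each term of \eqref{eq:narayana-gamma} by $(1+x)^2$ raises the exponent $k-1-2i$ to $k+1-2i$. This gives exactly the displayed sum, with the same summation range $0\le i\le\lfloor(k-1)/2\rfloor$.

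It remains to check that this sum really is the $\gamma$-expansion \eqref{eq:gamma-expansion} of a palindromic polynomial of degree $d=k+1$, so that the coefficients can be read off by uniqueness.

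\begin{itemize}
\item For $k\ge1$, $N_k$ is palindromic of degree $k-1$: its coefficients $\frac1k\binom{k}{j}\binom{k}{j+1}$ are invariant under $j\mapsto k-1-j$.
\item Multiplying by $(1+x)^2$ therefore gives a palindromic polynomial of degree $k+1$.
\item Each summand $x^i(1+x)^{k+1-2i}$ is palindromic of degree $k+1$.
\item Since $i\le\lfloor(k-1)/2\rfloor\le\lfloor(k+1)/2\rfloor$, the indices used lie inside the allowed range of \eqref{eq:gamma-expansion}.
\end{itemize}

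The basis $\{x^i(1+x)^{d-2i}\}_{0\le i\le\lfloor d/2\rfloor}$ is triangular with respect to the lowest-degree term, so the expansion is unique. Hence $\gamma_{k,i}=\binom{k-1}{2i}C_i$ for $0\le i\le\lfloor(k-1)/2\rfloor$. For $k$ even, the top index $\lfloor(k+1)/2\rfloor=k/2$ gets $\gamma_{k,k/2}=0$, which agrees with the formula since $\binom{k-1}{k}=0$.

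Nothing here is difficult. The only real content is \eqref{eq:narayana-gamma}, which the paper takes as a standard known identity; if a proof were demanded, I would cite the valley-hopping / two-coloured Motzkin argument.
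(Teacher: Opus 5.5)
Your proposal is correct and follows the paper's proof, which simply multiplies the Narayana $\gamma$-expansion \eqref{eq:narayana-gamma} by $(1+x)^2$; your palindromicity and uniqueness check is a harmless elaboration. One small slip: the extra top index $\lfloor(k+1)/2\rfloor=\lfloor(k-1)/2\rfloor+1$ with vanishing coefficient occurs for every $k$, not only for even $k$, and the formula $\binom{k-1}{2i}C_i$ gives $0$ there in both parities.
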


\begin{proof}
Multiply the Narayana expansion \eqref{eq:narayana-gamma} by \((1+x)^2\).
\end{proof}

\begin{proposition}
\label{prop:explicit-positive-formulas}
For every \(k\geq1\), the following identities hold:
\begin{center}
\small
\begin{tabularx}{\textwidth}{>{\raggedright\arraybackslash}p{0.56\textwidth}X}
\toprule
Forbidden set \(\Lambda\) & \(D_{\Lambda,k}(x)\)\\
\midrule
\(\{132,231\}\), \(\{213,312\}\)
& \((1+x)^2N_k(x)\)\\
\(\{132,231,312,321\}\), \(\{213,231,312,321\}\)
& \((1+x)N_k(x)\)\\
\(\{132,213,231,321\}\), \(\{132,213,312,321\}\)
& \(1+x\)\\
\(\Sym_3\setminus\{123\}\)
& \(1\)\\
\(\Sym_3\)
& \(0\)\\
\bottomrule
\end{tabularx}
\end{center}
The unrestricted polynomial \(D_{\varnothing,k}(x)\) is \(\gamma\)-positive
by the canon-poset factorization of Beck and Deligeorgaki
\cite{BeckDeligeorgaki}.
\end{proposition}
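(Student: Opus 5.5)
The plan is to compute each row component by component. For a given $\Lambda$, Theorem~\ref{thm:fixed-underlying-reduction} splits $D_{\Lambda,k}(x)$ as $\sum_{\sigma}D^{\sigma}_{\Lambda,k}(x)$, and $\Phi_\sigma$ identifies the $\sigma$-component with $\Lat_3^k(\sigma^{-1}\Lambda)$. For each listed $\Lambda$ I will compute the six sets $\sigma^{-1}\Lambda$ and classify them with Theorem~\ref{thm:six-state}. Because none of the listed sets contains a $B_k$, $Q_k$ or $T_k$ component, every nonzero component is either a Catalan component or the singleton $\{1^k2^k3^k\}$.

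\begin{itemize}
\item Each Catalan component contributes $x^{\des(\sigma)}N_k(x)$, by Lemma~\ref{lem:catalan-descent-shift}.
\item The singleton component contributes $x^{\des(\Phi_\sigma(1^k2^k3^k))}=x^{\des(\sigma)}$. This holds because $\Phi_\sigma(1^k2^k3^k)=\sigma_1^k\sigma_2^k\sigma_3^k$, whose only possible descents are the two block boundaries.
\end{itemize}

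The first row is Corollary~\ref{cor:four-catalan-descents}. For $\{213,231,312,321\}$, the four-pattern table in the proof of Theorem~\ref{thm:twelve-class-enumeration} already gives the multiset of components as $C_k,C_k,0,0,0,0$. It remains to identify which two $\sigma$ give the Catalan components. These are the $\sigma$ for which $\sigma^{-1}\Lambda$ omits $123$ and contains exactly one of $132,213$. Since $123\notin\sigma^{-1}\Lambda$ means $\sigma\notin\Lambda$, the candidates are $\sigma\in\{123,132\}$.
\begin{itemize}
\item $\sigma=123$: the set is $\Lambda$ itself, which contains $213$ but not $132$. This is a Catalan component with $\des(\sigma)=0$.
\item $\sigma=132$: composing with $\sigma^{-1}=132$ sends $213\mapsto312$, $231\mapsto321$, $312\mapsto213$ and $321\mapsto231$. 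So $\sigma^{-1}\Lambda=\Lambda$, which again contains $213$ but not $132$. This gives a Catalan component with $\des(\sigma)=1$.
\end{itemize}
The total is $(1+x)N_k(x)$. The set $\{132,231,312,321\}$ is handled by the same check, with candidates $\sigma\in\{123,213\}$ and $\des$ values $0,1$. Reverse-complementation also pairs the two sets, and it preserves descent counts.

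For the rows $1+x$, the table gives components $1,1,0,0,0,0$. For $\Lambda=\{132,213,231,321\}$ the candidates are $\sigma\in\{123,312\}$.
\begin{itemize}
\item $\sigma=123$: the set is $\Lambda$, which contains both $132$ and $213$, so the component is a singleton with $\des=0$.
\item $\sigma=312$: here $\sigma^{-1}=231$, and composing gives $132\mapsto213$, $213\mapsto132$, $231\mapsto312$ and $321\mapsto123$. The image contains $123$, so this component is empty.
\end{itemize}
Redoing this carefully, the nonempty components are exactly the $\sigma$ for which $\sigma^{-1}\Lambda\supseteq\{132,213\}$ and $123\notin\sigma^{-1}\Lambda$. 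Equivalently, $\sigma\circ132,\ \sigma\circ213\in\Lambda$ and $\sigma\notin\Lambda$. I will tabulate these conditions to find the two surviving $\sigma$, and expect their descent numbers to be $0$ and $1$.

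For $\Sym_3\setminus\{123\}$, only $\sigma=123$ survives, and it gives the singleton $1^k2^k3^k$ with polynomial $1$. For $\Sym_3$, every component contains $123$, so $D=0$. The closing remark about $D_{\varnothing,k}$ is cited from Beck and Deligeorgaki and needs no proof here.

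The main obstacle is bookkeeping: computing the compositions $\sigma^{-1}\circ\tau$ correctly, with the convention fixed in Theorem~\ref{thm:fixed-underlying-reduction}, and then reading off the $\sigma$-dependent descent shifts. I will organise this as a single $6\times6$ multiplication table of $\Sym_3$ and read every row of the proposition from it.
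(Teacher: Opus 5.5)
Your strategy is the paper's: split $D_{\Lambda,k}(x)$ into the six $\sigma$-components, use Lemma~\ref{lem:catalan-descent-shift} for the Catalan pieces and $\des(\sigma_1^k\sigma_2^k\sigma_3^k)=\des(\sigma)$ for the singleton pieces, then identify which $\sigma$ survive. The first two rows and the last two rows are handled correctly. In particular, your checks that $\sigma^{-1}\Lambda=\Lambda$ for $\sigma=132$, and for $\sigma=213$, are right.

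The $1+x$ row, however, is not established as written.

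\textbf{The composition error.} For $\sigma=312$ you correctly have $\sigma^{-1}=231$, which is the relabeling $1\mapsto2$, $2\mapsto3$, $3\mapsto1$. This relabeling sends $213\mapsto321$ and $321\mapsto132$, not to $132$ and $123$ as you wrote. Hence
\[
\sigma^{-1}\{132,213,231,321\}=\{132,213,312,321\}.
\]
This set omits $123$ and contains both $132$ and $213$. So the $312$-component is the singleton $\{3^k1^k2^k\}$, not empty. Your conclusion ``the image contains $123$'' also contradicts your own criterion $123\in\sigma^{-1}\Lambda\iff\sigma\in\Lambda$, since $312\notin\Lambda$.

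\textbf{What is left unproved.} After the error you stop at ``expect their descent numbers to be $0$ and $1$'', which is not a proof. The second set $\{132,213,312,321\}$ of that row is also never treated.

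\textbf{The fix.} It is short, and it matches the paper's proof.
\begin{enumerate}
\item A component is nonempty if and only if $\sigma\notin\Lambda$. If $\sigma\in\Lambda$, the component is empty by Lemma~\ref{lem:123-unavoidable}. If $\sigma\notin\Lambda$, the word $1^k2^k3^k$ contains only the pattern $123$, so it survives.
\item For $\{132,213,231,321\}$ the survivors are therefore $\sigma\in\{123,312\}$.
\item For $\{132,213,312,321\}$ the survivors are $\sigma\in\{123,231\}$. You can see this either by the analogous computation with $\sigma=231$, or by reverse-complementation, which swaps the two sets and preserves descents.
\item In every case $\sigma^{-1}\Lambda\supseteq\{132,213\}$, so each surviving component is the singleton $1^k2^k3^k$. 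Their descent numbers are $0$ and $1$, giving $1+x$.
\end{enumerate}
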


\begin{proof}
The first row is Corollary~\ref{cor:four-catalan-descents}.  For
\(\Lambda=\{132,231,312,321\}\), the only nonempty components have underlying
permutations \(123\) and \(213\); both are Catalan components, with shifts zero
and one.  For \(\Lambda=\{213,231,312,321\}\), the corresponding underlying
permutations are \(123\) and \(132\).  Lemma~\ref{lem:catalan-descent-shift}
therefore gives \((1+x)N_k(x)\) in both cases.

For \(\{132,213,231,321\}\), the two underlying components are the singleton word
\(1^k2^k3^k\) relabeled by \(123\) and \(312\); for
\(\{132,213,312,321\}\), they are relabeled by \(123\) and \(231\).
The descent shifts are zero and one, giving \(1+x\).  If all patterns except
\(123\) are forbidden, only the increasing underlying permutation survives,
and if every pattern is forbidden, no component survives.
\end{proof}

At
\(k=1\), a canon permutation is simply an element of \(\Sym_3\), and hence
\begin{equation}\label{eq:k1-polynomial}
D_{\Lambda,1}(x)=\sum_{\sigma\in\Sym_3\setminus\Lambda}x^{\des(\sigma)}.
\end{equation}
There are only five three-row lattice words at \(k=2\):
\begin{equation}\label{eq:five-lattice-words}
112233,\quad112323,\quad121233,\quad121323,\quad123123.
\end{equation}
Relabeling these by the six underlying permutations gives a direct exhaustive
calculation of every \(D_{\Lambda,2}(x)\).

\begin{lemma}
\label{lem:two-level-census}
Exactly seventeen forbidden sets have a \(\gamma\)-positive polynomial at
\(k=1\).  Their \(k=2\) polynomials are listed below; the rows marked
``True'' are precisely the eight restricted sets in
Proposition~\ref{prop:explicit-positive-formulas}, together with the
unrestricted set.
\begin{center}
\scriptsize
\setlength{\tabcolsep}{3pt}
\begin{tabularx}{0.99\textwidth}{>{\raggedright\arraybackslash}X
 >{\raggedright\arraybackslash}p{0.30\textwidth}c}
\toprule
\(\Lambda\) & \(D_{\Lambda,2}(x)\) & \(\gamma\)-positive?\\
\midrule
\(\varnothing\) & \(1+7x+14x^2+7x^3+x^4\) & True\\
\(\{132\}\) or \(\{213\}\) & \(1+4x+8x^2+4x^3\) & False\\
\(\{231\}\) or \(\{312\}\) & \(1+5x+8x^2+3x^3\) & False\\
\(\{132,213\}\) & \(1+2x+3x^2+3x^3\) & False\\
\(\{132,231\}\) or \(\{213,312\}\)
 & \(1+3x+3x^2+x^3\) & True\\
\(\{213,231\}\) or \(\{132,312\}\)
 & \(1+3x+6x^2+2x^3\) & False\\
\(\{231,312\}\) & \(1+4x+4x^2\) & False\\
\(\{132,213,231,321\}\) or \(\{132,213,312,321\}\)
 & \(1+x\) & True\\
\(\{132,231,312,321\}\) or \(\{213,231,312,321\}\)
 & \(1+2x+x^2\) & True\\
\(\Sym_3\setminus\{123\}\) & \(1\) & True\\
\(\Sym_3\) & \(0\) & True\\
\bottomrule
\end{tabularx}
\end{center}
There are five additional classes that are palindromic at \(k=1\).  At
\(k=2\),
\begin{align*}
D_{\{132,213,231\},2}(x)
 =D_{\{132,213,312\},2}(x)&=1+x+2x^2+x^3,\\
D_{\{132,231,312\},2}(x)
 =D_{\{213,231,312\},2}(x)&=1+2x+2x^2,\\
D_{\{132,213,231,312\},2}(x)&=1+x^2.
\end{align*}
Thus only the final class remains palindromic, and its \(\gamma\)-vector is
\((1,-2)\).
\end{lemma}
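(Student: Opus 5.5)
This is a finite census, so the plan is to verify it exhaustively in two stages, using the reductions already in place.

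\emph{Stage one ($k=1$).} By \eqref{eq:k1-polynomial}, $D_{\Lambda,1}(x)=\sum_{\sigma\notin\Lambda}x^{\des(\sigma)}$. The multiset of descent numbers of $\Sym_3$ is
\[
123\mapsto0,\quad 132,213,231,312\mapsto1,\quad 321\mapsto2 .
\]
Hence $D_{\Lambda,1}(x)=a+bx+cx^2$, where $a=\ind_{123\notin\Lambda}$, $c=\ind_{321\notin\Lambda}$, and $b\in\{0,\dots,4\}$ counts the surviving one-descent permutations.

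For such a polynomial:
\begin{enumerate}
\item If $a=c=1$, it is palindromic of degree $2$ with $\gamma$-vector $(1,b-2)$, so it is $\gamma$-positive iff $b\ge2$.
\item If $a=c=0$, it equals $bx$. This is palindromic of degree $2$ with $\gamma_0=0,\gamma_1=b$, hence always $\gamma$-positive. (This degree convention has to be fixed for $x$-shifted polynomials; I would adopt it, since the list includes $4xN_k$-type classes only later.)
\item If $a=1,c=0$, palindromicity forces degree $\le1$: it is $1+x$ for $b=1$, or $1$ for $b=0$.
\item If $a=0,c=1$, the polynomial $bx+x^2$ is never palindromic, since $x^2$ alone is excluded by the nonzero-constant convention. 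I would check this against the paper's implicit convention.
\end{enumerate}

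Counting subsets in each case should give exactly seventeen. I would then cross-check against the table: for instance, $\varnothing$, the singletons $\{132\}$ etc.\ (which have $b=3$), pairs not involving $123,321$, and so on. This stage is the main obstacle. It is purely a matter of bookkeeping, but the conventions on degree for zero-constant-term polynomials and on the zero polynomial decide which sets count. I would settle them by matching the listed seventeen sets, including $\Sym_3$ with polynomial $0$, and the five ``additional palindromic'' sets, which should be exactly those palindromic at $k=1$ but with $b<2$ in case (i).

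\emph{Stage two ($k=2$).} For each listed $\Lambda$ I would compute $D_{\Lambda,2}(x)=\sum_\sigma\sum_{w}x^{\des(\Phi_\sigma(w))}$. The inner sum runs over the words $w$ among the five in \eqref{eq:five-lattice-words} that avoid $\sigma^{-1}\Lambda$, using Theorem~\ref{thm:fixed-underlying-reduction}. To save work I would use Theorem~\ref{thm:six-state}: each component is empty, the single word $112233$, a Catalan component (handled by Lemma~\ref{lem:catalan-descent-shift}, giving $x^{\des\sigma}(1+x)$), the $B_2$ class (the four words other than $123123$), the $Q_2$ class (all five words, since $321$ cannot occur when $k=2$), or everything. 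So only the $B$-, $Q$- and $T$-type components need a direct descent count of the five relabeled words, which is a finite table of $6\times5$ descent numbers.

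\emph{Conclusion.} Finally I would test each resulting quartic or cubic for palindromicity and compute its $\gamma$-vector. The rows marked True agree with Proposition~\ref{prop:explicit-positive-formulas} at $k=2$, since $N_2=1+x$. For the false rows, asymmetry of the coefficients shows non-palindromicity. For $\{132,213,231,312\}$ the value $1+x^2=(1+x)^2-2x$ gives the $\gamma$-vector $(1,-2)$.
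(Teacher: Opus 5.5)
Your two-stage plan is the paper's own: the descent multiset at $k=1$, then exhaustive relabeling of the five words in \eqref{eq:five-lattice-words} at $k=2$. The stage-two shortcut via Theorem~\ref{thm:six-state} and Lemma~\ref{lem:catalan-descent-shift} is sound. No word of $\Lat_3^2$ contains $321$, the $B$-class consists of the four words other than $123123$, and only the $B$-, $Q$- and $T$-components need a direct count.

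The gap is in stage one, which is where ``seventeen'' comes from. In case (ii) you declare $bx$ palindromic of degree $2$ with $\gamma$-vector $(0,b)$, hence always $\gamma$-positive. Then every one of the $2^4=16$ sets containing $\{123,321\}$ is $\gamma$-positive at $k=1$, and your own case analysis gives $11+16+5+0=32$ sets, not $17$. For instance $\{123,321\}$, with $D_{\{123,321\},1}(x)=4x$, would belong in the table. Worse, $D_{\{123,321\},k}(x)=4xN_k(x)$ by Corollary~\ref{cor:four-catalan-descents}, so under your convention it would be $\gamma$-positive for every $k$, contradicting Theorem~\ref{thm:uniform-gamma-iff}. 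Your parenthetical therefore has it backwards: the $x$-shifted classes must \emph{not} count. Case (iv) also silently switches conventions. If $d>\deg f$ is allowed as in (ii), then $x+x^2$ is palindromic of degree $3$ with $\gamma$-vector $(0,1)$, and $x^2$ is palindromic of degree $4$ with $\gamma$-vector $(0,0,1)$, which would add five more sets.

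The repair is to fix the convention up front rather than ``by matching the listed sets''. Read the definition with $d=\deg f$, so that a nonzero palindromic polynomial has $a_0=a_{\deg f}\neq0$; this is the convention you actually use in (iv). Applying it in every case gives:
\begin{enumerate}
\item case (i) contributes the eleven subsets of $\{132,213,231,312\}$ of size at most two;
\item case (ii) contributes only $\Lambda=\Sym_3$, the zero polynomial, which is $\gamma$-positive by convention;
\item case (iii) contributes the four sets with polynomial $1+x$ together with $\Sym_3\setminus\{123\}$;
\item case (iv) contributes nothing.
\end{enumerate}
This gives $11+1+5=17$, and the five subsets of $\{132,213,231,312\}$ of size three or four are exactly the palindromic but non-$\gamma$-positive sets. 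With that correction the rest of your argument goes through as in the paper, including the $k=2$ table and the $\gamma$-vector $(1,-2)$ of $1+x^2$.
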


\begin{proof}
Equation~\eqref{eq:k1-polynomial} reduces the first level to the descent
multiset
\[
\des(123)=0,\qquad
\des(132)=\des(213)=\des(231)=\des(312)=1,\qquad
\des(321)=2.
\]
Checking the palindromicity and \(\gamma\)-coefficients of the resulting
quadratic, linear, constant, or zero polynomials leaves exactly the seventeen
sets displayed in the first table and the five additional palindromic sets.

For \(k=2\), every canon permutation is obtained uniquely by applying one of
the six maps \(\Phi_\sigma\) to one of the five words in
\eqref{eq:five-lattice-words}.  Testing all subsequences that use one copy of each row letter and recording
adjacent descents gives the
polynomials displayed above. The entries of the last column follow by symmetry of coefficients and the unique \(\gamma\)-expansion.
\end{proof}

\begin{theorem}
\label{thm:uniform-gamma-iff}
For \(\Lambda\subseteq\Sym_3\), the following are equivalent:
\begin{enumerate}
\item \(D_{\Lambda,k}(x)\) is \(\gamma\)-positive for every \(k\geq1\);
\item both \(D_{\Lambda,1}(x)\) and \(D_{\Lambda,2}(x)\) are
\(\gamma\)-positive;
\item
\[
\begin{split}
\Lambda\in\bigl\{&\varnothing,
\{132,231\},\{213,312\},
\{132,213,231,321\},\{132,213,312,321\},\\
&\{132,231,312,321\},\{213,231,312,321\},
\Sym_3\setminus\{123\},\Sym_3\bigr\}.
\end{split}
\]
\end{enumerate}
\end{theorem}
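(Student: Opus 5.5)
We prove the cycle of implications (i)\(\Rightarrow\)(ii)\(\Rightarrow\)(iii)\(\Rightarrow\)(i).

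The implication (i)\(\Rightarrow\)(ii) is immediate, since (ii) is just the cases \(k=1\) and \(k=2\) of (i).

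For (ii)\(\Rightarrow\)(iii) we use Lemma~\ref{lem:two-level-census} directly.
\begin{itemize}
\item[--] If \(D_{\Lambda,1}(x)\) is \(\gamma\)-positive, then \(\Lambda\) is one of the seventeen sets listed in its first table.
\item[--] Among these, the \(k=2\) polynomials are \(\gamma\)-positive exactly in the rows marked ``True''. These rows are the nine sets in (iii).
\item[--] For the rows marked ``False'', I would record an explicit obstruction. In most cases this is failure of palindromicity, e.g.\ \(1+4x+8x^2+4x^3\) for \(\{132\}\). Such a polynomial admits no \(\gamma\)-expansion, so it is not \(\gamma\)-positive in the convention of the paper.
\end{itemize}
This requires only reading off the census; the census computations themselves are assumed from that lemma.

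For (iii)\(\Rightarrow\)(i) we go through the list in (iii).
\begin{itemize}
\item[--] \emph{Unrestricted set.} \(D_{\varnothing,k}(x)\) is \(\gamma\)-positive by the Beck--Deligeorgaki result cited in Proposition~\ref{prop:explicit-positive-formulas}.
\item[--] \emph{The remaining eight sets.} Proposition~\ref{prop:explicit-positive-formulas} gives the closed forms \((1+x)^2N_k(x)\), \((1+x)N_k(x)\), \(1+x\), \(1\) and \(0\).
\begin{itemize}
\item[\(\circ\)] The polynomials \(1+x\), \(1\) and \(0\) are trivially \(\gamma\)-positive. For \(0\) this uses the stated convention.
\item[\(\circ\)] For \((1+x)^mN_k(x)\) with \(m\in\{1,2\}\), multiply the Narayana \(\gamma\)-expansion \eqref{eq:narayana-gamma} by \((1+x)^m\), exactly as in Corollary~\ref{cor:four-catalan-gamma}. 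Each term \(x^i(1+x)^{k-1-2i}\) becomes \(x^i(1+x)^{k-1+m-2i}\). The result is therefore palindromic of degree \(k-1+m\), with the same nonnegative coefficients \(\binom{k-1}{2i}C_i\). By uniqueness of the \(\gamma\)-expansion, these are its \(\gamma\)-coefficients.
\end{itemize}
\end{itemize}

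The only delicate point is the degree bookkeeping for small \(k\), in particular \(k=1\). There \(N_1=1\), so \((1+x)^2N_1\) should be read as palindromic of degree \(2\), and \((1+x)N_1\) as palindromic of degree \(1\). Both are consistent with the \(k=1\) data, e.g.\ \(1+2x+x^2\).

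The main obstacle is conceptual rather than computational. The equivalence with (ii) relies entirely on the exhaustive census of Lemma~\ref{lem:two-level-census}, and I would trust that lemma for this step. If one wanted independence from it, the natural check is the \(k=1\) filter. The sets whose \(k=1\) polynomial is palindromic but not \(\gamma\)-positive, such as \(1+x^2\), are excluded at \(k=1\) or \(k=2\), and the five extra palindromic classes fail palindromicity or positivity at \(k=2\), as displayed at the end of that lemma.
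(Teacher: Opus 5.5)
Your proposal is correct and follows the paper's own proof essentially verbatim. As in the paper, (i)$\Rightarrow$(ii) is trivial, (ii)$\Rightarrow$(iii) is read off from the census in Lemma~\ref{lem:two-level-census}, and (iii)$\Rightarrow$(i) combines Proposition~\ref{prop:explicit-positive-formulas}, the Narayana expansion \eqref{eq:narayana-gamma}, and the Beck--Deligeorgaki result.

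One small correction to your closing aside: the five extra palindromic classes are already eliminated at $k=1$. Their polynomials $1+x+x^2$ and $1+x^2$ have $\gamma_1=-1$ and $\gamma_1=-2$, so their $k=2$ values play no role here; they matter only for Theorem~\ref{thm:uniform-palindromicity}.
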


\begin{proof}
The implication (i)\(\Rightarrow\)(ii) is immediate.  By
Lemma~\ref{lem:two-level-census}, condition (ii) leaves exactly the nine sets
in (iii).  Proposition~\ref{prop:explicit-positive-formulas}, the Narayana
expansion \eqref{eq:narayana-gamma}, and the unrestricted result of Beck and
Deligeorgaki show that every set in (iii) is \(\gamma\)-positive for all
\(k\), proving (iii)\(\Rightarrow\)(i).
\end{proof}

\begin{theorem}
\label{thm:uniform-palindromicity}
The polynomial \(D_{\Lambda,k}(x)\) is palindromic for every \(k\geq1\) if
and only if \(\Lambda\) is one of the nine sets in
Theorem~\ref{thm:uniform-gamma-iff}, or
\[
\Lambda=\{132,213,231,312\}.
\]
For this unique additional class,
\[
D_{\Lambda,k}(x)=1+x^2=(1+x)^2-2x
\]
for every \(k\), so it is not \(\gamma\)-positive.
Moreover, uniform palindromicity is also witnessed at levels \(k=1,2\).
\end{theorem}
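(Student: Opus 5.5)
The plan mirrors the proof of Theorem~\ref{thm:uniform-gamma-iff}. First we reduce to the two lowest levels. If \(D_{\Lambda,k}(x)\) is palindromic for all \(k\), it is palindromic at \(k=1\) and \(k=2\). Palindromicity at \(k=1\) depends only on the descent multiset of \(\Sym_3\setminus\Lambda\) via \eqref{eq:k1-polynomial}. Lemma~\ref{lem:two-level-census} records that the sets palindromic at \(k=1\) are the seventeen \(\gamma\)-positive ones together with five additional ones. Among those seventeen, the \(k=2\) table shows that exactly the nine sets of Theorem~\ref{thm:uniform-gamma-iff} survive: the other eight have visibly non-palindromic \(k=2\) polynomials, for instance \(1+4x+8x^2+4x^3\) and \(1+4x+4x^2\). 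Among the five additional sets, only \(\{132,213,231,312\}\) stays palindromic at \(k=2\). This proves the necessity direction, and it also gives the claim that uniform palindromicity is witnessed already at \(k=1,2\).

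For sufficiency, the nine sets of Theorem~\ref{thm:uniform-gamma-iff} are \(\gamma\)-positive for every \(k\), and \(\gamma\)-positivity in the sense of \eqref{eq:gamma-expansion} presupposes a palindromic expansion. For the nonzero restricted ones this is also explicit: \((1+x)^2N_k\), \((1+x)N_k\), \(1+x\) and \(1\) are palindromic because \(N_k\) is. The zero polynomial is palindromic by convention, and the unrestricted set is handled by Beck--Deligeorgaki.

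It remains to compute \(D_{\Lambda,k}\) for \(\Lambda=\{132,213,231,312\}\) at every \(k\). We apply Theorem~\ref{thm:six-state} to each \(\sigma^{-1}\Lambda\). The complement of \(\Lambda\) is \(\{123,321\}\), so the complement of \(\sigma^{-1}\Lambda\) is \(\{\sigma^{-1}\circ123,\sigma^{-1}\circ321\}\). A component is nonempty only when \(123\notin\sigma^{-1}\Lambda\), i.e. when \(\sigma^{-1}\) sends one of \(123,321\) to \(123\); this forces \(\sigma\in\{123,321\}\).

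\begin{itemize}
\item For \(\sigma=123\), the set \(\sigma^{-1}\Lambda=\Lambda\) contains both \(132\) and \(213\), so by case (ii) the component is the single word \(1^k2^k3^k\) with zero descents.
\item For \(\sigma=321\), reversal-of-labels maps \(\Lambda\) to itself: \(132\mapsto312\), \(213\mapsto231\), and so on. Hence again the component is the single word \(1^k2^k3^k\), now relabeled to \(3^k2^k1^k\), which has exactly two descents.
\end{itemize}

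Thus \(D_{\Lambda,k}(x)=1+x^2\) for all \(k\). This is palindromic of degree \(2\), and its \(\gamma\)-vector \((1,-2)\) shows it is not \(\gamma\)-positive.

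The only delicate step is confirming that the relabeled set \((321)^{-1}\Lambda\) equals \(\Lambda\) and still contains \(132,213\) but not \(123\). This is a one-line composition check, so no real obstacle is expected beyond relying on the finite census in Lemma~\ref{lem:two-level-census}.
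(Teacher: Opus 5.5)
Your proposal is correct and follows essentially the same route as the paper. The $k=1,2$ census of Lemma~\ref{lem:two-level-census} gives necessity, $\gamma$-positivity (which presupposes palindromicity) covers the nine sets of Theorem~\ref{thm:uniform-gamma-iff}, and for $\{132,213,231,312\}$ only the singleton components with $\sigma\in\{123,321\}$ survive, giving $1+x^2$. Your explicit check that $123\notin\sigma^{-1}\Lambda$ exactly when $\sigma\notin\Lambda$, and that $(321)^{-1}\Lambda=\Lambda$, spells out a step the paper only asserts.
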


\begin{proof}
Lemma~\ref{lem:two-level-census} shows that the only class palindromic at both
levels but absent from the \(\gamma\)-positive list is
\(\Lambda=\{132,213,231,312\}\).  For this class, the only nonempty components
are singleton components with underlying permutations \(123\) and \(321\).  The
unique row word is \(1^k2^k3^k\), and relabeling it gives descent numbers zero
and two.  Hence the polynomial is \(1+x^2\) for every \(k\).  All other
classes fail palindromicity at level one or two, while the nine classes in
Theorem~\ref{thm:uniform-gamma-iff} are palindromic because they are
\(\gamma\)-positive.
\end{proof}

We finish the section with a Boolean-orbit interpretation of the
$\gamma$-coefficients.  Let $\mathcal M_n^{(2)}$ be the set of Motzkin paths of
length $n$ whose level steps have colours $H_1$ and $H_2$.  The following
recursive map is a bijection
\[
f_n:\mathcal M_n^{(2)}\longrightarrow\Dyck_{n+1}.
\]
Set $f_0(\varnothing)=UD$.  For $n\geq1$, use the first applicable case:
\begin{align*}
f_n(H_1M')&=U f_{n-1}(M')D,\\
f_n(H_2M')&=UD f_{n-1}(M'),\\
f_n(UM_1DM_2)&=U f_i(M_1)D f_{n-2-i}(M_2),
\end{align*}
where, in the last line, $UM_1D$ is the first-return block of the Motzkin path
and $M_1\in\mathcal M_i^{(2)}$.  The inverse is obtained from the first-return
decomposition of a nonempty Dyck path: a primitive path gives the first case,
a path beginning with $UD$ gives the second case, and all remaining paths give
the third case.  Hence a Dyck path of semilength $k$ corresponds to a
two-coloured Motzkin path of length $k-1$.

If $P=f_{k-1}(M)$, then
\begin{equation}\label{eq:motzkin-valley-weight}
\val(P)=u(M)+b(M),
\end{equation}
where $u(M)$ is the number of up-steps and $b(M)$ is the number of level steps
of colour $H_2$.  Indeed, the $H_1$ case creates no new valley, while the
$H_2$ case and the first-return case each create exactly one.  Forgetting the
level-step colours leaves a Motzkin skeleton with $i$ up-steps and
$k-1-2i$ level steps.

\begin{theorem}
\label{thm:motzkin-toggle-action}
Every restricted class in Theorem~\ref{thm:uniform-gamma-iff} admits a Boolean toggle action of an elementary abelian $2$-group whose orbit enumerators are the terms in its $\gamma$-expansion.
For $\Lambda=\{213,312\}$, there is a bijection
\[
\Canon_3^k(\Lambda)\longleftrightarrow\Dyck_k\times\{0,1\}^2
\]
under which
\[
\des(\pi)=\val(P)+\varepsilon_1+\varepsilon_2.
\]
The two external bits encode
\[
(0,0)\leftrightarrow123,
\quad(1,0)\leftrightarrow132,
\quad(0,1)\leftrightarrow231,
\quad(1,1)\leftrightarrow321.
\]
Via the two-coloured Motzkin encoding, the group
$\left(\mathbb Z/2\mathbb Z\right)^{k+1}$ acts by toggling the colour of a level step at
each of the $k-1$ positions where such a step is present, fixing a path when
the selected position is not level, and by toggling the two external bits.
If the uncoloured Motzkin path has $i$ up-steps, its orbit enumerator is
\[
x^i(1+x)^{k+1-2i}.
\]
The $i$-th $\gamma$-coefficient is the number of uncolored Motzkin paths of length $k-1$ with 
$i$ up steps. The reverse-complement class $\{132,231\}$ has the same action.  The classes
with polynomial $(1+x)N_k(x)$ use one external toggle; the classes with
polynomial $1+x$ use one external toggle and no path component.  The constant
and empty classes carry the trivial action.
\end{theorem}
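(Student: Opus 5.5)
We will build the bijection for \(\Lambda=\{213,312\}\) out of two results already stated. Corollary~\ref{cor:four-catalan-count} says the nonempty components are indexed by \(\sigma\in\{123,132,231,321\}\), and each component is identified with \(\Dyck_k\) by deleting the forced block of Theorem~\ref{thm:block-specifications}. Lemma~\ref{lem:catalan-descent-shift} gives \(\des(\Phi_\sigma(w))=\val(P)+\des(\sigma)\).

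We define \(\varepsilon_1=\ind_{\sigma_1>\sigma_2}\) and \(\varepsilon_2=\ind_{\sigma_2>\sigma_3}\). Then \(123,132,231,321\) go to \((0,0),(0,1),(1,0),(1,1)\), and \(\des(\sigma)=\varepsilon_1+\varepsilon_2\). We then transpose the coordinates, or simply relabel them, so that they match the displayed table. Only the bit sum enters the descent formula, so the swap is harmless. Because the four \(\sigma\) realise all four bit pairs, this gives the bijection \(\Canon_3^k(\Lambda)\leftrightarrow\Dyck_k\times\{0,1\}^2\) with \(\des(\pi)=\val(P)+\varepsilon_1+\varepsilon_2\).

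Next we compose with the inverse of \(f_{k-1}\), so that \(\pi\) becomes a triple \((M,\varepsilon_1,\varepsilon_2)\) with \(M\in\mathcal M^{(2)}_{k-1}\). By \eqref{eq:motzkin-valley-weight}, \(\des(\pi)=u(M)+b(M)+\varepsilon_1+\varepsilon_2\).

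The group \((\Z/2\Z)^{k+1}\) acts as follows. Generator \(j\le k-1\) swaps \(H_1\leftrightarrow H_2\) at position \(j\) when that step is level and fixes the path otherwise. The last two generators flip \(\varepsilon_1\) and \(\varepsilon_2\). These involutions commute, and they preserve the uncoloured skeleton, so each orbit consists of one skeleton with \(i\) up-steps and \(k-1-2i\) level steps, together with all colourings and all bit pairs. The orbit therefore has \(2^{k+1-2i}\) elements.

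On an orbit the statistic \(u+b+\varepsilon_1+\varepsilon_2\) equals \(i\) plus the number of "on" choices among \(k+1-2i\) independent binary choices. The orbit enumerator is therefore \(x^i(1+x)^{k+1-2i}\). Summing over skeletons and comparing with Corollary~\ref{cor:four-catalan-gamma}, the number of Motzkin skeletons of length \(k-1\) with \(i\) up-steps is \(\binom{k-1}{2i}C_i\). One can also see this count directly by choosing the \(2i\) non-level positions and a Dyck word on them. By uniqueness of the \(\gamma\)-expansion, \(\gamma_{k,i}\) equals this count.

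The remaining classes are handled the same way. For \(\{132,231\}\), Corollary~\ref{cor:four-catalan-count} gives the underlying permutations \(123,213,312,321\), which again realise both bits \(\ind_{\sigma_1>\sigma_2}\) and \(\ind_{\sigma_2>\sigma_3}\) in all four combinations. Lemma~\ref{lem:catalan-descent-shift} applies verbatim, so the same action works.

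For the \((1+x)N_k(x)\) classes, the proof of Proposition~\ref{prop:explicit-positive-formulas} shows two Catalan components whose descent shifts are \(0\) and \(1\). We use one external bit recording which component, and obtain orbit enumerators \(x^i(1+x)^{k-2i}\), matching \((1+x)\) times \eqref{eq:narayana-gamma}. For the \(1+x\) classes there are two singleton components with shifts \(0\) and \(1\), so a single toggle gives one orbit with enumerator \(1+x\). The constant and empty classes carry the trivial group.

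The main obstacle is the validity of the Motzkin encoding together with \eqref{eq:motzkin-valley-weight}. That requires checking that \(f_n\) is a bijection, via its first-return inverse, and that each case adds exactly the claimed number of valleys. In particular, in the case \(UM_1DM_2\), concatenating \(Uf_i(M_1)D\) with the nonempty path \(f_{n-2-i}(M_2)\) must add exactly one valley at the junction, and the same must hold for \(UD\,f_{n-1}(M')\). I would prove this by induction on \(n\), noting that every \(f_m(\cdot)\) is nonempty and begins with \(U\). This guarantees exactly one new \(DU\) at each concatenation and none inside the wrapping \(U\cdots D\).
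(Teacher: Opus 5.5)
Your overall route is the paper's: the component bijections of Corollary~\ref{cor:four-catalan-count}, the shift formula of Lemma~\ref{lem:catalan-descent-shift}, the Motzkin encoding with \eqref{eq:motzkin-valley-weight}, and orbits indexed by uncoloured skeletons. However, the step that produces the external bits fails as written. The pair $(\ind_{\sigma_1>\sigma_2},\ind_{\sigma_2>\sigma_3})$ just records the descent set of $\sigma$. The permutations $132$ and $231$ have the same descent set $\{2\}$, so both map to $(0,1)$, and $(1,0)$ is never attained. Indeed, on $\{123,132,231,321\}$ one has $\sigma_2\neq 1$, so $\sigma_1>\sigma_2$ forces $\sigma=321$. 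Your map $\sigma\mapsto(\varepsilon_1,\varepsilon_2)$ is therefore not injective, and transposing the coordinates does not fix this. The claim that the four permutations "realise all four bit pairs" is false. The same collision occurs for $\{132,231\}$: there $213$ and $312$ both map to $(1,0)$, and $(0,1)$ is never attained.

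The repair is immediate, because the bits do not need to be descent indicators. Define the correspondence by the displayed table, or by any bijection between the four surviving underlying permutations and $\{0,1\}^2$ with $\varepsilon_1+\varepsilon_2=\des(\sigma)$. Such a bijection exists precisely because the descent numbers of those permutations form the multiset $\{0,1,1,2\}$; this is exactly how the paper's proof phrases it. With that change, the rest of your argument goes through and coincides with the paper's: commuting toggles, orbits equal to a fixed skeleton with all colourings and bit pairs, the enumerator $x^i(1+x)^{k+1-2i}$, and the one-bit and zero-bit cases. Your induction checking that $f_n$ is a bijection and creates exactly one valley at each concatenation, and your direct count $\binom{k-1}{2i}C_i$ of skeletons, supply details the paper only sketches before the theorem.
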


\begin{proof}
The four component bijections in Corollary~\ref{cor:four-catalan-count}
identify an object with a Dyck path and one of four underlying permutations.
The bit labeling keeps track of the descent shifts $0,1,1,2$, so
Lemma~\ref{lem:catalan-descent-shift} gives the displayed statistic formula.
Equation~\eqref{eq:motzkin-valley-weight} shows that an uncoloured skeleton with
$i$ up-steps contributes the mandatory factor $x^i$.  Each of its
$k-1-2i$ level steps can be toggled between weights $1$ and $x$, and the two
external bits can be toggled independently.  Hence the orbit enumerator is
\[
x^i(1+x)^{k-1-2i}(1+x)^2=x^i(1+x)^{k+1-2i}.
\]
The same argument with one or zero external bits gives the remaining formulas.
This action is analogous in spirit to the orbit decompositions in
Foata--Strehl theory; see, for example, \cite{Branden}.
\end{proof}

\begin{figure}[H]
\centering
\begin{subfigure}{0.43\textwidth}
\centering
\begin{tikzpicture}[node distance=1.6cm,
 b/.style={draw,rounded corners,minimum width=1.2cm,minimum height=7mm}]
\node[b] (a) {123};
\node[b,right=of a] (b) {132};
\node[b,below=of a] (c) {231};
\node[b,right=of c] (d) {321};
\draw[<->,thick] (a)--node[above]{\(\varepsilon_1\)}(b);
\draw[<->,thick] (c)--node[below]{\(\varepsilon_1\)}(d);
\draw[<->,thick] (a)--node[left]{\(\varepsilon_2\)}(c);
\draw[<->,thick] (b)--node[right]{\(\varepsilon_2\)}(d);
\end{tikzpicture}
\caption{The two external toggles.}
\end{subfigure}
\hfill
\begin{subfigure}{0.52\textwidth}
\centering
\begin{tikzpicture}[x=1.05cm,y=0.75cm]
\draw[->] (-0.2,0)--(5.0,0);
\draw[very thick] (0,0)--(1,1)--(2,1)--(3,1)--(4,0);
\fill (0,0) circle (1.5pt);\fill (1,1) circle (1.5pt);
\fill (2,1) circle (1.5pt);\fill (3,1) circle (1.5pt);
\fill (4,0) circle (1.5pt);
\node at (1.5,1.38) {\(H_1\)};
\node at (2.5,1.38) {\(H_2\)};
\node[align=center] at (2,-0.75)
 {one up-step, two level-step toggles\\and two external toggles};
\end{tikzpicture}
\caption{An orbit contributes \(x(1+x)^4\).}
\end{subfigure}
\caption{The Boolean-orbit model for a four-Catalan class at \(k=5\).}
\label{fig:motzkin-toggle}
\end{figure}

\section[The sequence B]{The sequence \(B_k\)}
\label{sec:Bstate}

By Theorem~\ref{thm:block-specifications}, \(B_k\) counts the three-row
lattice words in which every \(1\) precedes every \(3\).  Cutting immediately
after the final \(1\) turns the separation condition into two independent
ballot problems.

\begin{lemma}[A shifted Vandermonde identity]
\label{lem:shifted-vandermonde}
For nonnegative integers \(r,s,m\) with \(m\leq s\), and with the
convention that an out-of-range binomial coefficient is zero,
\begin{equation}\label{eq:shifted-vandermonde}
\sum_{b\in\Z}
 \binom{r+b}{b}\binom{s-b}{m-b}
 =\binom{r+s+1}{m}.
\end{equation}
\end{lemma}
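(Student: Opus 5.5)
We will prove \eqref{eq:shifted-vandermonde} by reducing it to the classical Vandermonde convolution through upper negation. The plan is to first pin down the range of summation, then negate the upper indices of both factors, and finally return to nonnegative indices.

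\textbf{Step 1: the range of $b$.} The first factor $\binom{r+b}{b}$ is nonzero only for $b\ge 0$. The second factor $\binom{s-b}{m-b}$ is nonzero only for $0\le m-b\le s-b$. Given $b\le m\le s$, this means it is nonzero exactly for $b\le m$. So the sum runs over $0\le b\le m$, and in this range every upper index $r+b$ and $s-b$ is nonnegative, so the out-of-range convention causes no ambiguity.

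\textbf{Step 2: upper negation.} For nonnegative $n$ and $j$ we have $\binom{n+j}{j}=(-1)^j\binom{-n-1}{j}$. Applying this to each factor gives
\[
\binom{r+b}{b}=(-1)^b\binom{-r-1}{b},
\qquad
\binom{s-b}{m-b}=\binom{(s-m)+(m-b)}{m-b}=(-1)^{m-b}\binom{-(s-m)-1}{m-b}.
\]
The product of the signs is $(-1)^m$, which does not depend on $b$.

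\textbf{Step 3: Vandermonde and back.} Vandermonde's convolution $\sum_b\binom{x}{b}\binom{y}{m-b}=\binom{x+y}{m}$ holds for arbitrary upper arguments, so we apply it with $x=-r-1$ and $y=-(s-m)-1$. The sum becomes $(-1)^m\binom{-r-s+m-2}{m}$. Upper negation once more gives $\binom{(r+s-m+1)+m}{m}=\binom{r+s+1}{m}$, which is the right-hand side.

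As a cross-check, or as an alternative proof, there is a lattice-path count. Take paths from $(0,0)$ to $(r+s+1-m,m)$, that is, with $r+s+1-m$ east steps and $m$ north steps. Classify each path by the number $b$ of north steps taken before its $(r+1)$-st east step. The part of the path before that east step contributes $\binom{r+b}{b}$, and the part after it contributes $\binom{s-b}{m-b}$.

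I do not expect a real obstacle. The only care needed is the boundary bookkeeping in Step 1, namely that $m\le s$ makes the upper index $s-b$ nonnegative. This is what licenses the negation formula in Step 2, which requires $s-m\ge 0$.
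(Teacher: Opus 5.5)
Your proof is correct, but it takes a different route from the paper's.

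\textbf{The paper's route.} It writes the second factor as $[x^{m-b}](1+x)^{s-b}$, a binomial whose upper index varies with $b$. It then sums the negative-binomial series $\sum_b\binom{r+b}{b}y^b=(1-y)^{-r-1}$ at $y=x/(1+x)$, so that $(1+x)^s\bigl(1-\tfrac{x}{1+x}\bigr)^{-r-1}=(1+x)^{r+s+1}$.

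\textbf{Your route.} You rewrite the second factor as $\binom{(s-m)+(m-b)}{m-b}$. This makes the fixed quantity $s-m$ play the role that $r$ plays in the first factor. After upper negation the sum is a plain Chu--Vandermonde convolution, equivalently $[x^m](1-x)^{-(r+1)}(1-x)^{-(s-m+1)}=\binom{r+s+1}{m}$.

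\textbf{What each buys.} Your version is shorter and reduces the lemma to a textbook identity, at the cost of invoking Vandermonde for negative upper arguments. The paper's version stays inside one formal power-series manipulation. Your lattice-path argument is a complete proof on its own, not only a cross-check. It gives a bijective explanation and makes the hypothesis $m\le s$ visible: that hypothesis guarantees that the $r+s+1-m$ east steps include an $(r+1)$-st one. The same inequality is what makes $s-m\ge 0$ in your negation step. The paper's computation needs $m\le s$ for an analogous reason, namely so that $\binom{s-b}{m-b}$ agrees with $[x^{m-b}](1+x)^{s-b}$ for every $b\le m$, but it leaves this use implicit.
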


\begin{proof}
Use \(\binom{r+b}{b}=[u^b](1-u)^{-r-1}\) and sum after writing
\[
\binom{s-b}{m-b}=[x^{m-b}](1+x)^{s-b}.
\]
Equivalently, coefficient extraction and the negative-binomial series give
\begin{align*}
\sum_b\binom{r+b}{b}\binom{s-b}{m-b}
&=[x^m](1+x)^s
  \sum_{b\geq0}\binom{r+b}{b}\left(\frac{x}{1+x}\right)^b\\
&=[x^m](1+x)^s
  \left(1-\frac{x}{1+x}\right)^{-r-1}\\
&=[x^m](1+x)^{r+s+1}
 =\binom{r+s+1}{m}.
\end{align*}
\end{proof}
\begin{theorem}
\label{thm:B-closed-form}
For every \(k\geq0\),
\begin{equation}\label{eq:B-closed-form}
B_k=\binom{3k}{k}-2\binom{3k}{k-1}+\binom{3k}{k-2}
 =\frac{k^2+k+2}{2(k+1)(2k+1)}\binom{3k}{k}.
\end{equation}
Equivalently,
\begin{equation}\label{eq:B-ratio}
\frac{B_{k+1}}{B_k}
 =\frac{3(3k+1)(3k+2)(k^2+3k+4)}
 {2(k+2)(2k+3)(k^2+k+2)}.
\end{equation}
\end{theorem}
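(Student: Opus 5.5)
The plan is to follow the hint stated at the start of this section: cut each word immediately after its final \(1\), count the two resulting ballot problems separately, and then collapse the resulting sum with Lemma~\ref{lem:shifted-vandermonde}.

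\textbf{Step 1: the cut.} Let \(w\in\Lat_3^k\) satisfy \(\lastpos_w(1)<\firstpos_w(3)\). Write \(w=xy\), where \(x\) ends with the final \(1\). Then \(x\) is a word on \(\{1,2\}\) with \(k\) ones and some number \(a\) of twos, with \(0\le a\le k\), and it is a ballot word ending in \(1\). The suffix \(y\) has \(k-a\) twos and \(k\) threes. The lattice condition on \(y\) relative to the prefix is that, at every point of \(y\), the count of threes is at most \(a\) plus the count of twos so far. The comparison between ones and twos is automatic inside \(y\), because all \(k\) ones have already appeared.

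\textbf{Step 2: counting each piece.} Both counts come from the reflection principle as ballot numbers. For \(x\): it has \(k\) ones and \(a\) twos and ends with a \(1\). Deleting that final \(1\) leaves a ballot word with \(k-1\) ones and \(a\) twos, counted by
\[
\binom{k-1+a}{a}-\binom{k-1+a}{a-1}.
\]
For \(y\): it is a path with \(k-a\) up-steps (twos) and \(k\) down-steps (threes), starting at height \(a\) and never going below \(0\). It ends at height \(0\). The reflection principle gives
\[
\binom{2k-a}{k}-\binom{2k-a}{k+1}.
\]
So \(B_k=\sum_a(\text{first})(\text{second})\).

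\textbf{Step 3: evaluating the sum.} Expanding the product gives four bilinear sums of the form \(\sum_a\binom{r+a}{a}\binom{s-a}{m-a}\) with shifted parameters. To put each into that form, rewrite \(\binom{k-1+a}{a-1}=\binom{k+(a-1)}{a-1}\) and reindex with \(b=a-1\). Similarly, use \(\binom{2k-a}{k}=\binom{2k-a}{k-a}\) and \(\binom{2k-a}{k+1}=\binom{2k-a}{k-a-1}\). Lemma~\ref{lem:shifted-vandermonde} then turns each of the four sums into a single binomial \(\binom{3k}{\cdot}\) (or \(\binom{3k-1}{\cdot}\) after shifts). The aim is to reach exactly
\[
\binom{3k}{k}-2\binom{3k}{k-1}+\binom{3k}{k-2}.
\]

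\textbf{Main obstacle.} I expect the bookkeeping in Step 3 to be the hardest part. The difficulty is matching the shifted indices and the ranges of summation, including the boundary term \(a=0\), which must be handled using the convention for out-of-range binomials, so that the hypothesis \(m\le s\) holds in each application. If the four terms produce \(\binom{3k-1}{\cdot}\) instead, I will use Pascal's rule to combine them. As a fallback, I will check the result against the table values \(1,1,4,21,121,728\).

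\textbf{Step 4: closed form and ratio.} The second expression in \eqref{eq:B-closed-form} follows by factoring out \(\binom{3k}{k}\). This uses \(\binom{3k}{k-1}=\frac{k}{2k+1}\binom{3k}{k}\) and \(\binom{3k}{k-2}=\frac{k(k-1)}{(2k+1)(2k+2)}\binom{3k}{k}\), and the resulting rational function simplifies to \(\frac{k^2+k+2}{2(k+1)(2k+1)}\). Finally, \eqref{eq:B-ratio} comes from dividing consecutive closed forms, using \(\binom{3k+3}{k+1}/\binom{3k}{k}=\frac{3(3k+1)(3k+2)}{(k+1)(2k+1)(2k+2)}\cdot\frac{3k+3}{3}\)-type simplifications. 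This last step is routine.
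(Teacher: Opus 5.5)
Your proposal is correct and is essentially the paper's proof: the same cut after the final \(1\), the same two reflection counts, and four applications of Lemma~\ref{lem:shifted-vandermonde} to the expanded product. Your reindexing works directly with parameters \((r,s,m)=(k-1,2k,k),\,(k-1,2k,k-1),\,(k,2k-1,k-1),\,(k,2k-1,k-2)\), giving \(\binom{3k}{k},\binom{3k}{k-1},\binom{3k}{k-1},\binom{3k}{k-2}\); so no \(\binom{3k-1}{\cdot}\) terms or Pascal step arise, the \(a=k\) term you include vanishes, and the cases \(k\le 1\) (where \(m=k-2<0\) falls outside the lemma's hypotheses) are checked by hand, as the paper does.
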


\begin{proof}
The cases $k=0$ and $k=1$ are immediate, so assume $k\geq2$.
For \(w\in\Lat_3^k(231)\), let \(b\) be the number of \(2\)'s before the
final \(1\).  We have \(0\leq b\leq k-1\), if all \(k\) copies of \(2\)
preceded the final \(1\), the prefix inequality would require all \(k\)
copies of \(1\) to have appeared earlier.

Delete the final \(1\) from the initial block.  A ballot word
with \(k-1\) copies of \(1\) and \(b\) copies of \(2\), counted by the
reflection principle as
\(
A_{k,b}=\binom{k+b-1}{b}-\binom{k+b-1}{b-1}
\) remains.
After the final \(1\), there are \(k-b\) copies of \(2\) and \(k\) copies of
\(3\).  At the cut the current excess of \(2\)'s over \(3\)'s is \(b\), and
the suffix must never make this excess negative. A second reflection gives
\[
S_{k,b}=\binom{2k-b}{k}-\binom{2k-b}{k+1}.
\]
The prefix and suffix choices are independent, so
\begin{equation}\label{eq:B-cut-sum}
B_k=\sum_{b=0}^{k-1}A_{k,b}S_{k,b}.
\end{equation}
The summand vanishes at \(b=k\), so the sum may be extended to all integers.
Expanding the product and applying
Lemma~\ref{lem:shifted-vandermonde} four times gives
\begin{align*}
\sum_b\binom{k+b-1}{b}\binom{2k-b}{k}
 &=\binom{3k}{k},\\
\sum_b\binom{k+b-1}{b}\binom{2k-b}{k+1}
 &=\binom{3k}{k-1},\\
\sum_b\binom{k+b-1}{b-1}\binom{2k-b}{k}
 &=\binom{3k}{k-1},\\
\sum_b\binom{k+b-1}{b-1}\binom{2k-b}{k+1}
 &=\binom{3k}{k-2}.
\end{align*}
This proves the first expression in \eqref{eq:B-closed-form}.  Dividing the
three binomial coefficients by \(\binom{3k}{k}\) and simplifying gives the
second.  Finally, taking the quotient of the closed forms at \(k+1\) and
\(k\) gives \eqref{eq:B-ratio}.
\end{proof}

\begin{figure}[H]
\centering
\begin{tikzpicture}[x=0.68cm,y=0.72cm]
\node at (-1.5,0) {\(w=\)};
\foreach \x/\a in {0/1,1/1,2/2,3/1,4/2,5/1,6/2,7/2,8/3,9/3,10/3,11/3}
  {\node[draw,minimum size=6mm] at (\x,0){\a};}
\draw[very thick,red!70!black] (5.5,-0.55)--(5.5,0.55);
\node[above] at (5.5,0.6) {final \(1\)};
\draw[decorate,decoration={brace,mirror,amplitude=5pt}]
 (-0.35,-0.6)--(5.35,-0.6)
 node[midway,below=7pt]{\(A_{k,b}\)};
\draw[decorate,decoration={brace,mirror,amplitude=5pt}]
 (5.65,-0.6)--(11.35,-0.6)
 node[midway,below=7pt]{\(S_{k,b}\)};
\end{tikzpicture}
\caption{The final-\(1\) cut used to enumerate the sequence.  The left
brace is the \(1/2\)-ballot prefix counted by \(A_{k,b}\), and the right brace
is the shifted \(2/3\)-ballot suffix counted by \(S_{k,b}\).  The pictured
word has \(k=4\) and \(b=2\).}
\label{fig:B-cut}
\end{figure}

\begin{theorem}
\label{thm:B-algebraic}
Let \(B(z)=\sum_{k\geq0}B_kz^k\).  It is the unique formal power series with
constant term \(1\) satisfying
\begin{align}\label{eq:B-cubic}
0={}&(1-16z+64z^2)
 +(-1+21z-96z^2)B(z)\notag\\
&+(-4z+27z^2)B(z)^2
 +(-4z^2+27z^3)B(z)^3.
\end{align}
In particular, \(B(z)\) is algebraic of degree at most three over \(\Q(z)\).
\end{theorem}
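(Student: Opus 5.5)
Our approach is to express \(B(z)\) through the ternary-tree series and then eliminate the parametrization. Let \(F(z)=\sum_{k\ge0}\frac{1}{2k+1}\binom{3k}{k}z^k\) be the ternary-tree series. It is characterized as the unique power series with \(F(0)=1\) satisfying \(F=1+zF^3\). By Theorem~\ref{thm:B-closed-form}, \(B_k\) is a fixed linear combination of \(\binom{3k}{k}\), \(\binom{3k}{k-1}\) and \(\binom{3k}{k-2}\). Lagrange inversion for the equation \(F=1+zF^3\) gives the standard formula
\[
[z^k]F(z)^m=\frac{m}{3k+m}\binom{3k+m}{k}.
\]
We also have \(\sum_k\binom{3k}{k}z^k=\frac{F}{3-2F}\). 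Each of the three binomial series is therefore a rational function of \(F\); alternatively, they are \(F^m\) times \(\frac{1}{3-2F}\) up to the usual shifts. Concretely, \(\sum_k\binom{3k}{k-j}z^k=\frac{F^{1+2j}\,z^{j}\cdots}{3-2F}\)-type expressions. The cleanest route is the general identity
\[
\sum_k\binom{3k+m}{k}z^k=\frac{F^{m+1}}{3-2F}.
\]
It is valid for the shifted coefficients after rewriting \(\binom{3k}{k-j}=\binom{3(k-j)+3j}{k-j}\), so that
\[
\sum_k\binom{3k}{k-j}z^k=\frac{z^jF^{3j+1}}{3-2F}=\frac{(F-1)^jF^{1}}{3-2F},
\]
where we used \(zF^3=F-1\).

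Summing these with coefficients \(1,-2,1\) gives
\[
B(z)=\frac{F\bigl(1-(F-1)\bigr)^2}{3-2F}=\frac{F(2-F)^2}{3-2F},
\]
a rational function \(R(F)\) of \(F\). We can check this at low order. With \(F=1+z+3z^2+\cdots\), we get \(F(2-F)^2=1-z+\cdots\) and \(1/(3-2F)=1+2z+\cdots\), so \(B=1+z+\cdots\), which matches \(B_1=1\).

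Next we eliminate \(F\). We have \(z=(F-1)/F^3\) and \(B=R(F)\). The polynomial \(\operatorname{Res}_F\bigl(zF^3-F+1,\;(3-2F)B-F(2-F)^2\bigr)\) is a polynomial in \(z\) and \(B\). Its degree in \(B\) is at most \(3\), the degree of the first polynomial in \(F\), so \(B\) is algebraic of degree at most three. The main computational obstacle is to check that this resultant, after removing any extraneous factor and normalizing, coincides with the displayed cubic~\eqref{eq:B-cubic}.

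A lighter alternative is to substitute \(z=(F-1)/F^3\) and \(B=R(F)\) directly into \eqref{eq:B-cubic}. After multiplying by \(F^9(3-2F)^3\), the verification becomes a polynomial identity in \(F\), which can be expanded and seen to vanish identically. I would do this substitution check, since it avoids computing the resultant.

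Uniqueness is the last step. Write \(P(z,B)\) for the right side of \eqref{eq:B-cubic}. At \(z=0\) it reduces to \(1-B\), so \(\partial_BP(0,1)=-1\neq0\). The implicit function theorem for formal power series, or equivalently comparing coefficients recursively, then shows that there is exactly one solution with constant term \(1\). Indeed, the coefficient of \(z^k\) in \(P(z,B)=0\) determines \(B_k\) linearly from \(B_0,\dots,B_{k-1}\) with leading coefficient \(-1\). Since our \(B(z)\) satisfies \(P=0\) and \(B(0)=1\), it is that unique series.
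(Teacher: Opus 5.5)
Your argument is correct. It lands on exactly the paper's parametrization $B=F(2-F)^2/(3-2F)$, $z=(F-1)/F^3$, but reaches it by a different route.

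\textbf{How the routes differ.} The paper starts from the second closed form $B_k=\bigl(\tfrac k2+\tfrac1{k+1}\bigr)t_k$ and writes $B(z)=\tfrac12 zT'(z)+\tfrac1z\int_0^z T(u)\,du$. It therefore has to compute $zT'=T(T-1)/(3-2T)$ and evaluate a formal integral by the substitution $u=(S-1)/S^3$. You instead apply the generalized binomial series $\sum_k\binom{3k+m}{k}z^k=F^{m+1}/(3-2F)$ to the three-binomial form $\binom{3k}{k}-2\binom{3k}{k-1}+\binom{3k}{k-2}$, and collapse the shifts using $zF^3=F-1$.

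\textbf{What your route buys.} It is purely algebraic, with no integration or change of variables inside formal power series. It extends verbatim to any linear combination of the shifted series $\binom{3k}{k-j}$. It also explains the factor $(2-F)^2$ as the image $(1-(F-1))^2$ of the coefficients $1,-2,1$.

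\textbf{Two small repairs.} First, the series identity you use is not what your displayed formula $[z^k]F^m=\frac{m}{3k+m}\binom{3k+m}{k}$ gives. It comes from the second form of Lagrange inversion, $[z^k]\,F^m/(1-3zF^2)=\binom{3k+m}{k}$, combined with $1-3zF^2=(3-2F)/F$; it is valid for all $m\geq 0$, including the case $m=0$ that you need. State or cite it that way. Second, delete the stray sentence containing $F^{1+2j}z^j\cdots$, whose exponent is wrong and which your next step supersedes.

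\textbf{Remaining steps.} Your elimination and uniqueness steps match the paper's. Multiplying by $F^9(3-2F)^3$ and checking a polynomial identity in $F$ of degree at most $14$ is a concrete, finite replacement for the paper's bare assertion that eliminating $T$ gives the cubic. Your observation that $P(0,B)=1-B$ makes the implicit-function uniqueness explicit.
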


\begin{proof}
Put
\[
t_k=\frac{1}{2k+1}\binom{3k}{k},
\qquad T(z)=\sum_{k\geq0}t_kz^k.
\]
The ternary-tree generating function satisfies
\begin{equation}\label{eq:ternary-tree}
T(z)=1+zT(z)^3.
\end{equation}
The closed formula in Theorem~\ref{thm:B-closed-form} can be rewritten as
\[
B_k=\left(\frac{k}{2}+\frac1{k+1}\right)t_k.
\]
Therefore
\begin{equation}\label{eq:B-in-T}
B(z)=\frac12zT'(z)+\frac1z\int_0^zT(u)\,du.
\end{equation}
Differentiating \eqref{eq:ternary-tree} and using
\(z=(T-1)/T^3\) gives
\[
zT'(z)=\frac{T(T-1)}{3-2T}.
\]
For the integral, change variables from \(u\) to the corresponding value
\(S=T(u)\).  Since
\(u=(S-1)/S^3\),
\[
\frac1z\int_0^zT(u)\,du
 =\frac1z\int_1^T\frac{3-2S}{S^3}\,dS
 =\frac{T(3-T)}{2}.
\]
Substitution into \eqref{eq:B-in-T} yields the rational parametrization
\begin{equation}\label{eq:B-parametrization}
B=\frac{T(T-2)^2}{3-2T},
\qquad z=\frac{T-1}{T^3}.
\end{equation}
Eliminating \(T\) from these two equations gives exactly
\eqref{eq:B-cubic}. At \(z=0\) the branch arising from
\(T(0)=1\) has \(B(0)=1\), and the formal implicit-function recursion
uniquely determines this branch coefficient by coefficient. This completes the proof.
\end{proof}

\begin{remark}
Borie and Falque proved that the sequence $B_k$ counts pairs \((T_1,T_2)\) of
plane binary trees of size $k$ for which the sum of the numbers of disabled
grafting sites is at most $k+1$; see \cite[Proposition~1]{BorieFalque}. 
\end{remark}

Define
\begin{equation}\label{eq:B-refinement}
\Bsf_k(x)=\sum_{w\in\Lat_3^k(231)}x^{\des(w)}.
\end{equation}
The first values are
\begin{align*}
\Bsf_1(x)&=1,\\
\Bsf_2(x)&=1+2x+x^2,\\
\Bsf_3(x)&=1+6x+10x^2+4x^3,\\
\Bsf_4(x)&=1+12x+43x^2+50x^3+15x^4.
\end{align*}

\begin{remark}[Real zeros in the tested range]
\label{obs:B-real-rooted}
We have verified that, for every
$1\leq k\leq25$, all zeros of $\Bsf_k(x)$ are real and negative.  Hence the
coefficient sequence is log-concave and unimodal throughout this tested range.
\end{remark}

\section[The sequence Q]{The sequence \(Q_k\)}
\label{sec:Qstate}

Under the standard labeling of the north-east rectangular poset, $Q_k$ is
the number of $123$-avoiding linear extensions of
$\operatorname{NE}_{3,k}$ studied by Anderson, Egge, Riehl, Ryan, Steinke, and Vaughan
\cite{AndersonEtAl}.  A cut at the final $1$ gives the following exact finite
multisum.

Use the convention \(\binom{m}{j}=0\) if \(j<0\) or \(j>m\).  For
\(b\geq1\) and \(r\geq b\), put
\begin{equation}\label{eq:E-rb}
E(r,b)=\binom{r+b-1}{b-1}-\binom{r+b-1}{b-2}.
\end{equation}
For \(0\leq c\leq b\leq k\), put
\begin{equation}\label{eq:H-kbc}
H_k(b,c)
 =\binom{2k-b-c}{k-b}-\binom{2k-b-c}{k-b-1}
 =\frac{b-c+1}{k-c+1}\binom{2k-b-c}{k-b}.
\end{equation}
\begin{theorem}
\label{thm:Q-multisum}
For every \(k\geq1\),
\begin{equation}\label{eq:Q-multisum}
Q_k=C_k+
\sum_{b=1}^{k-1}\sum_{c=0}^{b}
H_k(b,c)
\sum_{r=b}^{k-1}
E(r,b)\binom{k-1-r+c}{c}.
\end{equation}
Here \(b\) and \(c\) are the numbers of \(2\)'s and \(3\)'s before the final
\(1\), and \(r\) is the number of \(1\)'s before the last \(2\) in that
prefix.
\end{theorem}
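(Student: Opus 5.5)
We prove the formula by cutting each word at its final \(1\), in the same way as in the proof of Theorem~\ref{thm:B-closed-form}. Let \(w\in\Lat_3^k\), and let \(b\) and \(c\) be the numbers of \(2\)'s and \(3\)'s before the final \(1\).

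\emph{Reformulating \(321\)-avoidance.} After the final \(1\) only \(2\)'s and \(3\)'s occur. Hence any occurrence of \(321\) uses a \(1\) lying at or before the final \(1\), and its \(3\) and \(2\) both lie before that \(1\). So \(w\) contains \(321\) exactly when, in the prefix before the final \(1\), some \(3\) precedes some \(2\). Therefore \(w\) avoids \(321\) if and only if the restriction of that prefix to \(\{2,3\}\) is \(2^b3^c\). The prefix inequalities force \(c\le b\). We also have \(b\le k-1\): if all \(k\) copies of \(2\) preceded the final \(1\), all \(k\) copies of \(1\) would already have occurred, as in the proof of Theorem~\ref{thm:B-closed-form}.

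\emph{The suffix factor.} The suffix after the final \(1\) contains \(k-b\) copies of \(2\) and \(k-c\) copies of \(3\). The \(1\ge 2\) inequality is automatic there, since all \(1\)'s have already appeared. The suffix starts with a \(2\)-over-\(3\) excess of \(b-c\ge 0\) and must keep this excess nonnegative. The reflection principle counts such suffixes as \(\binom{2k-b-c}{k-b}-\binom{2k-b-c}{k-b-1}=H_k(b,c)\). Reflecting at height \(-1\) turns a bad path into one with \(k-b-1\) copies of \(2\), which gives the subtracted term. The suffix choice is independent of the prefix once \((b,c)\) is fixed.

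\emph{The case \(b=0\).} Then \(c=0\), the prefix is forced to be \(1^{k-1}\), and \(H_k(0,0)=C_k\). This is the isolated \(C_k\) term.

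\emph{The prefix factor for \(1\le b\le k-1\).} Let \(r\) be the number of \(1\)'s before the last \(2\) of the prefix. The segment ending at that last \(2\) is a word in \(1,2\) with \(r\) ones and \(b\) twos ending in \(2\). Deleting the terminal \(2\) leaves a ballot word with \(r\) ones and \(b-1\) twos, counted by \(E(r,b)\). The terminal \(2\) is admissible exactly when \(r\ge b\), and \(r\le k-1\) because the final \(1\) comes later.

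After this segment, the remaining \(k-1-r\) copies of \(1\) and the \(c\) copies of \(3\) interleave arbitrarily; this gives \(\binom{k-1-r+c}{c}\) choices. These interleavings are unconstrained because no further \(2\)'s appear: \(\#_1\ge b=\#_2\) already holds, and \(\#_3\le c\le b=\#_2\). Then comes the final \(1\).

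Multiplying the prefix and suffix factors and summing over \(b\), \(c\), \(r\) gives \eqref{eq:Q-multisum}.

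The step needing the most care is the prefix count. We must verify that the decomposition at the last \(2\) is bijective, and that the lattice conditions for \(3\) really are vacuous once \(c\le b\), so that the three factors are genuinely independent. The equivalent product form of \(H_k\) in \eqref{eq:H-kbc} is a routine binomial simplification.
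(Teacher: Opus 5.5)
Your proposal is correct and follows the paper's proof essentially step for step: the cut at the final $1$, the reduction of the prefix restricted to $\{2,3\}$ to $2^b3^c$, the factor $E(r,b)$ for the $\{1,2\}$-block ending at the last prefix $2$, the free shuffle $\binom{k-1-r+c}{c}$, and the shifted reflection count $H_k(b,c)$ for the suffix, with the $b=0$ case giving $C_k$. Your single ``if and only if'' reformulation of $321$-avoidance and your explicit check that the shuffle segment imposes no lattice constraints are slightly more careful than the paper's wording, but the argument is the same.
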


\begin{proof}
Let \(w\in\Lat_3^k(321)\), and cut immediately after its final \(1\).  Let
\(b\) and \(c\) be the numbers of \(2\)'s and \(3\)'s in the prefix before
that final \(1\).  The lattice inequalities imply
\(0\leq c\leq b\leq k-1\).

If \(b=0\), then also \(c=0\); the word starts with \(1^k\), and its remaining
\(2/3\)-word is an arbitrary binary lattice word.  This gives the initial
term \(C_k\).

Assume \(b\geq1\).  Within the prefix, every \(2\) must precede every \(3\).
Indeed, if a \(3\) preceded a later \(2\), the final \(1\) would complete a
\(3,2,1\) occurrence.  Let \(r\) be the number of \(1\)'s occurring no later
than the last of those \(b\) copies of \(2\).  The initial block on
\(\{1,2\}\) has \(r\) ones and \(b\) twos, ends in \(2\), and is ballot.
Deleting its final \(2\) and applying the reflection principle gives
\[
E(r,b)=\binom{r+b-1}{b-1}-\binom{r+b-1}{b-2}.
\]
The range is \(b\leq r\leq k-1\): ballot condition gives \(r\geq b\), and the final
\(1\) has not yet appeared.

Between this last prefix \(2\) and the final \(1\), the remaining
\(k-1-r\) prefix copies of \(1\) may be shuffled freely with the \(c\) copies
of \(3\).  Such a shuffle cannot create \(321\), since no prefix \(2\) follows
a prefix \(3\), and it is counted by
\[
\binom{k-1-r+c}{c}.
\]
After the final \(1\), the suffix consists of \(k-b\) copies of \(2\) and
\(k-c\) copies of \(3\).  At the cut, the excess of twos over threes is
\(b-c\), and the suffix must preserve the inequality \(\#_2\geq\#_3\).
The shifted reflection principle gives
\[
H_k(b,c)
 =\binom{2k-b-c}{k-b}-\binom{2k-b-c}{k-b-1}.
\]
Once the final \(1\) has passed, no new \(321\) occurrence can be completed,
so these choices are independent and exhaustive.  Multiplying the three
factors and summing over \(r,c,b\) proves \eqref{eq:Q-multisum}.
\end{proof}

\begin{figure}[H]
\centering
\begin{tikzpicture}[x=0.72cm,y=0.72cm]
\node at (-1,0) {\(w=\)};
\foreach \x/\a in {0/1,1/2,2/1,3/2,4/1,5/3,6/3,7/1,8/2,9/3,10/2,11/3}
 {\node[draw,minimum size=6mm] at (\x,0){\a};}
\draw[very thick,red!70!black] (7.5,-0.55)--(7.5,0.55);
\draw[dashed,blue!70!black] (3.5,-0.5)--(3.5,0.5);
\node[above] at (7.5,0.6) {final \(1\)};
\node[above] at (3.5,0.6) {last prefix \(2\)};
\draw[decorate,decoration={brace,mirror,amplitude=5pt}]
 (-0.35,-0.6)--(3.35,-0.6)
 node[midway,below=7pt]{\(E(r,b)\)};
\draw[decorate,decoration={brace,mirror,amplitude=5pt}]
 (3.65,-0.6)--(7.35,-0.6)
 node[midway,below=7pt]{shuffle};
\draw[decorate,decoration={brace,mirror,amplitude=5pt}]
 (7.65,-0.6)--(11.35,-0.6)
 node[midway,below=7pt]{\(H_k(b,c)\)};
\end{tikzpicture}
\caption{The three factors in the last-one block decomposition.}
\label{fig:Q-cut}
\end{figure}
\begin{corollary}
\label{cor:Q-holonomic}
The sequence \((Q_k)_{k\geq0}\) is P-recursive, and its ordinary generating
function
\[
Q(z)=\sum_{k\geq0}Q_kz^k
\]
is D-finite.
\end{corollary}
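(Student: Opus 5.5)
The plan is to read P-recursiveness off the finite multisum of Theorem~\ref{thm:Q-multisum}, using the closure properties of holonomic sequences and definite sums \cite{StanleyEC2,PetkovsekWilfZeilberger}. Then I transfer to the generating function through the equivalence between P-recursive sequences and D-finite series recalled in the preliminaries.

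\emph{Step 1: the summand is proper hypergeometric.} Write
\[
F(k,b,c,r)=H_k(b,c)\,E(r,b)\binom{k-1-r+c}{c}.
\]
By \eqref{eq:H-kbc}, $H_k(b,c)=\frac{b-c+1}{k-c+1}\binom{2k-b-c}{k-b}$. Likewise
\[
E(r,b)=\binom{r+b-1}{b-1}-\binom{r+b-1}{b-2}=\frac{r-b+2}{r+1}\binom{r+b-1}{b-1}
\]
is a single hypergeometric term times a rational function. Thus $F$ is a product of factorials of integer-linear forms in $(k,b,c,r)$ times a rational function. That makes it a proper hypergeometric term, hence holonomic in all four variables.

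\emph{Step 2: the sums have natural boundaries.} The summation ranges $1\le b\le k-1$, $0\le c\le b$, $b\le r\le k-1$ are cut off by the vanishing of factorial terms, once we use the convention $\binom{m}{j}=0$ outside $0\le j\le m$. Concretely, $\binom{k-1-r+c}{c}$ combined with $r\le k-1$, and $\binom{2k-b-c}{k-b}$, $\binom{r+b-1}{b-1}$, enforce these bounds. Care is needed where the vanishing fails, for instance at $b=k$ or with negative upper entries, and at these points I would insert explicit indicator corrections. The safest route is to note that indicator functions of polyhedral regions such as $[0\le c\le b]$ and $[b\le r\le k-1]$ are holonomic. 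Holonomic functions are closed under products, so $F$ times these indicators is holonomic, and summing over $r$, then $c$, then $b$ preserves holonomicity (Zeilberger's theory, or Lipshitz's theorem for the multivariate case). This gives a linear recurrence in $k$ with polynomial coefficients for the triple sum.

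\emph{Step 3: assemble.} $C_k$ is hypergeometric, hence P-recursive, and P-recursive sequences are closed under addition. So $Q_k$ is P-recursive for $k\ge1$. Adjoining $Q_0=1$ only modifies finitely many terms, which preserves P-recursiveness (multiply the recurrence by a suitable polynomial vanishing at the small indices). The D-finiteness of $Q(z)$ then follows.

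The main obstacle is Step 2: making sure the finite, variable-dependent summation bounds genuinely fit the holonomic framework, for example when $b-2<0$ in $E$ or $k-b-1<0$ in $H_k$. I would handle this with the indicator-function argument rather than relying on natural boundaries. Optionally, I would check the result by running creative telescoping to obtain an explicit recurrence and confirming it against $1,5,33,234,1706,12618$.
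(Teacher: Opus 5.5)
Your proposal is correct and follows the paper's proof. You rewrite the binomial differences as rational-times-binomial terms (your forms $E(r,b)=\frac{r-b+2}{r+1}\binom{r+b-1}{b-1}$ and $H_k(b,c)=\frac{b-c+1}{k-c+1}\binom{2k-b-c}{k-b}$ are right, and the linear denominators absorb into factorials). You then observe that the summand is proper hypergeometric, apply closure of finite proper hypergeometric multisums, add the Catalan term, and pass to D-finiteness. Your indicator-function/Lipshitz handling of the affine summation bounds and your remark about $Q_0$ simply make explicit details that the paper's one-line appeal to the Wilf--Zeilberger closure theorem leaves implicit.
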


\begin{proof}
After replacing the differences of binomial coefficients in
\eqref{eq:Q-multisum} by their rational-times-binomial forms, every summand is
a proper hypergeometric term in \(k,b,c,r\).  The summation ranges are finite
and bounded by affine functions of \(k\).  The closure theorem for finite
proper hypergeometric multisums \cite{PetkovsekWilfZeilberger}, or, equivalently, creative telescoping, therefore
implies that the triple sum is P-recursive in \(k\).  The Catalan boundary term is P-recursive as
well.  The equivalence between P-recursive sequences and D-finite ordinary
generating functions gives the second assertion.
\end{proof}

Define the descent refinement
\begin{equation}\label{eq:Q-refinement}
\Qsf_k(x)=\sum_{w\in\Lat_3^k(321)}x^{\des(w)}.
\end{equation}
The first values are
\begin{align*}
\Qsf_1(x)&=1,\\
\Qsf_2(x)&=1+3x+x^2,\\
\Qsf_3(x)&=1+10x+16x^2+6x^3,\\
\Qsf_4(x)&=1+22x+84x^2+96x^3+30x^4+x^5.
\end{align*}

\begin{remark}
\label{obs:Q-nonreal}
The polynomial $\Qsf_k(x)$ is real-rooted for $1\leq k\leq10$, but not for
$k=11$.  At $k=11$ it has one nonreal conjugate pair
\[
-7.09084909284\pm1.18209112637\,i.
\]
Its coefficient sequence is log-concave and unimodal for every $k\leq25$
in the computations.
\end{remark}

\section{Three symmetric avoidance families}\label{sec:symmetric-families}

The results in this section resolve the three conjectures
in Laudone's final section.  By Theorem~\ref{thm:symmetric-reduction}, it is
enough to describe the corresponding lattice words.

\subsection{Avoidance of \texorpdfstring{$\Orb(1213)$}{Orb(1213)}}
\begin{theorem}\label{thm:1213-terminal}
For $n\geq2$ and $k\geq1$,
\begin{equation}\label{eq:1213-structure}
\Lat_n^k(\Orb(1213))
 =\left\{
 1^k2^k\cdots(n-2)^k v:
 v\in\Lat_2^k\text{ on }\{n-1,n\}
 \right\}.
\end{equation}
Consequently,
\[
L_{n,k}(\Orb(1213))=C_k
\qquad\text{and}\qquad
c_n^k(\Orb(1213))=n!C_k.
\]
\end{theorem}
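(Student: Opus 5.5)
We prove the two inclusions in \eqref{eq:1213-structure} separately and then count.

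\emph{The displayed words avoid $\Orb(1213)$.} An occurrence of a pattern $abac$ uses three distinct letters, and the letter $a$ occurs twice with a different letter $b$ strictly between its two copies. In $w=1^k2^k\cdots(n-2)^kv$, each letter $i\leq n-2$ occupies a contiguous block, so no other letter lies between two of its copies. Hence $a\in\{n-1,n\}$. The two copies of $a$ and the letter $b$ between them then lie inside $v$, so $b$ is the other letter of $\{n-1,n\}$. The letter $c$ comes after the second copy of $a$, so it also lies in $v$. Thus $c\in\{n-1,n\}$, contradicting that $a,b,c$ are distinct.

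\emph{Every avoiding lattice word has this form.} Let $w\in\Lat_n^k$ avoid $\Orb(1213)$. We show by induction on $i=1,\dots,n-2$ that, after the blocks $1^k\cdots(i-1)^k$, the word continues with $i^k$. Remove the first $(i-1)k$ letters. The remaining word is a lattice word on $\{i,\dots,n\}$ and, as a factor of $w$, still avoids the orbit, so it suffices to handle $i=1$.

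Suppose the copies of $1$ are not consecutive. Then some letter $b\neq1$ lies between two copies of $1$. Since the word is lattice, $b=2$ may be assumed: the first letter after the first $1$ that is not $1$ must be $2$, because a letter $j\geq3$ cannot precede the first $2$ (Lemma~\ref{lem:ordered-occurrences}). This gives $1\,2\,1$ with positions $p_{1,r}<q<p_{1,r+1}$. We now need a letter $c\notin\{1,2\}$ after the later $1$, and $c=3$ works: $p_{3,k}>p_{1,k}\geq p_{1,r+1}$ by Lemma~\ref{lem:ordered-occurrences}, and $3$ exists because $n\geq3$ whenever $i=1\leq n-2$. The resulting subsequence $1\,2\,1\,3$ is a copy of $1213$, a contradiction.

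Hence $w$ begins with $1^k$, since the copies of $1$ are consecutive and the first letter is $1$. After $n-2$ steps, the remainder is a lattice word on $\{n-1,n\}$. This is exactly where the argument stops, because no third letter remains to play the role of $c$. The main care lies in this step, namely in choosing $c=3$ via Lemma~\ref{lem:ordered-occurrences} and in checking that passing to the suffix preserves the lattice property. The latter holds because each removed block contains $k$ copies of each of its letters, and none of the remaining letters.

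\emph{Counting.} The displayed set is in bijection with $\Lat_2^k\simeq\Dyck_k$, so $L_{n,k}(\Orb(1213))=C_k$. Since $\Orb(1213)$ is symmetric, Theorem~\ref{thm:symmetric-reduction} gives $c_n^k(\Orb(1213))=n!\,C_k$. In the edge case $n=2$, every pattern in the orbit uses three distinct letters, so avoidance is automatic, and the formula reads $L_{2,k}=C_k$ with an empty forced prefix, consistent with the statement.
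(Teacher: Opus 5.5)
Your proof is correct and follows essentially the same route as the paper: you force the initial block $1^k$ by exhibiting a $1\,b\,1\,c$ occurrence via Lemma~\ref{lem:ordered-occurrences}, peel that block off and iterate, check the converse using the contiguity of the forced blocks, and count via Theorem~\ref{thm:symmetric-reduction}. The only cosmetic difference is that you normalize to $b=2$ and $c=3$, whereas the paper takes any $b>1$ occurring before the final $1$ and any $c\notin\{1,b\}$.
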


\begin{proof}
For $n=2$ the assertion is immediate: no pattern in $\Orb(1213)$ can occur,
and the entire word is the terminal binary block in
\eqref{eq:1213-structure}.  Assume $n\geq3$.  Let
$w\in\Lat_n^k$ avoid $\Orb(1213)$.  Suppose that a letter $b>1$ occurs
before the final copy of $1$.  By Lemma~\ref{lem:ordered-occurrences}, the first
copy of $1$ occurs before the selected $b$.  Choose the final $1$ after that
occurrence, and choose $c\notin\{1,b\}$.  By
Lemma~\ref{lem:ordered-occurrences}, the final copy of $c$ occurs after the
final copy of $1$.  Hence $w$ contains a subsequence
\[
1\,b\,1\,c
\]
whose equality pattern is $1213$, a contradiction.  Thus every copy of $1$
precedes every other letter, so $w=1^ku$.

Deleting the initial block and subtracting $1$ from the remaining letters
produces a member of $\Lat_{n-1}^k$ that still avoids $\Orb(1213)$.  Iterating
the argument forces the blocks $1^k,2^k,\ldots,(n-2)^k$, leaving an arbitrary
binary lattice word on $\{n-1,n\}$.

Conversely, consider a word of the form in \eqref{eq:1213-structure}.  If the
repeated letter in a $abac$ occurrence is at most $n-2$, then all
its copies are consecutive, so no different letter can lie between two of
them.  If the repeated letter is $n-1$ or $n$, then the terminal block contains
only two distinct letters and cannot supply the third letter required by
$1213$. Thus the word avoids the orbit.  The binary terminal block is counted
by $C_k$, and the rest follows from
Theorem~\ref{thm:symmetric-reduction}.
\end{proof}

\begin{example}\label{ex:1213-terminal}
For $n=5$ and $k=3$, the word
\[
111\,222\,333\,445455
\]
avoids $\Orb(1213)$. The terminal binary word
$445455$ here corresponds to the Dyck path $UUDUDD$.
\end{example}

\begin{figure}[H]
\centering
\begin{tikzpicture}[x=0.47cm,y=0.52cm,
  box/.style={draw,minimum width=0.42cm,minimum height=0.42cm,inner sep=0pt,font=\small}]
\foreach \x/\a in {0/1,1/1,2/1,3/2,4/2,5/2,6/3,7/3,8/3,9/4,10/4,11/5,12/4,13/5,14/5}
  \node[box] at (\x,4.0) {\a};
\draw[rounded corners] (-0.45,3.55) rectangle (8.45,4.45);
\draw[rounded corners] (8.55,3.55) rectangle (14.45,4.45);
\node at (4,3.05) {forced blocks};
\node at (11.5,3.05) {terminal Dyck block};
\draw[thick,-{Stealth[length=2mm]}] (11.5,2.65)--(11.5,1.8);
\draw[thick] (9,0)--(9.8,0.8)--(10.6,1.6)--(11.4,0.8)--(12.2,1.6)--(13.0,0.8)--(13.8,0);
\draw[dashed] (8.9,0)--(13.9,0);
\node[below] at (11.4,-0.15) {$UUDUDD$};
\end{tikzpicture}
\caption{The terminal-block bijection in Theorem~\ref{thm:1213-terminal}.}
\label{fig:1213-terminal}
\end{figure}

\subsection{Avoidance of \texorpdfstring{$\Orb(12112)$}{Orb(12112)}}
\begin{lemma}\label{lem:binary-12112}
Let $k\geq2$, let $a<b$, and let $u$ be a binary lattice word with $k$ copies of each of
$a,b$.  Then $u$ avoids $\Orb(12112)=\{12112,21221\}$ if and only if
\begin{equation}\label{eq:binary-12112-form}
u=a^{k-1}b^r a b^{k-r}
\qquad\text{for a unique }0\leq r\leq k-1.
\end{equation}
\end{lemma}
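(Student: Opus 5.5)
The plan is a direct structural argument on the binary word, using Lemma~\ref{lem:ordered-occurrences}. First translate the two forbidden patterns into the alphabet $\{a,b\}$ with $a<b$. An occurrence of $12112$ is a subsequence $a\,b\,a\,a\,b$. An occurrence of $21221$ is a subsequence $b\,a\,b\,b\,a$, since its larger letter is the repeated-first one. Both patterns share one feature: after some $b$ there must be \emph{two} later copies of $a$. This single observation drives both directions.

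For the forward direction, let $u$ avoid both patterns. The key claim is that no $b$ occurs before the $(k-1)$-st copy of $a$. Suppose a $b$ did occur before it.
\begin{enumerate}
\item The first letter of $u$ is $a$, by Lemma~\ref{lem:ordered-occurrences} with $r=1$, so this $a$ precedes the selected $b$.
\item The $(k-1)$-st and $k$-th copies of $a$ both lie after the selected $b$.
\item The $k$-th copy of $b$ lies after the $k$-th copy of $a$, again by Lemma~\ref{lem:ordered-occurrences}.
\end{enumerate}
Together these give a subsequence $a\,b\,a\,a\,b$, a contradiction. (For $k=2$ the claim just says no $b$ precedes the first $a$, which holds automatically.)

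Hence $u$ begins with $a^{k-1}$, and the rest consists of one $a$ and $k$ copies of $b$. So $u=a^{k-1}b^ra\,b^{k-r}$ for some $r\ge 0$. Lemma~\ref{lem:ordered-occurrences} forces the last $a$ to precede the last $b$, which gives $r\le k-1$. The value of $r$ is unique because it is the number of $b$'s before the final $a$.

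For the converse, take $u=a^{k-1}b^ra\,b^{k-r}$ with $0\le r\le k-1$. It is a lattice word: the prefix counts satisfy $\#_a\ge\#_b$ because at most $k-1\ge r$ copies of $b$ precede the final $a$. Every $b$ in $u$ is followed by at most one $a$. Both $a\,b\,a\,a\,b$ and $b\,a\,b\,b\,a$ require some $b$ followed by two later $a$'s, so neither pattern can occur.

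No step is a real obstacle. The only point needing care is the forward direction's use of the lattice condition to guarantee the terminal $b$ that completes $a\,b\,a\,a\,b$, which Lemma~\ref{lem:ordered-occurrences} supplies.
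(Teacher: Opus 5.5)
Your proof is correct and follows the paper's argument essentially step for step. In the forward direction you assume a $b$ precedes the $(k-1)$st $a$ and build $a\,b\,a\,a\,b$ from the initial $a$, the last two copies of $a$, and the final $b$ (via Lemma~\ref{lem:ordered-occurrences}); for the converse you observe that every $b$ is followed by at most one $a$, and your remarks on the case $k=2$ and on the lattice property are harmless extras.
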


\begin{proof}
Suppose that a copy of $b$ occurs before the $(k-1)$st copy of $a$.  A copy of
$a$ occurs before that $b$ because $u$ is ballot, two copies of $a$ occur after
it, and the final $b$ occurs after the final $a$ by
Lemma~\ref{lem:ordered-occurrences}.  These five letters form
$a\,b\,a\,a\,b$, an occurrence of $12112$. Therefore the first $k-1$
letters are $a$'s.  Only one $a$ remains, and the ballot condition allows
$r$ copies of $b$ before it, where $0\leq r\leq k-1$, giving
\eqref{eq:binary-12112-form}.

Conversely, after the first $b$ in a word of the displayed form there is at
most one remaining $a$. Hence neither $a\,b\,a\,a\,b$ nor
$b\,a\,b\,b\,a$ can occur. Uniqueness of $r$ is immediate.
\end{proof}

For $w\in\Lat_n^k$, define $\skel(w)$ by deleting the final occurrence of
each letter. 
\begin{theorem}\label{thm:12112-skeleton}
For $n\geq1$ and $k\geq2$,
\begin{equation}\label{eq:12112-skeleton}
w\in\Lat_n^k(\Orb(12112))
\quad\Longleftrightarrow\quad
\skel(w)=1^{k-1}2^{k-1}\cdots n^{k-1}.
\end{equation}
Equivalently,
\[
p_{i,k-1}(w)<p_{j,1}(w)
\qquad(1\leq i<j\leq n).
\]
\end{theorem}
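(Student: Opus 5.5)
The plan is to first show that the two descriptions on the right are equivalent, and then prove the equivalence with avoidance in each direction, using Lemma~\ref{lem:binary-12112} on two-letter restrictions.

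\emph{The two descriptions agree.} Suppose $\skel(w)=1^{k-1}\cdots n^{k-1}$. The $(k-1)$st copy of $i$ is non-final, so it lies in the $i$-block of the skeleton. The first copy of $j>i$ is also non-final because $k\ge2$, so it lies in the later $j$-block. Hence $p_{i,k-1}<p_{j,1}$.

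Conversely, suppose these inequalities hold. The non-final copies of $i$ (the first $k-1$ copies) all precede every non-final copy of any $j>i$. So the skeleton is the concatenation of the blocks $i^{k-1}$ in increasing order of $i$.

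\emph{Avoidance forces the skeleton.} Let $w$ avoid $\Orb(12112)$ and fix $i<j$. Restrict $w$ to the letters $\{i,j\}$. By Lemma~\ref{lem:ordered-occurrences}, $p_{i,r}<p_{j,r}$ for all $r$, so this restriction is a binary lattice word with $k$ copies of each letter. A subsequence of the restriction is a subsequence of $w$, so the restriction also avoids the orbit. Lemma~\ref{lem:binary-12112} then puts it in the form $i^{k-1}j^r i j^{k-r}$. In particular, the first $k-1$ copies of $i$ precede the first $j$, which is $p_{i,k-1}<p_{j,1}$.

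\emph{The skeleton forces avoidance.} This is the step needing care. An occurrence of $12112$ or $21221$ uses only two distinct letters, say $x\ne y$, with pattern $x\,y\,x\,x\,y$. Such an occurrence lives inside the restriction of $w$ to $\{x,y\}$. So it suffices to show that every two-letter restriction avoids both patterns.

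Fix $i<j$. Under the skeleton hypothesis, every copy of $j$ lies after $p_{i,k-1}$, so after the first $j$ at most one $i$ remains. In the pattern $x\,y\,x\,x\,y$, two copies of $x$ follow a copy of $y$.
\begin{itemize}
\item If $x=i$ and $y=j$, then two copies of $i$ must follow a $j$, which is impossible since only one $i$ can remain after the first $j$.
\item If $x=j$ and $y=i$, the occurrence reads $j\,i\,j\,j\,i$. Its first $i$ comes after a $j$, so it is the final copy of $i$. But then the last $i$ of the pattern would be a later copy of $i$, which cannot exist.
\end{itemize}
This is the argument of the converse half of Lemma~\ref{lem:binary-12112}, applied to each pair $i<j$. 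Hence $w$ avoids $\Orb(12112)$.

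I expect the only real obstacle to be justifying that the restriction to $\{i,j\}$ is again a binary lattice word, so that Lemma~\ref{lem:binary-12112} applies; Lemma~\ref{lem:ordered-occurrences} gives this directly. Everything else is bookkeeping.
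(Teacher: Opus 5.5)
Your proposal is correct and follows essentially the same route as the paper: restrict to each pair $i<j$, use Lemma~\ref{lem:ordered-occurrences} to see that the restriction is a binary lattice word, apply Lemma~\ref{lem:binary-12112} for the forward direction, and for the converse use that every occurrence of $\Orb(12112)$ involves only two letters. The differences are cosmetic. You prove explicitly that the skeleton condition is equivalent to the inequalities $p_{i,k-1}<p_{j,1}$, and you re-derive the converse half of Lemma~\ref{lem:binary-12112} inline rather than citing the normal form $i^{k-1}j^r i j^{k-r}$.
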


\begin{proof}
Restrict $w$ to any two letters $i<j$. By
Lemma~\ref{lem:ordered-occurrences}, the restriction is a binary lattice word.
If $w$ avoids $\Orb(12112)$, then so does every restriction, and
Lemma~\ref{lem:binary-12112} gives
$p_{i,k-1}<p_{j,1}$.  In particular, all non-final copies of $1$ precede every
copy of $2$, all non-final copies of $2$ precede every copy of $3$, and so on.
After the final copy of each letter is deleted, the remaining word is therefore
$1^{k-1}2^{k-1}\cdots n^{k-1}$.

Conversely, suppose the displayed skeleton condition holds.  For $i<j$, the
restriction to $\{i,j\}$ begins with $i^{k-1}$.  The final $i$ occurs before
the final $j$ by Lemma~\ref{lem:ordered-occurrences}, so the restriction has the
form $i^{k-1}j^r i j^{k-r}$ with $0\leq r\leq k-1$ and it
avoids both relabelings of $12112$.  Since every occurrence of
$\Orb(12112)$ uses exactly two distinct letters, $w$ avoids the full orbit.
\end{proof}
\begin{theorem}\label{thm:12112-fuss}
For $n\geq1$ and $k\geq2$, there is a bijection
\[
\Lat_n^k(\Orb(12112))
\longleftrightarrow
\FussDyck_{k,n}
\longleftrightarrow
\Tree_n^{(k)},
\]
where $\Tree_n^{(k)}$ denotes full ordered $k$-ary trees with $n$ internal
vertices.  Consequently,
\[
L_{n,k}(\Orb(12112))=\FC_{k,n}
\qquad\text{and}\qquad
c_n^k(\Orb(12112))=n!\FC_{k,n}.
\]
\end{theorem}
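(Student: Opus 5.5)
By Theorem~\ref{thm:12112-skeleton}, a word $w\in\Lat_n^k(\Orb(12112))$ is determined by where the $n$ final occurrences are inserted into the fixed skeleton $1^{k-1}2^{k-1}\cdots n^{k-1}$. I would therefore define the map
\[
\Psi(w)=s_1s_2\cdots s_{kn},
\qquad s_t=\begin{cases}U & \text{if } w_t \text{ is a non-final occurrence},\\ D&\text{if } w_t\text{ is a final occurrence.}\end{cases}
\]
This is the map already described in Theorem~\ref{thm:intro-three-decompositions}(ii). The plan has three parts: show that $\Psi$ lands in $\FussDyck_{k,n}$, show that it is bijective, and then quote the standard correspondence with $\Tree_n^{(k)}$ and the count \eqref{eq:fuss-catalan}.

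\textbf{$\Psi$ lands in $\FussDyck_{k,n}$.} The path $\Psi(w)$ has $(k-1)n$ up-steps and $n$ down-steps, so it ends at height $0$. For nonnegativity I would use the following key observation. Suppose the $m$th final occurrence, which is the final copy of some letter $j$, sits at position $t$. All $k-1$ non-final copies of $j$ precede it. By the skeleton condition, all non-final copies of $1,\dots,j$ then precede position $t$. Next, the final copies preceding $t$ belong to letters $i$ whose final copy comes before the final copy of $j$. By Lemma~\ref{lem:ordered-occurrences} applied to $r=k$, these letters are exactly $1,\dots,j-1$, so $m=j$. Hence just after step $t$ the height is at least $(k-1)j-(k-1)\cdot 0\cdot$ wait—more precisely at least $(k-1)j-(k-1)j=0$, since there are at least $(k-1)j$ up-steps and exactly $j$ down-steps so far. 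Between down-steps the height only increases, so the path never goes below zero.

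\textbf{Bijectivity.} Injectivity is immediate. The skeleton is fixed and the final copies appear in the order $1,2,\dots,n$ by Lemma~\ref{lem:ordered-occurrences}, so $w$ is recovered from $\Psi(w)$ by reading the $U$'s as $1^{k-1}2^{k-1}\cdots n^{k-1}$ and the $m$th $D$ as the letter $m$.

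For surjectivity, start from a path in $\FussDyck_{k,n}$ and build the word $w$ by this same rule. I must check two things: that the $m$th $D$ follows the $(k-1)$st copy of $m$, and that $w\in\Lat_n^k$.
\begin{enumerate}
\item Nonnegativity just after the $m$th $D$ forces at least $(k-1)m$ up-steps before it. These up-steps carry all non-final copies of $1,\dots,m$, so each letter $m$ has its $k$ copies with the final one last. The skeleton of $w$ is then as required.
\item For the lattice condition, I would verify $p_{i,r}<p_{i+1,r}$ as in Lemma~\ref{lem:ordered-occurrences}. For $r\le k-1$ this holds because the skeleton is ordered. For $r=k$ it holds because the $D$'s are labelled increasingly.
\end{enumerate}
Theorem~\ref{thm:12112-skeleton} then gives that $w$ avoids $\Orb(12112)$.

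\textbf{Trees and counting.} The bijection with $\Tree_n^{(k)}$ is the standard preorder encoding. Each internal vertex becomes a $U^{k-1}$ block and each leaf becomes a $D$ after adjusting, or one uses the usual \L ukasiewicz-path correspondence. I would cite it as recorded after \eqref{eq:fuss-catalan} rather than reprove it. The counts $L_{n,k}(\Orb(12112))=\FC_{k,n}$ and $c_n^k(\Orb(12112))=n!\,\FC_{k,n}$ then follow from \eqref{eq:fuss-catalan} and Theorem~\ref{thm:symmetric-reduction}.

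The main obstacle is the careful identification of the $m$th down-step with the final copy of $m$, since both directions of the bijection rest on it. Everything else is bookkeeping.
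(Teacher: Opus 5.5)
Your proposal is correct and follows essentially the same route as the paper: the same non-final$\mapsto U$, final$\mapsto D$ map, the same identification of the $m$th down-step with the final copy of $m$ via Lemma~\ref{lem:ordered-occurrences} to get nonnegativity, and the same inverse obtained by labelling the up-steps with the sorted skeleton and the down-steps with $1,\dots,n$. The differences are minor. The paper re-derives $\card{\FussDyck_{k,n}}=\FC_{k,n}$ from the first-return decomposition $UP_1\cdots UP_{k-1}DP_k$ and Lagrange inversion, whereas you simply cite \eqref{eq:fuss-catalan}. Your offhand tree encoding (each leaf $\mapsto D$) is also wrong as stated: it would produce $(k-1)n+1$ down-steps rather than $n$. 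This is harmless because you fall back on the standard correspondence.
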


\begin{proof}
Given $w$ satisfying Theorem~\ref{thm:12112-skeleton}, read it from left to
right.  Replace every non-final occurrence of a letter by
$U=(1,1)$ and every final occurrence by $D=(1,-(k-1))$.  There are
$(k-1)n$ up-steps and $n$ down-steps. After $d$ final occurrences have
appeared, these are necessarily the final copies of $1,2,\ldots,d$, because
final occurrences are ordered by Lemma~\ref{lem:ordered-occurrences}.  Each of
those letters has already contributed its $k-1$ non-final occurrences.  Thus at
least $(k-1)d$ up-steps have appeared, so the height never becomes negative.
The image is a proper Fuss--Dyck path.

For the converse, take $P\in\FussDyck_{k,n}$.  Label its up-steps, from left to right, by
\[
1^{k-1}2^{k-1}\cdots n^{k-1},
\]
and label its down-steps, from left to right, by $1,2,\ldots,n$.  When the
$i$th down-step occurs, non-negativity after that step implies that at least
$i(k-1)$ up-steps have already occurred. Hence all $k-1$ non-final copies of
$i$ precede its final copy.  The non-final copies are ordered by their labels
and the final copies are likewise ordered, so
Lemma~\ref{lem:ordered-occurrences} shows that the resulting word is a lattice
word.  Its non-final skeleton is the required sorted word and it
avoids $\Orb(12112)$.  The two constructions are inverses of each other.

The standard first-return decomposition of a nonempty Fuss--Dyck path is
\[
U P_1\,U P_2\cdots U P_{k-1}\,D P_k,
\]
where each $P_i$ is again a Fuss--Dyck path.  If
$F(z)=\sum_{n\geq0}\card{\FussDyck_{k,n}}z^n$, then
$F(z)=1+zF(z)^k$. Lagrange inversion gives
\[
[z^n]F(z)=\frac1n[u^{n-1}](1+u)^{kn}
 =\frac1n\binom{kn}{n-1}=\FC_{k,n}.
\]
The bijection to a full ordered
$k$-ary tree is built based on the same first-return decomposition. Finally, apply Theorem~\ref{thm:symmetric-reduction}.
\end{proof}

\begin{example}\label{ex:12112-fuss}
For $n=3$ and $k=3$, the word
\[
w=111223233
\]
has non-final skeleton $112233$.  Marking the final $1$, final $2$, and final
$3$ by down-steps gives
\[
UUDUUUDUD,
\]
whose heights are $1,2,0,1,2,3,1,2,0$.
\end{example}

\begin{figure}[H]
\centering
\begin{tikzpicture}[x=0.58cm,y=0.55cm,
  box/.style={draw,minimum width=0.5cm,minimum height=0.45cm,inner sep=0pt,font=\small},
  final/.style={box,fill=gray!28}]
\node at (4,4.5){shaded boxes mark final occurrences};
\node[box] at (0,3.6){1};
\node[box] at (1,3.6){1};
\node[final] at (2,3.6){1};
\node[box] at (3,3.6){2};
\node[box] at (4,3.6){2};
\node[box] at (5,3.6){3};
\node[final] at (6,3.6){2};
\node[box] at (7,3.6){3};
\node[final] at (8,3.6){3};
\draw[thick] (0,0)--(1,1)--(2,2)--(3,0)--(4,1)--(5,2)--(6,3)--(7,1)--(8,2)--(9,0);
\draw[dashed] (-0.1,0)--(9.1,0);
\node[below] at (4.5,-0.2){$UUDUUUDUD$};
\end{tikzpicture}
\caption{The last-occurrence map of Theorem~\ref{thm:12112-fuss} for
$w=111223233$.  Each down-step has vertical displacement $-2$.}
\label{fig:12112-fuss}
\end{figure}

\subsection{Avoidance of \texorpdfstring{$\Orb(1231)$}{Orb(1231)}: A Dyck-decorated matching}

For $w\in\Lat_n^k$, define the span interval
\[
I_i(w)=[p_{i,1}(w),p_{i,k}(w)]
\]
and the \emph{overlap graph} $G(w)$ on vertex set $[n]$ by joining $i<j$ when
\begin{equation}\label{eq:overlap-edge}
p_{j,1}(w)<p_{i,k}(w).
\end{equation}
Since both the first and final occurrence positions increase with the label,
condition \eqref{eq:overlap-edge} is the condition that the two span
intervals overlap.

\begin{lemma}\label{lem:1231-overlap}
A lattice word $w\in\Lat_n^k$ avoids $\Orb(1231)$ if and only if every vertex
of $G(w)$ has degree at most one.  In that case every edge of $G(w)$ joins two
consecutive labels, and therefore $G(w)$ is a matching in the path
$P_n=1-2-\cdots-n$.
\end{lemma}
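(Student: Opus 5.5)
The plan is to show that, for each letter $a$, the number of \emph{other} letters occurring strictly inside the span $I_a(w)$ equals $\deg_{G(w)}(a)$. Once this holds, the orbit condition becomes a condition on degrees.

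\emph{Step 1: translating the orbit.} $\Orb(1231)$ is the set of patterns $abca$ with $a,b,c$ pairwise distinct. So $w$ contains some member of the orbit if and only if some letter $a$ has two distinct other letters occurring strictly between two copies of $a$. For the ``if'' direction, pick one occurrence of each of the two letters inside $(p_{a,1},p_{a,k})$. Whichever occurs first plays the role of $b$, and this gives $a\,b\,c\,a$; swapping the names $b,c$ is harmless because the orbit is closed under relabeling. For the ``only if'' direction, the two copies of $a$ in an occurrence lie in $I_a$, so $b$ and $c$ lie strictly inside $I_a$. Hence $w$ avoids $\Orb(1231)$ if and only if, for every $a$, at most one letter other than $a$ occurs in the open interval $(p_{a,1},p_{a,k})$.

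\emph{Step 2: letters inside a span are exactly the neighbours.} Fix $x\neq a$. If some copy of $x$ lies strictly inside $I_a$, then $I_x$ meets $I_a$. All positions are distinct, and both endpoint sequences increase with the label by Lemma~\ref{lem:ordered-occurrences}, so the intervals overlap in the sense of \eqref{eq:overlap-edge}.

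Conversely, suppose $a$ and $x$ are adjacent in $G(w)$, and split by which label is smaller.
\begin{enumerate}
\item If $a<x$, then $p_{a,1}<p_{x,1}$ by Lemma~\ref{lem:ordered-occurrences}, and $p_{x,1}<p_{a,k}$ by \eqref{eq:overlap-edge}. So the first $x$ is inside $I_a$.
\item If $x<a$, then $p_{a,1}<p_{x,k}$ by \eqref{eq:overlap-edge}, and $p_{x,k}<p_{a,k}$ by Lemma~\ref{lem:ordered-occurrences}. So the last $x$ is inside $I_a$.
\end{enumerate}
Thus the set of other letters inside $I_a$ is exactly the neighbourhood of $a$. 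Combined with Step~1, this proves the first assertion: $w$ avoids the orbit if and only if every vertex has degree at most one.

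\emph{Step 3: edges join consecutive labels.} Suppose $i<j$ is an edge with $j\ge i+2$, and put $m=i+1$. Lemma~\ref{lem:ordered-occurrences} gives $p_{m,1}<p_{j,1}$ and $p_{i,k}<p_{m,k}$. Together with $p_{j,1}<p_{i,k}$, we get two further edges:
\begin{enumerate}
\item $p_{m,1}<p_{j,1}<p_{i,k}$, so $i$ and $m$ are adjacent;
\item $p_{j,1}<p_{i,k}<p_{m,k}$, so $m$ and $j$ are adjacent.
\end{enumerate}
Then $i$ has the two neighbours $m$ and $j$, contradicting degree at most one. So every edge joins consecutive labels, and $G(w)$ is a matching contained in $P_n$.

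The only delicate point is Step~2. There one must use the correct endpoint, first or last occurrence of $x$, depending on the relative order of the labels. Everything else is bookkeeping with Lemma~\ref{lem:ordered-occurrences}.
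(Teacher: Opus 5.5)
Your proof is correct and follows essentially the same route as the paper. Both identify the letters strictly inside the span of $a$ with the neighbours of $a$ in $G(w)$, using the first occurrence for larger labels and the last occurrence for smaller ones, and both rule out an edge $ij$ with $j\ge i+2$ by producing the extra edge $\{i,i+1\}$. The only differences are that your second extra edge $\{i+1,j\}$ is not needed, and that you spell out both directions of the orbit translation, which the paper states only briefly.
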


\begin{proof}
An occurrence $a\,b\,c\,a$ with $a,b,c$ pairwise distinct exists
when two distinct labels occur strictly between the first and final copies of
$a$.  A label $j>a$ occurs in that interval if and only if its first occurrence does, which is equivalent to $p_{j,1}<p_{a,k}$.  A label $j<a$ occurs there if
and only if its final occurrence does, which is equivalent to
$p_{a,1}<p_{j,k}$.  These are exactly the neighbors of $a$ in $G(w)$. Hence
avoidance is equivalent to maximum degree at most one.

Suppose $i<j$ with $j\geq i+2$ and $ij$ is an edge.  Then
\[
p_{i+1,1}<p_{j,1}<p_{i,k},
\]
so $i(i+1)$ is also an edge.  The vertex $i$ would have degree at least two,
contrary to avoidance. Thus every edge joins consecutive labels, and a graph
of maximum degree one with this property is a matching in $P_n$.
\end{proof}

Let
\[
\Dyck_k^\star=\Dyck_k\setminus\{U^kD^k\}
\]
be the non-mountain Dyck paths, equivalently the binary lattice words in which
the two span intervals overlap.

\begin{theorem}\label{thm:1231-matching}
For $n,k\geq1$, there is a canonical bijection
\begin{equation}\label{eq:1231-bijection}
\Lat_n^k(\Orb(1231))
\longleftrightarrow
\bigsqcup_{M\in\Match(P_n)}
\prod_{\{i,i+1\}\in M}\Dyck_k^\star.
\end{equation}
An unmatched vertex $i$ contributes the singleton block $i^k$, while a matched
edge $\{i,i+1\}$ contributes a nontrivial binary lattice word on
$\{i,i+1\}$.  These blocks occur from left to right in increasing label order,
and the decomposition is unique.
\end{theorem}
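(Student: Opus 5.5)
The plan is to build the bijection from Lemma~\ref{lem:1231-overlap} and then check that the inverse construction really produces avoiding lattice words. Let $w\in\Lat_n^k(\Orb(1231))$ and let $M=G(w)$, a matching in $P_n$. First I will show that the word splits into consecutive blocks. For an unmatched vertex $i$, the span $I_i$ overlaps no other span. Because first and last occurrence positions increase with the label, every label $j<i$ then finishes before $p_{i,1}$, and every label $j>i$ starts after $p_{i,k}$. So the $k$ copies of $i$ are contiguous, and all smaller labels lie to the left and all larger labels to the right.

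For a matched edge $\{i,i+1\}$, the union $I_i\cup I_{i+1}=[p_{i,1},p_{i+1,k}]$ contains no occurrence of any other label $j$. Otherwise $I_j$ would meet $I_i$ or $I_{i+1}$. Take $j<i$ for concreteness: then $I_j$ ends before $I_{i+1}$ ends, and it must overlap $I_i$, since reaching into $I_{i+1}$ before $p_{i+1,1}$ would already lie in $I_i$, and otherwise $p_{j,k}>p_{i+1,1}>p_{i,1}$. That gives $i$ degree two, a contradiction. The case $j>i+1$ is symmetric. Hence the word is a concatenation, in increasing label order, of blocks $i^k$ and binary words on $\{i,i+1\}$. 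Each binary block is a lattice word by Lemma~\ref{lem:ordered-occurrences}, and it is not $i^k(i+1)^k$ because the two spans overlap. So it lies in $\Dyck_k^\star$. This defines the forward map, and it is injective because $w$ is recovered by concatenating the blocks.

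For the inverse, take a matching $M$ and a choice of a non-mountain Dyck word for each edge, translated to letters $\{i,i+1\}$. Concatenate the blocks in increasing label order. The result is $k$-regular, and the adjacent-pair conditions of Lemma~\ref{lem:ordered-occurrences} hold in both situations: within a binary block by ballotness, and across blocks because all copies of $i$ precede all copies of $i+1$. So the word is a lattice word. Its overlap graph is exactly $M$, since spans of letters in different blocks are disjoint and the two spans inside a non-mountain block overlap. By Lemma~\ref{lem:1231-overlap}, degree at most one gives avoidance.

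Finally, $G(w)=M$ recovers the matching and the blocks recover the Dyck words, so the two maps are mutually inverse and the decomposition is unique. I expect the main obstacle to be the block-contiguity step for matched edges, namely the careful argument that no third label intrudes into $I_i\cup I_{i+1}$. Everything else is bookkeeping with Lemma~\ref{lem:ordered-occurrences}.
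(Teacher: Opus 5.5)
Your proposal is correct and follows essentially the same route as the paper. That route is: $G(w)$ is a matching in $P_n$ by Lemma~\ref{lem:1231-overlap}; unmatched letters form contiguous blocks $i^k$; each matched pair fills $[p_{i,1},p_{i+1,k}]$ with a non-mountain binary lattice word; and the inverse concatenation is checked via Lemma~\ref{lem:ordered-occurrences} and the degree criterion. You spell out the no-intrusion step for matched pairs more explicitly than the paper, and you also verify that the overlap graph of the concatenated word is exactly $M$. One clause in the no-intrusion step (``reaching into $I_{i+1}$ before $p_{i+1,1}$'') is garbled but unnecessary: $p_{j,1}<p_{i,1}$ together with $I_j\cap[p_{i,1},p_{i+1,k}]\neq\varnothing$ already forces $p_{j,k}>p_{i,1}$, so $j$ is adjacent to $i$ in $G(w)$.
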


\begin{proof}
Let $w$ avoid $\Orb(1231)$.  By Lemma~\ref{lem:1231-overlap}, $G(w)$ is amatching in $P_n$.  If $i$ is unmatched, then no other span intersects $I_i$ and no other letter occurs between the first and final copies of $i$,
so these copies form the contiguous block $i^k$.

If $\{i,i+1\}$ is a matched edge, then the union of $I_i$ and $I_{i+1}$
contains no third span: a third intersecting span would give degree at least
two to one of the matched vertices. Therefore the segment from the first
$i$ to the final $i+1$ is a contiguous binary lattice word on
$\{i,i+1\}$.  The intervals overlap, so this binary word is not
$i^k(i+1)^k$ and corresponds to an element of $\Dyck_k^\star$.  Since first
and final occurrence positions are ordered by the labels, these singleton and
paired blocks appear in increasing order.  This constructs the right-hand
side of \eqref{eq:1231-bijection} uniquely.

For the other direction, choose a matching $M$ and concatenate the indicated blocks in
increasing label order.  Within a paired block the binary ballot condition
gives the required adjacent prefix inequality; all earlier blocks are already
complete and all later blocks have not begun. Thus the concatenation lies in
$\Lat_n^k$.  Each letter has at most its matched partner inside its span, so
Lemma~\ref{lem:1231-overlap} implies avoidance.  The constructions are inverses of each other.
\end{proof}

\begin{corollary}\label{cor:1231-recurrence}
Let
\[
a_{n,k}=L_{n,k}(\Orb(1231)),
\qquad a_{0,k}=a_{1,k}=1.
\]
Then
\begin{equation}\label{eq:1231-recurrence}
a_{n,k}=a_{n-1,k}+(C_k-1)a_{n-2,k}
\qquad(n\geq2),
\end{equation}
\begin{equation}\label{eq:1231-sum}
a_{n,k}=
\sum_{j=0}^{\lfloor n/2\rfloor}
\binom{n-j}{j}(C_k-1)^j,
\end{equation}
and
\begin{equation}\label{eq:1231-ogf}
\sum_{n\geq0}a_{n,k}z^n
 =\frac{1}{1-z-(C_k-1)z^2}.
\end{equation}
Therefore
\[
c_n^k(\Orb(1231))=n!a_{n,k}.
\]
\end{corollary}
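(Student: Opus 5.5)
The plan is to read everything off the canonical bijection of Theorem~\ref{thm:1231-matching}. Since $\card{\Dyck_k^\star}=C_k-1$ (we remove only the mountain $U^kD^k$ from $\Dyck_k$), the right-hand side of \eqref{eq:1231-bijection} has cardinality
\[
a_{n,k}=\sum_{M\in\Match(P_n)}(C_k-1)^{\card{M}}.
\]
Thus $a_{n,k}$ is the matching polynomial of the path $P_n$, weighted by $C_k-1$ per edge. For $n=0$ and $n=1$ the only matching is empty, which gives the boundary values $a_{0,k}=a_{1,k}=1$. These agree with the direct lattice-word counts: the empty word, and the single word $1^k$.

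For the recurrence \eqref{eq:1231-recurrence}, we split the matchings of $P_n$ ($n\ge2$) according to the status of vertex $n$. If $n$ is unmatched, deleting it leaves an arbitrary matching of $P_{n-1}$ with the same weight. If $n$ is matched, its partner must be $n-1$; deleting the edge $\{n-1,n\}$ leaves a matching of $P_{n-2}$ and removes one factor $C_k-1$. In the language of words, the final block is either the singleton $n^k$ or a paired block on $\{n-1,n\}$, and removing it leaves a word of the same type with fewer letters.

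For the closed form \eqref{eq:1231-sum}, we count matchings of $P_n$ with $j$ edges directly. Contract each matched edge to a single token: a matching with $j$ edges then corresponds to an arrangement of $j$ edge-tokens and $n-2j$ vertex-tokens in a row. There are $\binom{n-j}{j}$ such arrangements. Multiplying by $(C_k-1)^j$ and summing over $0\le j\le\lfloor n/2\rfloor$ gives \eqref{eq:1231-sum}. Alternatively, one can check that this sum satisfies the same recurrence and initial values, via Pascal's rule $\binom{n-j}{j}=\binom{n-1-j}{j}+\binom{n-1-j}{j-1}$.

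For the generating function \eqref{eq:1231-ogf}, multiply the recurrence by $z^n$ and sum over $n\ge2$. Writing $A(z)=\sum a_{n,k}z^n$, this gives
\[
A-1-z=z(A-1)+(C_k-1)z^2A,
\]
so $A(1-z-(C_k-1)z^2)=1$. Finally, $c_n^k(\Orb(1231))=n!\,a_{n,k}$ follows from Theorem~\ref{thm:symmetric-reduction}, since $\Orb(1231)$ is symmetric.

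None of these steps is hard. The only point that needs care is the boundary conventions: $a_{0,k}=1$ has to be consistent with the bijection and the recurrence, and at $k=1$ we have $C_1-1=0$, so $a_{n,1}=1$, as it should be.
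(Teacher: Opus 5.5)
Your proposal is correct and follows the paper's proof: it reads the count off the Dyck-decorated matching bijection of Theorem~\ref{thm:1231-matching} with weight $C_k-1$ per edge, and splits on whether vertex $n$ is unmatched or matched to $n-1$. It then counts $j$-edge matchings of $P_n$ by $\binom{n-j}{j}$ and invokes Theorem~\ref{thm:symmetric-reduction} for the factor $n!$. The extra details you supply (the token-contraction count, the explicit generating-function algebra, and the boundary checks at $n=0,1$ and $k=1$) are accurate and fill in steps the paper leaves implicit.
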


\begin{proof}
Partition the decorated matchings according to whether the final vertex $n$ is
unmatched or is matched to $n-1$.  The first case leaves a decorated
matching of $P_{n-1}$; the second leaves one of $P_{n-2}$ and has
$\card{\Dyck_k^\star}=C_k-1$ choices for the final edge.  This gives
\eqref{eq:1231-recurrence}.  A matching of size $j$ in $P_n$ can be chosen in
$\binom{n-j}{j}$ ways, which gives \eqref{eq:1231-sum}.  The recurrence gives
\eqref{eq:1231-ogf}, and the canon count follows from symmetric reduction lemma.
\end{proof}

\begin{example}\label{ex:1231-matching}
For $n=5$ and $k=3$, take the matching
$M=\{\{1,2\},\{4,5\}\}$ and decorate both edges by the Dyck word $UUDUDD$.
The corresponding lattice word is
\[
112122\mid333\mid445455.
\]
The middle vertex is a monomer, and the two matched edges are nontrivial
Dyck-decorated dimers.
\end{example}

\begin{figure}[H]
\centering
\begin{tikzpicture}[x=1.25cm,y=0.9cm,
 v/.style={circle,draw,minimum size=6mm,inner sep=0pt}]
\foreach \i in {1,...,5}{\node[v] (v\i) at (\i,1.8){\i};}
\foreach \i/\j in {1/2,2/3,3/4,4/5}{\draw[gray!60] (v\i)--(v\j);}
\draw[very thick,blue!65!black,bend left=35] (v1) to (v2);
\draw[very thick,blue!65!black,bend left=35] (v4) to (v5);
\node[draw,rounded corners,fit=(v1)(v2),inner sep=5pt,label=below:{\small $112122$}] {};
\node[draw,rounded corners,fit=(v3),inner sep=5pt,label=below:{\small $333$}] {};
\node[draw,rounded corners,fit=(v4)(v5),inner sep=5pt,label=below:{\small $445455$}] {};
\end{tikzpicture}
\caption{A Dyck-decorated matching for the word in
Example~\ref{ex:1231-matching}.}
\label{fig:1231-matching}
\end{figure}

\section{Conjectures and questions}
\label{sec:questions}

All conjectures and questions in this section are open.

\subsection[The sequence Bk]{The sequence \(B_k\)}

Computational evidence suggests the following
real-rootedness conjecture.

\begin{conjecture}
\label{conj:B-real-rooted}
For every \(k\geq1\), the polynomial \(\Bsf_k(x)\) is real-rooted.  
\end{conjecture}

\subsection[The sequence Qk]{The sequence \(Q_k\)}

\begin{conjecture}
\label{conj:Q-nonalgebraic}
The D-finite series \(Q(z)\) is transcendental over \(\Q(z)\).
\end{conjecture}

\begin{conjecture}
\label{conj:Q-logconcave}
For every \(k\geq1\), the coefficients of \(\Qsf_k(x)\) are log-concave and
therefore unimodal.
\end{conjecture}

\begin{question}
\label{ques:Q-structural-model}
Is there a finite-class generating tree, or a model by a fixed number of
classical path or tree objects with local compatibility conditions, whose
natural parameters recover $(b,c,r)$ in Theorem~\ref{thm:Q-multisum}?
\end{question}

\subsection{Continued fractions and moment sequences}

The Narayana component supplies the benchmark
\begin{equation}\label{eq:narayana-sfraction}
\sum_{k\geq0}N_k(x)z^k
 =\cfrac{1}{1-\cfrac{z}{1-\cfrac{xz}{1-\cfrac{z}{1-\cfrac{xz}{\ddots}}}}},
\end{equation}
the classical alternating Stieltjes fraction \cite{Flajolet}.
\begin{remark}[Hankel minors]
For each of the sequences $(B_k)$ and $(Q_k)$, all $3431$ minors of the leading
$7\times7$ Hankel matrix $(a_{i+j})_{0\leq i,j\leq6}$ are positive in exact
arithmetic.
\end{remark}

\begin{conjecture}
\label{conj:moment-property}
Both $(B_k)_{k\geq0}$ and $(Q_k)_{k\geq0}$ are Stieltjes moment sequences;
equivalently, their infinite Hankel matrices are totally nonnegative.
\end{conjecture}

\subsection{Beyond three distinct letters}

\begin{question}
\label{ques:beyond-three}
Which parts of the relabeling and class-reduction mechanism survive for fixed
alphabet size \(n=4\)?  A first target might be to classify those forbidden sets
whose lattice-word components reduce to forced blocks and binary or ternary
Catalan objects, before attempting the complete classification of the \(2^{24}\) forbidden subsets of $\Sym_4$.
\end{question}

\section*{Acknowledgments}
\paragraph{A.I. usage declaration}
Much of the Python and SageMath code that was used in this project was generated by OpenAI's ChatGPT and Google's Gemini. We have also used these tools to improve the writing in many places, and we have also followed the manuscript organisation suggested by Gemini. ChatGPT suggested adding additional mathematical details in some places which we have done. ChatGPT was also used to generate the figures in the article. The mathematical ideas in the paper are the author's own.

\end{document}